\providecommand{\tabularnewline}{\\}
\def\RSthmtxt{theorem~}\newref{thm}{name = \RSthmtxt}}
\def\RSlemtxt{lemma~}\newref{lem}{name = \RSlemtxt}}
\numberwithin{equation}{section}
\numberwithin{figure}{section}
 \theoremstyle{definition}
 \newtheorem*{defn*}{\protect\definitionname}
\theoremstyle{plain}
\newtheorem{thm}{\protect\theoremname}[section]
  \theoremstyle{plain}
  \newtheorem{conjecture}[thm]{\protect\conjecturename}
  \theoremstyle{plain}
  \newtheorem{lem}[thm]{\protect\lemmaname}
  \theoremstyle{plain}
  \newtheorem{prop}[thm]{\protect\propositionname}
  \theoremstyle{remark}
  \newtheorem*{rem*}{\protect\remarkname}
  \providecommand{\conjecturename}{Conjecture}
  \providecommand{\definitionname}{Definition}
  \providecommand{\lemmaname}{Lemma}
  \providecommand{\propositionname}{Proposition}
  \providecommand{\remarkname}{Remark}
\providecommand{\theoremname}{Theorem}
\begin{document}

\title[Non-cooperative KPP systems]{Non-cooperative Fisher\textendash KPP systems: asymptotic behavior
of traveling waves}

\author{Léo Girardin}

\thanks{This work has been carried out in the framework of the NONLOCAL project
(ANR-14-CE25-0013) funded by the French National Research Agency (ANR).\\
Laboratoire Jacques-Louis Lions, CNRS UMR 7598, Université Pierre
et Marie Curie, 4 place Jussieu, 75005 Paris, France}

\email{girardin@ljll.math.upmc.fr}
\begin{abstract}
This paper is concerned with non-cooperative parabolic reaction\textendash diffusion
systems which share structural similarities with the scalar Fisher\textendash KPP
equation. In a previous paper, we established that these systems admit
traveling wave solutions whose profiles connect the null state to
a compact subset of the positive cone. The main object of the present
paper is the investigation of a more precise description of these
profiles. Non-cooperative KPP systems can model various phenomena
where the following three mechanisms occur: local diffusion in space,
linear cooperation and superlinear competition.
\end{abstract}

\keywords{KPP nonlinearities, reaction\textendash diffusion system, steady
states, structured population, traveling waves.}

\subjclass[2000]{34D23, 35K40, 35K57, 92D25.}
\maketitle

\section{Introduction}

This paper is a sequel to a previous paper by the same author \cite{Girardin_2016_2}
where the so-called \textit{KPP systems} were investigated. The prototypical
and, arguably, most famous KPP system is the \textit{Lotka\textendash Volterra
mutation\textendash competition\textendash diffusion system}:
\[
\frac{\partial\mathbf{u}}{\partial t}-\text{diag}\left(\mathbf{d}\right)\Delta_{x}\mathbf{u}=\text{diag}\left(\mathbf{r}\right)\mathbf{u}+\mathbf{M}\mathbf{u}-\text{diag}\left(\mathbf{u}\right)\mathbf{C}\mathbf{u},
\]
where $\mathbf{u}$ is a nonnegative vector containing phenotypical
densities, $\mathbf{d}$ and $\mathbf{r}$ are positive vectors containing
respectively diffusion rates and growth rates, $\mathbf{M}$ is an
essentially nonnegative irreducible matrix with null Perron\textendash Frobenius
eigenvalue containing mutation rates (typically a discrete Neumann
Laplacian) and $\mathbf{C}$ is a positive matrix containing competition
rates. Although the Lotka\textendash Volterra competition\textendash diffusion
system (without mutations) is a very classical research subject, mutations
can dramatically influence some of its properties and their overall
effect is still poorly understood. 

More generally, KPP systems as defined in \cite{Girardin_2016_2}
are non-cooperative (or non-monotone, i.e. they do not satisfy a comparison
principle; see Protter\textendash Weinberger \cite[Chapter 3, Section 8]{Protter_Weinberger})
and have started to attract attention relatively recently. Their study
requires innovative ideas and the literature is limited; a detailed
bibliography can be found in \cite{Girardin_2016_2}.

By adapting proofs and methods well-known in the context of the scalar
KPP equation,
\[
\frac{\partial u}{\partial t}-d\Delta_{x}u=ru-cu^{2},
\]
 first studied by Fisher \cite{Fisher_1937} and Kolmogorov, Petrovsky
and Piskunov \cite{KPP_1937}, various properties of these systems
were established in \cite{Girardin_2016_2}. In particular, a KPP
system equipped with a reaction term sufficiently analogous to $u-u^{2}$
admits traveling wave solutions with a half-line of possible speeds
and a positive minimal speed $c^{\star}$. These traveling waves are
defined in a very general way: it is merely required that they describe
the invasion of $\mathbf{0}$ by a positive population density. A
very natural subsequent question is that of the evolution of the distribution
$\mathbf{u}$ during the invasion. Which components lead the invasion?
Which components settle once the invasion is over?

Having in mind that the waves traveling at speed $c^{\star}$ should
attract front-like and compactly supported initial data (although
this statement has yet to be proven, since \cite{Girardin_2016_2}
only established the equality between $c^{\star}$ and the spreading
speed associated with such initial data, and it is expected to be
a very difficult problem), a more general question is then: given
a class of initial data, what is the long-time distribution of the
solution? 

In the rest of the introduction, we present more precisely the problem
and state our main results. Sections 2, 3 and 4 are dedicated to the
proofs of these results. Finally, open questions, interesting remarks
and numerical experiments are discussed in Section 5. 

\subsection{The non-cooperative KPP system}

From now on, an integer $N\geq2$ is fixed. 

A positive vector $\mathbf{d}\in\mathsf{K}^{++}$, a square matrix
$\mathbf{L}\in\mathsf{M}$ and a vector field $\mathbf{c}\in\mathscr{C}^{1}\left(\mathbb{R}^{N},\mathbb{R}^{N}\right)$
are fixed. We denote for the sake of brevity $\mathbf{D}=\text{diag}\left(\mathbf{d}\right)$.
\begin{table}[b]
{\footnotesize{}}%
\begin{tabular}{|c|c|}
\hline 
{\footnotesize{}Notation} & {\footnotesize{}Definition}\tabularnewline
\hline 
\hline 
{\footnotesize{}$\left[n\right]$} & {\footnotesize{}$\left[1,n\right]\cap\mathbb{N}$}\tabularnewline
\hline 
{\footnotesize{} $\left(\mathbf{e}_{n,i}\right)_{i\in\left[n\right]}$} & {\footnotesize{}canonical basis of $\mathbb{R}^{n}$}\tabularnewline
\hline 
{\footnotesize{}$\left|\bullet\right|_{n}$} & {\footnotesize{}Euclidean norm of $\mathbb{R}^{n}$}\tabularnewline
\hline 
{\footnotesize{} $\mathsf{B}_{n}\left(\mathbf{v},r\right)$, $\mathsf{S}_{n}\left(\mathbf{v},r\right)$ } & {\footnotesize{}open ball and sphere of center $\mathbf{v}\in\mathbb{R}^{n}$
and radius $r>0$}\tabularnewline
\hline 
{\footnotesize{}$\geq_{n}$, $>_{n}$, $\gg_{n}$} & {\footnotesize{}$v_{i}\geq\hat{v}_{i}$ for all $i\in\left[n\right]$,
$\mathbf{v}\geq_{n}\hat{\mathbf{v}}$ and $\mathbf{v}\neq\hat{\mathbf{v}}$,
$v_{i}>\hat{v}_{i}$ for all $i\in\left[n\right]$}\tabularnewline
\hline 
{\footnotesize{}nonnegative, nonneg. nonzero, positive $\mathbf{v}\in\mathbb{R}^{n}$} & {\footnotesize{}$\mathbf{v}\geq_{n}\mathbf{0}$, $\mathbf{v}>_{n}\mathbf{0}$,
$\mathbf{v}\gg_{n}\mathbf{0}$}\tabularnewline
\hline 
{\footnotesize{}$\mathsf{K}_{n}$, $\mathsf{K}_{n}^{+}$, $\mathsf{K}_{n}^{++}$} & {\footnotesize{}sets of all nonnegative, nonneg. nonzero, positive
vectors}\tabularnewline
\hline 
{\footnotesize{}$\mathsf{S}_{n}^{+}\left(\mathbf{0},1\right)$, $\mathsf{S}_{n}^{++}\left(\mathbf{0},1\right)$} & {\footnotesize{}$\mathsf{K}_{n}^{+}\cap\mathsf{S}_{n}\left(\mathbf{0},1\right)$,
$\mathsf{K}_{n}^{++}\cap\mathsf{S}_{n}\left(\mathbf{0},1\right)$ }\tabularnewline
\hline 
{\footnotesize{}$\mathsf{M}_{n,n'}$, $\mathsf{M}_{n}$} & {\footnotesize{}sets of all real matrices of dimension $n\times n'$,
$n\times n$ }\tabularnewline
\hline 
{\footnotesize{}$\mathbf{I}_{n}$, $\mathbf{1}_{n,n'}$} & {\footnotesize{}identity matrix, matrix whose every entry is equal
to $1$ }\tabularnewline
\hline 
{\footnotesize{}$\text{diag}\left(\mathbf{v}\right)$} & {\footnotesize{}diagonal matrix whose $i$-th diagonal entry is $v_{i}$}\tabularnewline
\hline 
{\footnotesize{}essentially nonnegative matrix} & {\footnotesize{}matrix $\mathbf{A}$ such that $\mathbf{A}-\min\limits _{i\in\left[n\right]}\left(a_{i,i}\right)\mathbf{I}_{n}$
is nonnegative}\tabularnewline
\hline 
{\footnotesize{}$\mathbf{A}\circ\mathbf{B}$} & {\footnotesize{}Hadamard (entry-by-entry) product $\left(a_{i,j}b_{i,j}\right)_{\left(i,j\right)\in\left[n\right]\times\left[n'\right]}$}\tabularnewline
\hline 
{\footnotesize{}$\mathbf{f}\left[\hat{\mathbf{f}}\right]$} & {\footnotesize{}composition of the functions $\mathbf{f}$ and $\hat{\mathbf{f}}$}\tabularnewline
\hline 
\end{tabular}{\footnotesize \par}

\caption{General notations ( the subscripts depending only on $1$ or $N$
are omitted when the context is unambiguous)}
\end{table}

We consider the following semilinear parabolic system:
\[
\partial_{t}\mathbf{u}-\mathbf{D}\partial_{xx}\mathbf{u}=\mathbf{L}\mathbf{u}-\mathbf{c}\left[\mathbf{u}\right]\circ\mathbf{u},\quad\left(E_{KPP}\right)
\]
with $\mathbf{u}:\left(t,x\right)\in\mathbb{R}^{2}\mapsto\mathbf{u}\left(t,x\right)\in\mathbb{R}^{N}$
as unknown. In order to ease the notations, we only consider one-dimensional
spaces, however all forthcoming results could be applied directly
to traveling plane waves in multidimensional spaces (these solutions
being in fact one-dimensional).

When restricted to solutions $\mathbf{u}:\mathbb{R}\to\mathbb{R}^{N}$
which are constant in space, $\left(E_{KPP}\right)$ reduces to

\[
\mathbf{u}'=\mathbf{L}\mathbf{u}-\mathbf{c}\left[\mathbf{u}\right]\circ\mathbf{u}.\quad\left(E_{KPP}^{0}\right)
\]

When restricted to solutions $\mathbf{u}:\mathbb{R}\to\mathbb{R}^{N}$
which are constant in time, $\left(E_{KPP}\right)$ reduces to
\[
-\mathbf{D}\mathbf{u}''=\mathbf{L}\mathbf{u}-\mathbf{c}\left[\mathbf{u}\right]\circ\mathbf{u}.\quad\left(S_{KPP}\right)
\]

When restricted to traveling solutions of the form $\mathbf{u}:\left(t,x\right)\mapsto\mathbf{p}\left(x-ct\right)$
with $c\in\mathbb{R}$, $\left(E_{KPP}\right)$ reduces to

\[
-\mathbf{D}\mathbf{p}''-c\mathbf{p}'=\mathbf{L}\mathbf{p}-\mathbf{c}\left[\mathbf{p}\right]\circ\mathbf{p}.\quad\left(TW\left[c\right]\right)
\]

\subsubsection{Basic KPP assumptions}

The basic assumptions introduced in \cite{Girardin_2016_2} are the
following ones.

{\renewcommand\labelenumi{(${H}_\theenumi$)}
\begin{enumerate}
\item $\mathbf{L}$ is essentially nonnegative and irreducible. 
\item $\mathbf{c}\left(\mathsf{K}\right)\subset\mathsf{K}$.
\item $\mathbf{c}\left(\mathbf{0}\right)=\mathbf{0}$.
\item There exists
\[
\left(\underline{\alpha},\delta,\underline{\mathbf{c}}\right)\in[1,+\infty)^{2}\times\mathsf{K}^{++}
\]
such that
\[
\sum_{j=1}^{N}l_{i,j}n_{j}\geq0\implies\alpha^{\delta}\underline{c}_{i}\leq c_{i}\left(\alpha\mathbf{n}\right)
\]
for all
\[
\left(\mathbf{n},\alpha,i\right)\in\mathsf{S}^{+}\left(\mathbf{0},1\right)\times[\underline{\alpha},+\infty)\times\left[N\right].
\]
\end{enumerate}
The assumption $\left(H_{4}\right)$ loosely means that $\mathbf{c}$
grows at least linearly at infinity. The precise condition means,
however, that in the set $\left\{ \mathbf{v}\in\mathsf{K}\ |\ \left(\mathbf{L}\mathbf{v}\right)_{i}<0\right\} $
(which is nonempty if and only if $l_{i,i}<0$ and contains in such
a case the open half-line $\text{span}\left(\mathbf{e}_{i}\right)\cap\mathsf{K}^{+}$),
the growth of $c_{i}$ is not important. Anyway, $\left(H_{4}\right)$
includes the Lotka\textendash Volterra form of competition (linear
and positive $\mathbf{c}$) as well as more general forms (see for
instance Gilpin\textendash Ayala \cite{Gilpin_Ayala}). 

Recall from the Perron\textendash Frobenius theorem that if $\mathbf{L}$
is nonnegative and irreducible, its spectral radius $\rho\left(\mathbf{L}\right)$
is also its dominant eigenvalue, called the \textit{Perron\textendash Frobenius
eigenvalue} $\lambda_{PF}\left(\mathbf{L}\right)$, and is the unique
eigenvalue associated with a positive eigenvector. Recall also that
if $\mathbf{L}$ is essentially nonnegative and irreducible, the Perron\textendash Frobenius
theorem can still be applied. In such a case, the unique eigenvalue
of $\mathbf{L}$ associated with a positive eigenvector is $\lambda_{PF}\left(\mathbf{L}\right)=\rho\left(\mathbf{L}-\min\limits _{i\in\left[N\right]}\left(l_{i,i}\right)\mathbf{I}_{N}\right)+\min\limits _{i\in\left[N\right]}\left(l_{i,i}\right)$.
Any eigenvector associated with $\lambda_{PF}\left(\mathbf{L}\right)$
is referred to as a \textit{Perron\textendash Frobenius eigenvector}
and the unit one is denoted $\mathbf{n}_{PF}\left(\mathbf{L}\right)$.

In view of \cite[Theorems 1.3, 1.4, 1.5]{Girardin_2016_2}, in order
to study traveling waves and non-trivial long-time behavior, the following
assumption is also necessary. 
\begin{enumerate}[resume]
\item $\lambda_{PF}\left(\mathbf{L}\right)>0$.
\end{enumerate}
The collection $\left(H_{1}\right)$\textendash $\left(H_{5}\right)$
is always assumed from now on. Notice that, although this does not
bring any new result, the scalar KPP equation could be seen as a particular
KPP system (understanding the pair $\left(H_{1}\right)$ and $\left(H_{5}\right)$
as $r>0$). Biological interpretations of these assumptions can be
found in \cite[Section 1.5]{Girardin_2016_2}.

\subsubsection{Traveling waves}

Traveling waves are defined in \cite{Girardin_2016_2} as follows. 
\begin{defn*}
A \textit{traveling wave solution} of $\left(E_{KPP}\right)$ is a
\textit{profile\textendash speed} pair
\[
\left(\mathbf{p},c\right)\in\mathscr{C}^{2}\left(\mathbb{R},\mathbb{R}^{N}\right)\times[0,+\infty)
\]
 which satisfies:
\begin{itemize}
\item $\mathbf{u}:\left(t,x\right)\mapsto\mathbf{p}\left(x-ct\right)$ is
a bounded positive classical solution of $\left(E_{KPP}\right)$;
\item $\left(\liminf\limits _{\xi\to-\infty}p_{i}\left(\xi\right)\right)_{i\in\left[N\right]}>\mathbf{0}$;
\item $\lim\limits _{\xi\to+\infty}\mathbf{p}\left(\xi\right)=\mathbf{0}$.
\end{itemize}
\end{defn*}
By construction, a traveling wave solution $\left(\mathbf{p},c\right)$
solves $\left(TW\left[c\right]\right)$.

The set of all profiles associated with some speed $c$ is denoted
$\mathscr{P}_{c}$. By \cite[Theorems 1.5, 1.7]{Girardin_2016_2},
$\mathscr{P}_{c}$ is empty if
\[
c<c^{\star}=\min_{\mu>0}\frac{\lambda_{PF}\left(\mu^{2}\mathbf{D}+\mathbf{L}\right)}{\mu}.
\]
The converse statement (existence of a profile if $c\geq c^{\star}$)
is likely false in general but is true provided $\mathbf{c}$ is monotonic
in the following sense:
\[
D\mathbf{c}\left(\mathbf{v}\right)\geq\mathbf{0}\text{ for all }\mathbf{v}\in\mathsf{K}.
\]

\subsection{Results: at the edge of the fronts}

The distribution of the profiles near $+\infty$ follows the \textquotedblleft rule
of thumb\textquotedblright{} unfolded in \cite{Girardin_2016_2}:
for several standard problems, KPP systems can be addressed exactly
as KPP equations and the results are analogous.

Recall from \cite[Lemma 6.2]{Girardin_2016_2} the notation $\mathbf{n}_{\mu}=\mathbf{n}_{PF}\left(\mu^{2}\mathbf{D}+\mathbf{L}\right)$
for all $\mu\in\mathbb{R}$. Recall also that the equation 
\[
\frac{\lambda_{PF}\left(\mu^{2}\mathbf{D}+\mathbf{L}\right)}{\mu}=c
\]
 admits no real solution if $c<c^{\star}$, exactly one real solution
$\mu_{c^{\star}}>0$ if $c=c^{\star}$ and exactly two real solutions
$\mu_{2,c}>\mu_{1,c}>0$ if $c>c^{\star}$. Define subsequently for
all $c\geq c^{\star}$ the quantity
\[
\mu_{c}=\min\left\{ \mu>0\ |\ \frac{\lambda_{PF}\left(\mu^{2}\mathbf{D}+\mathbf{L}\right)}{\mu}=c\right\} =\left\{ \begin{matrix}\mu_{c^{\star}} & \text{ if }c=c^{\star},\\
\mu_{1,c} & \text{ if }c>c^{\star}.
\end{matrix}\right.
\]
\begin{thm}
\label{thm:Distribution_edge} Let 
\[
k_{c}=\left\{ \begin{matrix}0 & \text{if }c>c^{\star},\\
1 & \text{if }c=c^{\star}.
\end{matrix}\right.
\]

For all traveling wave solutions $\left(\mathbf{p},c\right)$, there
exists $A>0$ such that, as $\xi\to+\infty$,
\[
\left\{ \begin{matrix}\mathbf{p}\left(\xi\right)\sim A\xi^{k_{c}}\text{e}^{-\mu_{c}\xi}\mathbf{n}_{\mu_{c}},\\
\mathbf{p}'\left(\xi\right)\sim-\mu_{c}\mathbf{p}\left(\xi\right),\\
\mathbf{p}''\left(\xi\right)\sim\mu_{c}^{2}\mathbf{p}\left(\xi\right).
\end{matrix}\right.
\]

In particular, if $\mathbf{d}=\mathbf{1}_{N,1}$, 
\[
\mathbf{p}\left(\xi\right)\sim A\xi^{k_{c}}\text{e}^{-\frac{1}{2}\left(c-\sqrt{c^{2}-4\lambda_{PF}\left(\mathbf{L}\right)}\right)\xi}\mathbf{n}_{PF}\left(\mathbf{L}\right).
\]
\end{thm}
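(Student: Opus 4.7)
The plan is to adapt the classical Ikehara--Tauberian argument used for the scalar Fisher--KPP equation to the vectorial setting, exploiting the Perron--Frobenius structure already developed in the paper. First, I would derive \emph{a priori} exponential upper bounds on $\mathbf{p}$ near $+\infty$. Since $\mathbf{c}[\mathbf{p}]\circ \mathbf{p} \geq \mathbf{0}$, the profile is a componentwise subsolution of the linearized system $-\mathbf{D}\mathbf{q}'' - c\mathbf{q}' = \mathbf{L}\mathbf{q}$. Exponential supersolutions of the form $\xi \mapsto A e^{-\mu_c \xi}\mathbf{n}_{\mu_c}$ (with a linear prefactor $\xi$ in the critical case $c = c^{\star}$) are furnished by the characteristic relation $\lambda_{PF}(\mu_c^2 \mathbf{D} + \mathbf{L}) = c\mu_c$; comparison on a large half-line $[R,+\infty)$, made possible by the strict positivity of $\mathbf{n}_{\mu_c}$ and by $(H_1)$, yields a pointwise bound of the form $\mathbf{p}(\xi) \leq C(1 + \xi^{k_c}) e^{-\mu_c \xi}\mathbf{n}_{\mu_c}$.

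With this upper bound in hand, I would introduce the componentwise Laplace transform $\tilde{\mathbf{p}}(z) = \int_R^{+\infty} \mathbf{p}(\xi) e^{z\xi}\,d\xi$, which is holomorphic on the strip $\{\mathrm{Re}\,z < \mu_c\}$. Multiplying $(TW[c])$ by $e^{z\xi}$ and integrating by parts yields
\[
(z^2 \mathbf{D} - cz\,\mathbf{I} + \mathbf{L})\tilde{\mathbf{p}}(z) = \mathbf{B}(z) + \widetilde{\mathbf{c}[\mathbf{p}]\circ \mathbf{p}}(z),
\]
where $\mathbf{B}(z)$ encodes the boundary terms at $\xi = R$ and is entire. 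Because $\mathbf{c}[\mathbf{p}]\circ\mathbf{p}$ decays like $e^{-2\mu_c \xi}$ at infinity, its Laplace transform is holomorphic on $\{\mathrm{Re}\,z < 2\mu_c\}$, a strip strictly containing $\mu_c$. Inverting the matrix pencil away from the roots of $P(z) = \det(z^2 \mathbf{D} - cz\,\mathbf{I} + \mathbf{L})$, and using that the only root of $P$ on $[0, \mu_c]$ sits at $\mu_c$ with multiplicity $1+k_c$ (simple zero when $c>c^{\star}$, double zero at the tangency point when $c=c^{\star}$), the Wiener--Ikehara Tauberian theorem applied componentwise after projection onto the left Perron--Frobenius eigenvector of $\mu_c^2 \mathbf{D} + \mathbf{L}$ yields the announced equivalent $\mathbf{p}(\xi) \sim A \xi^{k_c} e^{-\mu_c \xi}\mathbf{n}_{\mu_c}$.

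The derivative asymptotics follow by a bootstrap: viewing $(TW[c])$ as a first-order linear system for $(\mathbf{p},\mathbf{p}')$ with an asymptotically negligible right-hand side $\mathbf{c}[\mathbf{p}]\circ \mathbf{p} = o(\mathbf{p})$, classical ODE asymptotics at infinity force $\mathbf{p}'/\mathbf{p} \to -\mu_c$ and $\mathbf{p}''/\mathbf{p} \to \mu_c^2$ componentwise. The particular case $\mathbf{d} = \mathbf{1}_{N,1}$ reduces to the identity $\lambda_{PF}(\mu^2 \mathbf{I}_N + \mathbf{L}) = \mu^2 + \lambda_{PF}(\mathbf{L})$, so that $\mathbf{n}_{\mu_c} = \mathbf{n}_{PF}(\mathbf{L})$ and $\mu_c$ is the smaller root of $\mu^2 - c\mu + \lambda_{PF}(\mathbf{L}) = 0$, namely $\mu_c = \tfrac{1}{2}\bigl(c - \sqrt{c^2 - 4\lambda_{PF}(\mathbf{L})}\bigr)$.

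The main obstacle is certifying that the constant $A$ is strictly positive, ruling out the possibility that $\mathbf{p}$ decays strictly faster than $e^{-\mu_c \xi}$ (respectively $\xi e^{-\mu_{c^{\star}}\xi}$ at the critical speed). This amounts to showing that $\tilde{\mathbf{p}}$ has a genuine pole at $\mu_c$ of the correct order rather than a removable singularity --- equivalently, that the component of $\mathbf{p}$ along the left Perron--Frobenius eigenvector of $\mu_c^2 \mathbf{D} + \mathbf{L}$ does not decay faster than the prediction. Analytically, this requires a careful analysis of the order of the zero of $P$ at $\mu_c$ combined with the simplicity of $\lambda_{PF}(\mu_c^2 \mathbf{D} + \mathbf{L})$; conceptually, it reflects the fact that a profile leaving the positive cone from $\mathbf{0}$ must do so along the dominant Perron--Frobenius direction, a property ultimately rooted in the strict positivity of $\mathbf{p}$ and the irreducibility hypothesis $(H_1)$.
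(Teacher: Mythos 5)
Your overall architecture --- a Laplace transform of the profile combined with the Ikehara theorem, with the asymptotic direction dictated by the Perron--Frobenius structure of $\mu_c^{2}\mathbf{D}+\mathbf{L}$ --- is exactly the strategy of the paper. But three of the supporting steps have genuine gaps. First, the a priori bound $\mathbf{p}\left(\xi\right)\leq C\left(1+\xi^{k_{c}}\right)\text{e}^{-\mu_{c}\xi}\mathbf{n}_{\mu_{c}}$ obtained ``by comparison on $[R,+\infty)$'' is not justified: the operator $-\mathbf{D}\partial_{\xi\xi}-c\partial_{\xi}-\mathbf{L}$ does not satisfy a maximum principle on a half-line in the class of decaying functions, because its ODE kernel already contains two families of positive exponentials (for $c>c^{\star}$, $\text{e}^{-\mu_{1,c}\xi}\mathbf{n}_{\mu_{1,c}}$ and $\text{e}^{-\mu_{2,c}\xi}\mathbf{n}_{\mu_{2,c}}$), so one can build sign-changing supersolutions with nonnegative data at $\xi=R$ that vanish at $+\infty$. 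The paper deliberately avoids the comparison principle here: the required integrability of $\xi\mapsto\text{e}^{\mu\xi}\mathbf{p}\left(\xi\right)$ for $\text{Re}\left(\mu\right)<\mu_{c}$ comes from \lemref{Exponential_decay_of_the_profile} (the liminf of $-p_{i}'/p_{i}$ belongs to the set of roots of $\lambda_{PF}\left(\mu^{2}\mathbf{D}+\mathbf{L}\right)=c\mu$, hence is at least $\mu_{c}$) together with the Gronwall lemma; that lemma also supplies the eventual monotonicity of each $p_{i}$, which you never verify but which the Ikehara theorem requires of $f$.

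Second, to apply Ikehara you must extend $\left(\mu_{c}-z\right)^{k_{c}+1}\mathbf{f}_{+}\left(z\right)$ analytically up to and including the whole vertical line $\text{Re}\left(z\right)=\mu_{c}$, which requires that $\det\left(z^{2}\mathbf{D}-cz\mathbf{I}+\mathbf{L}\right)$ have no zero of the form $\mu_{c}+i\theta$ with $\theta\neq0$; you only discuss roots on the real segment $[0,\mu_{c}]$. The paper handles this point with the Volpert--Volpert--Volpert lemma. Third, and most seriously, the strict positivity of $A$ --- which you correctly single out as the main obstacle, and which is the real content of the theorem since it rules out decay at the faster rate $\mu_{2,c}$ --- is left at the level of a heuristic. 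The paper closes it by returning to the proof of \lemref{Exponential_decay_of_the_profile}: along a sequence $\xi_{n}\to+\infty$ realizing $\mu=\max_{k}\limsup\left(-p_{k}'/p_{k}\right)$, the normalized translates $\mathbf{p}\left(\cdot+\xi_{n}\right)/p_{k}\left(\xi_{n}\right)$ converge in $\mathscr{C}_{loc}^{2}$ to a positive multiple of $\text{e}^{-\mu\xi}\mathbf{n}_{\mu}$; confronting this with the Ikehara limit forces $\mu=\mu_{c}$, $\mathbf{n}=\mathbf{n}_{\mu_{c}}$ and $A>0$ simultaneously, and then delivers the derivative asymptotics as well. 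Some such quantitative input, beyond the observation that a positive profile ``should'' leave $\mathbf{0}$ along the Perron--Frobenius direction, is indispensable: as written, your argument cannot exclude $A=0$.
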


This result is proved in Section 2.

Recall that up to a well-known change of variable $x$, we can always
assume without loss of generality $\max\limits _{i\in\left[N\right]}d_{i}=1$. 

If we have in mind the mutation\textendash competition\textendash diffusion
system, then the ecological interpretation of this result is the following:
at the leading edge of the invasion, the normalized distribution in
phenotypes is $\mathbf{n}_{\mu_{c}}$ and the total population is
proportional to $\left(x-ct\right)^{k_{c}}\text{e}^{-\mu_{c}\left(x-ct\right)}$. 

In the special case $c=c^{\star}$, this theorem answers positively
a conjecture of Morris, Börger and Crooks \cite[Section 4]{Morris_Borger_Crooks}.

Recall that, for the scalar KPP equation, the analogous result on
exponential decays has two common proofs, one using ODE arguments
and especially phase-plane analysis and the other one using elliptic
arguments and especially the comparison principle. Although we could
prove the above result by phase-plane analysis indeed, the proof we
will provide uses a third technique relying upon the monotonicity
of the profiles near $+\infty$, bilateral Laplace transforms and
a Ikehara theorem. In our opinion, this technique of proof has independent
interest: on one hand, it does not require the comparison principle
and, on the other hand, it might be generalizable to non-ODE settings
(space-periodic media and pulsating fronts, for instance). 

\subsection{Results: at the back of the fronts}

On the contrary, the distribution of the profiles near $-\infty$
is a much more intricate question, where the multidimensional and
non-cooperative structure of the KPP system become preponderant.

Given a positive classical solution $\mathbf{u}$ of $\left(S_{KPP}\right)$,
a \textit{traveling wave connecting $\mathbf{0}$ to $\mathbf{u}$}
is a traveling wave whose profile $\mathbf{p}$ converges to $\mathbf{u}$
as $\xi\to-\infty$. The general aim is to prove that all traveling
waves connect $\mathbf{0}$ to some positive classical solution of
$\left(S_{KPP}\right)$ and, when several solutions can be connected
to $\mathbf{0}$, to determine somehow which connection prevails.
However, as will be explained in \subsecref{Delicate_back} (and was
first pointed out in Barles\textendash Evans\textendash Souganidis
\cite{Barles_Evans_S}), a general and precise treatment of this problem
is likely impossible. It is necessary to focus on special cases. Looking
at the literature, we find two frameworks commonly assumed to be mathematically
tractable:
\begin{itemize}
\item competition terms $c_{i}\left(\mathbf{v}\right)$ with separated dependencies
on $i$ and on $\mathbf{v}$ (Coville\textendash Fabre \cite{Coville_Fabre_},
Dockery\textendash Hutson\textendash Mischaikow\textendash Pernarowski
\cite{Dockery_1998}, Griette\textendash Raoul \cite{Griette_Raoul},
Leman\textendash Méléard\textendash Mirrahimi \cite{Leman_Meleard_}), 
\item two-component systems with linear competition and vanishingly small
mutations (Dockery\textendash Hutson\textendash Mischaikow\textendash Pernarowski
\cite{Dockery_1998}, Griette\textendash Raoul \cite{Griette_Raoul},
Morris\textendash Börger\textendash Crooks \cite{Morris_Borger_Crooks}).
\end{itemize}

\subsubsection{Separated competition}
\begin{enumerate}[resume]
\item There exist $\mathbf{a}\in\mathsf{K}^{++}$ and $b:\mathbb{R}^{N}\to\mathbb{R}$
such that:
\begin{itemize}
\item $\mathbf{c}\left(\mathbf{v}\right)=b\left(\mathbf{v}\right)\mathbf{a}$
for all $\mathbf{v}\in\mathsf{K}$;
\item the function $w\mapsto b\left(w\mathbf{e}_{i}+\mathbf{v}\right)$
is increasing in $\left(0,+\infty\right)$ for all $\mathbf{v}\in\mathsf{K}$
and all $i\in\left[N\right]$.
\end{itemize}
\end{enumerate}
By monotonicity of $\mathbf{c}$, supplementing $\left(H_{1}\right)$\textendash $\left(H_{5}\right)$
with $\left(H_{6}\right)$ implies the existence of a profile $\mathbf{p}\in\mathscr{P}_{c}$
for all $c\geq c^{\star}$. The decomposition $\mathbf{c}=b\mathbf{a}$
is unique up to a multiplicative normalization and we will assume
for instance $\max\limits _{i\in\left[N\right]}a_{i}=1$. We denote
$\mathbf{A}=\text{diag}\left(\mathbf{a}\right)$ (so that $\mathbf{c}\left(\mathbf{v}\right)\circ\mathbf{v}=b\left(\mathbf{v}\right)\mathbf{A}\mathbf{v}$).

An especially interesting subcase is the intersection between $\left(H_{6}\right)$
and the Lotka\textendash Volterra competition form, where $b$ is
a linear functional, that is where there exists $\mathbf{b}\in\mathsf{K}^{++}$
such that 
\[
b\left(\mathbf{v}\right)=\mathbf{b}^{T}\mathbf{v}\text{ for all }\mathbf{v}\in\mathsf{K}.
\]
The system $\left(E_{KPP}\right)$ then reads 
\[
\partial_{t}\mathbf{u}-\mathbf{D}\partial_{xx}\mathbf{u}=\mathbf{L}\mathbf{u}-\left(\mathbf{b}^{T}\mathbf{u}\right)\mathbf{A}\mathbf{u}.
\]
The systems studied in Dockery\textendash Hutson\textendash Mischaikow\textendash Pernarowski
\cite{Dockery_1998} and in Griette\textendash Raoul \cite{Griette_Raoul}
correspond respectively to 
\[
\mathbf{a}=\mathbf{b}=\mathbf{1}_{N,1}
\]
 and to 
\[
\left(\mathbf{a},\mathbf{b}\right)=\left(\left(\frac{K}{r},1\right)^{T},\frac{r}{K}\mathbf{1}_{2,1}\right).
\]

The matrix $\mathbf{A}^{-1}\mathbf{L}$ being essentially nonnegative
and irreducible, the following eigenpair is well-defined:
\[
\left(\lambda_{\mathbf{a}},\mathbf{n}_{\mathbf{a}}\right)=\left(\lambda_{PF}\left(\mathbf{A}^{-1}\mathbf{L}\right),\mathbf{n}_{PF}\left(\mathbf{A}^{-1}\mathbf{L}\right)\right).
\]
Applying \cite[Theorem 1.4]{Girardin_2016_2} to the following two
pairs of parameters $\left(\mathbf{L},\mathbf{c}\right)$:
\[
\left(\mathbf{L},\mathbf{v}\mapsto\left(\mathbf{1}_{1,N}\mathbf{v}\right)\mathbf{a}\right),
\]
\[
\left(\mathbf{A}^{-1}\mathbf{L},\mathbf{v}\mapsto\left(\mathbf{1}_{1,N}\mathbf{v}\right)\mathbf{1}_{N,1}\right),
\]
it is easily deduced that $\lambda_{PF}\left(\mathbf{L}\right)>0$
if and only if $\lambda_{\mathbf{a}}>0$. By strict monotonicity of
$\alpha\mapsto b\left(\alpha\mathbf{n}_{\mathbf{a}}\right)$, we can
define $\alpha^{\star}>0$ as the unique solution of $b\left(\alpha\mathbf{n}_{\mathbf{a}}\right)=\lambda_{\mathbf{a}}$.
It follows easily that $\mathbf{v}^{\star}=\alpha^{\star}\mathbf{n}_{\mathbf{a}}$
is the unique positive constant solution of $\left(S_{KPP}\right)$.
In particular, if $b$ is a linear functional, then
\[
\mathbf{v}^{\star}=\frac{\lambda_{\mathbf{a}}}{\mathbf{b}^{T}\mathbf{n}_{\mathbf{a}}}\mathbf{n}_{\mathbf{a}}.
\]
\begin{thm}
\label{thm:Distribution_back_(H6)} Assume $\left(H_{6}\right)$,
$\mathbf{d}=\mathbf{1}_{N,1}$ and $\mathbf{a}=\mathbf{1}_{N,1}$. 

For all $c\in[c^{\star},+\infty)$, let $p_{c}\in\mathscr{C}^{2}\left(\mathbb{R}\right)$
such that $\left(p_{c},c\right)$ is the unique traveling wave solution
of the scalar equation
\[
\partial_{t}u-\partial_{xx}u=\lambda_{PF}\left(\mathbf{L}\right)u-b\left(u\mathbf{n}_{PF}\left(\mathbf{L}\right)\right)u
\]
connecting $0$ to $\alpha^{\star}$ and satisfying $p_{c}\left(0\right)=\frac{\alpha^{\star}}{2}$. 

Then all $\mathbf{p}\in\mathscr{P}_{c}$ have the form
\[
\mathbf{p}:\xi\mapsto p_{c}\left(\xi-\xi_{0}\right)\mathbf{n}_{PF}\left(\mathbf{L}\right)\text{ with }\xi_{0}\in\mathbb{R}.
\]
Consequently, $\mathbf{p}\in\mathscr{P}_{c}$ is unique up to translation
and connects $\mathbf{0}$ to $\mathbf{v}^{\star}$.
\end{thm}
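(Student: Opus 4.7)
The plan is to exploit the structural collapse induced by $\mathbf{d} = \mathbf{a} = \mathbf{1}_{N,1}$, which makes the Perron--Frobenius ray $\mathbb{R}_{+} \mathbf{n}_{PF}(\mathbf{L})$ an invariant manifold of $(TW[c])$. Substituting the ansatz $\mathbf{p} = q \mathbf{n}_{PF}(\mathbf{L})$ into $(TW[c])$, using $\mathbf{L} \mathbf{n}_{PF}(\mathbf{L}) = \lambda_{PF}(\mathbf{L}) \mathbf{n}_{PF}(\mathbf{L})$ and $\mathbf{D} = \mathbf{I}_N$, every coordinate equation reduces to the scalar KPP equation of the statement; thanks to $(H_6)$, $q \mapsto b(q \mathbf{n}_{PF}(\mathbf{L}))$ is increasing, so classical scalar theory yields uniqueness up to translation of the profile $p_c$ connecting $0$ to $\alpha^{\star}$.

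The real content is to show every $\mathbf{p} \in \mathscr{P}_c$ takes values on that ray. Let $\boldsymbol{\ell} \gg \mathbf{0}$ denote the left Perron--Frobenius eigenvector of $\mathbf{L}$, normalized by $\boldsymbol{\ell}^T \mathbf{n}_{PF}(\mathbf{L}) = 1$, and decompose $\mathbf{p} = q \mathbf{n}_{PF}(\mathbf{L}) + \mathbf{r}$ with $q := \boldsymbol{\ell}^T \mathbf{p}$ and $\mathbf{r}$ in the $\mathbf{L}$-invariant hyperplane $V := \ker \boldsymbol{\ell}^T$. Projecting $(TW[c])$ on $\boldsymbol{\ell}^T$ and on $V$ yields
\[
-q'' - cq' = \lambda_{PF}(\mathbf{L}) q - b(\mathbf{p}) q, \qquad -\mathbf{r}'' - c \mathbf{r}' = \mathbf{L} \mathbf{r} - b(\mathbf{p}) \mathbf{r},
\]
where the second equation lives entirely on $V$ and the restriction of $\mathbf{L}$ to $V$ has spectral bound strictly below $\lambda_{PF}(\mathbf{L})$ by simplicity of the Perron--Frobenius eigenvalue. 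Once $\mathbf{r} \equiv \mathbf{0}$ is established, the equation on $q$ closes into the scalar KPP one, and both the form $\mathbf{p}(\xi) = p_c(\xi - \xi_0) \mathbf{n}_{PF}(\mathbf{L})$ and the connection to $\mathbf{v}^{\star} = \alpha^{\star} \mathbf{n}_{PF}(\mathbf{L})$ follow.

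To obtain $\mathbf{r} \equiv \mathbf{0}$ I would feed \thmref{Distribution_edge} into the analysis. With $\mathbf{d} = \mathbf{1}_{N,1}$ one has $\mathbf{n}_{\mu_c} = \mathbf{n}_{PF}(\mathbf{L})$, so the theorem gives $\mathbf{p}(\xi) \sim A \xi^{k_c} \mathrm{e}^{-\mu_c \xi} \mathbf{n}_{PF}(\mathbf{L})$ and hence $\mathbf{r}(\xi) = o\bigl(\xi^{k_c} \mathrm{e}^{-\mu_c \xi}\bigr)$ as $\xi \to +\infty$. The constant-coefficient limit of the $\mathbf{r}$-equation at $+\infty$ has characteristic rates $(c \pm \sqrt{c^2 - 4\lambda})/2$ for $\lambda \in \sigma(\mathbf{L}) \setminus \{\lambda_{PF}(\mathbf{L})\}$, and the spectral gap places all the slow rates strictly below $\mu_c$, so this sharp $o$-decay annihilates every slow mode in $\mathbf{r}$. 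I would propagate this asymptotic rigidity into a global identity via the scalar surrogates $\overline{q}(\xi) = \max_i p_i(\xi)/n_{PF,i}(\mathbf{L})$ and $\underline{q}(\xi) = \min_i p_i(\xi)/n_{PF,i}(\mathbf{L})$: using the essential non-negativity of $\mathbf{L}$ and $\mathbf{L} \mathbf{n}_{PF}(\mathbf{L}) = \lambda_{PF}(\mathbf{L}) \mathbf{n}_{PF}(\mathbf{L})$, one checks that $\overline{q}$ (resp.\ $\underline{q}$) is a viscosity sub- (resp.\ super-) solution of the very linear equation $-y'' - cy' = (\lambda_{PF}(\mathbf{L}) - b(\mathbf{p}))y$ that $q$ solves classically, and a strong-maximum-principle argument combined with the $(H_6)$ monotonicity of $b$ should then force $\overline{q} \equiv \underline{q}$, i.e.\ $\mathbf{p}$ collinear to $\mathbf{n}_{PF}(\mathbf{L})$ at every point.

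The hard step is precisely this last global rigidity. The system $(TW[c])$ is non-cooperative, so no comparison principle applies to $\mathbf{p}$ itself; the linear equation for $\mathbf{r}$ has a coefficient $b(\mathbf{p})$ of non-constant sign and the operator $\mathbf{L}$ restricted to $V$ is generally non-symmetric, which rules out plain scalar maximum principles and naive energy estimates. The delicate point is to weld together the spectral gap of the restricted operator, the sharp exponential rate $\mu_c$ supplied by \thmref{Distribution_edge}, and the monotonicity provided by $(H_6)$ into a single argument that forces $\mathbf{r}$ to vanish identically on $\mathbb{R}$.
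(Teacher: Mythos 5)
Your reduction is the right one and your first two paragraphs match the paper's strategy: the paper also splits $\mathbf{p}$ into its Perron--Frobenius component and a complementary part (via the Jordan basis $\mathbf{q}=\mathbf{P}\mathbf{p}$, with $q_{1}$ playing the role of your $q=\boldsymbol{\ell}^{T}\mathbf{p}$), and also uses \thmref{Distribution_edge} to control the ratio of the complementary part to $q_{1}$ near $+\infty$. But the step you yourself flag as ``the hard step'' is a genuine gap, and the tools you propose for it do not close it. The linear equation you write for $\mathbf{r}$, and the equation your surrogates $\overline{q},\underline{q}$ sub/super-solve, both carry the zeroth-order coefficient $\lambda_{PF}\left(\mathbf{L}\right)-b\left(\mathbf{p}\right)$, which changes sign along the profile (positive near $+\infty$, vanishing at the back); no strong maximum principle applies to $-y''-cy'-\left(\lambda_{PF}\left(\mathbf{L}\right)-b\left(\mathbf{p}\right)\right)y$ on all of $\mathbb{R}$ without a sign condition or a positive supersolution, and the $+\infty$ asymptotics alone only constrain one end of the line. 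The monotonicity of $b$ from $\left(H_{6}\right)$ never enters your rigidity argument in a usable way.

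The missing idea in the paper is to divide by $q_{1}$ \emph{before} invoking any maximum principle: setting $z=q_{k}/q_{1}$ and subtracting the $q_{1}$-equation cancels the nonlinear term $b\left(\mathbf{p}\right)$ entirely and leaves
\[
-z''-\frac{q_{1}c+2q_{1}'}{q_{1}}z'+\left(\lambda_{PF}\left(\mathbf{L}\right)-\lambda\right)z=0,
\]
whose zeroth-order coefficient is the \emph{constant} spectral gap $\lambda_{PF}\left(\mathbf{L}\right)-\lambda>0$ (in real part). Passing to $w=\left|z\right|^{2}$ gives $-w''-\beta w'+\gamma w\leq0$ with $\gamma>0$ constant, which excludes interior maxima of the bounded nonnegative function $w$, forces finite limits at $\pm\infty$, and then elliptic regularity forces those limits, hence $w$, to vanish. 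The paper runs this by backward induction through the Jordan blocks (starting from the eigenvalue of smallest real part, so that each component's equation decouples once the one below it vanishes). Your global boundedness of $q_{k}/q_{1}$ (from positivity of $\liminf$ at $-\infty$ and the edge asymptotics at $+\infty$) is exactly the input this argument needs, so you had the right ingredients; what is missing is the ratio trick that converts the sign-changing coefficient into a coercive constant one.
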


This result is proved in Section 3.2.

This theorem establishes that the set of assumptions $\left(H_{6}\right)$,
$\mathbf{d}=\mathbf{1}_{N,1}$, $\mathbf{a}=\mathbf{1}_{N,1}$ is
so restrictive that the multidimensional problem can in fact be reduced
to the scalar one. This is really the strongest result we could hope
for.

Notice that it shows that the following two mutation\textendash competition\textendash diffusion
systems:
\[
\partial_{t}\mathbf{u}-\partial_{xx}\mathbf{u}=r\mathbf{u}+\mathbf{M}_{1}\mathbf{u}-\left(\mathbf{b}^{T}\mathbf{u}\right)\mathbf{u},
\]
\[
\partial_{t}\mathbf{u}-\partial_{xx}\mathbf{u}=r\mathbf{u}+\mathbf{M}_{2}\mathbf{u}-\left(\mathbf{b}^{T}\mathbf{u}\right)\mathbf{u},
\]
 with $r>0$ and $\mathbf{M}_{1}$ and $\mathbf{M}_{2}$ essentially
nonnegative irreducible with null Perron\textendash Frobenius eigenvalues
and equal Perron\textendash Frobenius eigenvectors, have the exact
same traveling wave solutions. In other words, all else being equal
(\textit{neutral} internal structure), the mutation strategy does
not matter. In the absence of mutations, \textit{neutral genetic diversity}
has been studied recently in a collection of papers by Garnier, Hamel,
Roques and others (for instance, we refer to \cite{Bonnefon_Coville_Garnier,Garnier_Giletti_Hamel_Roques}).
In view of their results on pulled fronts, the preceding theorem indicates
that the presence of mutations is a necessary and sufficient condition
to ensure the preservation of the genetic diversity during the invasion.

As a side note (slightly off topic), we can use the reduction to the
scalar problem to prove the following generalization of a result due
to Coville and Fabre \cite[Theorem 1.1]{Coville_Fabre_}. 
\begin{thm}
\label{thm:Global_asymptotic_stability_(H6)} Assume $\left(H_{6}\right)$
and $\mathbf{a}=\mathbf{1}_{N,1}$. 

All positive classical solutions of $\left(E_{KPP}^{0}\right)$ set
in $\left(0,+\infty\right)$ converge as $t\to+\infty$ to $\mathbf{v}^{\star}$.

Furthermore, if $\mathbf{d}=\mathbf{1}_{N,1}$, then, for all bounded
intervals $I\subset\mathbb{R}$, all bounded positive classical solutions
$\mathbf{u}$ of $\left(E_{KPP}\right)$ set in $\left(0,+\infty\right)\times\mathbb{R}$
satisfy
\[
\lim_{t\to+\infty}\sup_{x\in I}\left|\mathbf{u}\left(t,x\right)-\mathbf{v}^{\star}\right|=0.
\]

Consequently, if $\mathbf{d}=\mathbf{1}_{N,1}$, the set of bounded
nonnegative classical solutions of $\left(S_{KPP}\right)$ is exactly
$\left\{ \mathbf{0},\mathbf{v}^{\star}\right\} $.
\end{thm}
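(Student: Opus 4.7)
My plan is to exploit the structural reduction enabled by $\mathbf{a}=\mathbf{1}_{N,1}$: the reaction simplifies to $\mathbf{L}\mathbf{u}-b\left(\mathbf{u}\right)\mathbf{u}$, so the substitution
\[
\mathbf{u}\left(t\right)=\exp\!\left(-\int_{0}^{t}b\left(\mathbf{u}\left(s\right)\right)\,\mathrm{d}s\right)\mathbf{w}\left(t\right)
\]
linearizes $\left(E_{KPP}^{0}\right)$ into $\mathbf{w}'=\mathbf{L}\mathbf{w}$, giving $\mathbf{w}\left(t\right)=e^{t\mathbf{L}}\mathbf{u}\left(0\right)$. Since $\mathbf{L}$ is essentially nonnegative and irreducible and $\mathbf{u}\left(0\right)\gg\mathbf{0}$, Perron\textendash Frobenius asymptotics yield $\mathbf{w}\left(t\right)/\left|\mathbf{w}\left(t\right)\right|\to\mathbf{n}_{PF}\left(\mathbf{L}\right)$, and the positive scalar prefactor passes the same convergence to $\mathbf{u}\left(t\right)/\left|\mathbf{u}\left(t\right)\right|$. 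Projecting onto the positive left Perron\textendash Frobenius eigenvector $\boldsymbol{\ell}$ of $\mathbf{L}$, normalized by $\boldsymbol{\ell}^{T}\mathbf{n}_{PF}\left(\mathbf{L}\right)=1$, the scalar $v\left(t\right)=\boldsymbol{\ell}^{T}\mathbf{u}\left(t\right)$ obeys $v'=\left(\lambda_{PF}\left(\mathbf{L}\right)-b\left(\mathbf{u}\left(t\right)\right)\right)v$, an asymptotically autonomous scalar ODE whose coefficient converges to $\lambda_{PF}\left(\mathbf{L}\right)-b\left(v\mathbf{n}_{PF}\left(\mathbf{L}\right)\right)$, strictly decreasing in $v$ by $\left(H_{6}\right)$ and vanishing exactly at $v=\alpha^{\star}$. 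Standard asymptotically autonomous ODE theory then forces $v\left(t\right)\to\alpha^{\star}$ and consequently $\mathbf{u}\left(t\right)\to\mathbf{v}^{\star}$.

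\textbf{Part 2 (the PDE under $\mathbf{d}=\mathbf{1}_{N,1}$).} Given a bounded positive classical solution $\mathbf{u}$ of $\left(E_{KPP}\right)$ on $\left(0,+\infty\right)\times\mathbb{R}$, parabolic Schauder estimates provide uniform local $\mathscr{C}^{1,2}$-bounds, so the $\omega$-limit set of the time-translates $\mathbf{u}\left(\cdot+t_{n},\cdot\right)$, taken in $\mathscr{C}^{1,2}_{\mathrm{loc}}$, is nonempty, compact, connected, time-invariant, and consists of entire bounded classical solutions $\mathbf{U}$ of $\left(E_{KPP}\right)$. The crux is a Liouville-type rigidity: each such $\mathbf{U}$ is either $\mathbf{0}$ or $\mathbf{v}^{\star}$. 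I would decompose $\mathbf{U}\left(t,x\right)=\lambda\left(t,x\right)\mathbf{n}_{PF}\left(\mathbf{L}\right)+\mathbf{r}\left(t,x\right)$ with $\boldsymbol{\ell}^{T}\mathbf{r}=0$, whence
\[
\partial_{t}\lambda-\partial_{xx}\lambda=\left(\lambda_{PF}\left(\mathbf{L}\right)-b\left(\mathbf{U}\right)\right)\lambda,\qquad\partial_{t}\mathbf{r}-\partial_{xx}\mathbf{r}=\left(\mathbf{L}-b\left(\mathbf{U}\right)\mathbf{I}_{N}\right)\mathbf{r}
\]
on the $\mathbf{L}$-invariant hyperplane $\boldsymbol{\ell}^{\perp}$, where $\mathbf{L}|_{\boldsymbol{\ell}^{\perp}}$ has spectrum strictly below $\lambda_{PF}\left(\mathbf{L}\right)$. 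A weighted energy estimate exploiting this spectral gap, combined with the backward-in-time boundedness of the entire orbit, should force $\mathbf{r}\equiv\mathbf{0}$. The residual scalar $\lambda$ then solves a bounded entire scalar Fisher\textendash KPP equation with $\left(H_{6}\right)$-monotone nonlinearity, whose only bounded entire positive solutions are $0$ and $\alpha^{\star}$ by classical scalar Liouville rigidity. The strong maximum principle applied to the cooperative linear part $\mathbf{L}-b\left(\mathbf{U}\right)\mathbf{I}_{N}$ (essentially nonnegative and irreducible) rules out $\mathbf{U}\equiv\mathbf{0}$ from the $\omega$-limit of a positive $\mathbf{u}$, so $\omega\left(\mathbf{u}\right)=\left\{\mathbf{v}^{\star}\right\}$ and local uniform convergence follows.

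\textbf{Part 3 and main obstacle.} The last assertion is immediate from part~2: a bounded nonnegative classical solution $\mathbf{u}^{\star}$ of $\left(S_{KPP}\right)$ is a stationary solution of $\left(E_{KPP}\right)$; if $\mathbf{u}^{\star}\not\equiv\mathbf{0}$, the strong maximum principle combined with irreducibility of $\mathbf{L}$ yields $\mathbf{u}^{\star}\gg\mathbf{0}$ pointwise, and part~2 pins $\mathbf{u}^{\star}\equiv\mathbf{v}^{\star}$ on every bounded interval, hence everywhere. The hard step will be the rigidity claim $\mathbf{r}\equiv\mathbf{0}$ for entire bounded orbits: although $\mathbf{L}|_{\boldsymbol{\ell}^{\perp}}$ carries a genuine spectral gap beneath $\lambda_{PF}\left(\mathbf{L}\right)$, the damping coefficient $b\left(\mathbf{U}\right)$ can be arbitrarily small wherever $\mathbf{U}$ is small, so a naive exponential decay estimate on $\mathbf{r}$ need not close. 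Resolving this will likely require combining the spectral gap with the two-sided-in-time boundedness of entire orbits (which forbids growing unstable modes), possibly by adapting to the vector setting the bilateral Laplace-transform and Ikehara machinery developed in Section~2.
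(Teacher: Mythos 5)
Your Part 1 is correct but follows a genuinely different route from the paper: you linearize the ODE outright via $\mathbf{w}=\exp\bigl(\int_{0}^{t}b(\mathbf{u})\bigr)\mathbf{u}$ and invoke Perron--Frobenius asymptotics of $e^{t\mathbf{L}}$, whereas the paper works in Jordan coordinates $\mathbf{q}=\mathbf{P}\mathbf{u}$ and shows that each non-Perron component satisfies a damped inequality relative to the Perron component. Your substitution is arguably slicker for the ODE (and, as a bonus, it rules out extinction for free, since $|e^{t\mathbf{L}}\mathbf{u}(0)|\sim Ce^{\lambda_{PF}t}$ while $\mathbf{u}\to\mathbf{0}$ would force $\int_{0}^{t}b(\mathbf{u})=o(t)$, a contradiction); you should, however, say explicitly why $v\not\to 0$ rather than appealing only to ``standard asymptotically autonomous ODE theory'', and note that boundedness of $\mathbf{u}$ (used for the asymptotic autonomy) comes from the persistence/boundedness results of the predecessor paper.

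Part 2 contains a genuine gap, which you yourself flag: the rigidity $\mathbf{r}\equiv\mathbf{0}$. Your concern is justified and your proposed fixes (weighted energy estimate, or Ikehara machinery) are not the right tools. The operator governing $\mathbf{r}$ on $\boldsymbol{\ell}^{\perp}$ is $\mathbf{L}-b(\mathbf{U})\mathbf{I}_{N}$, whose spectrum has real part up to $\operatorname{Re}\lambda_{m-1}-b(\mathbf{U})$; since $\operatorname{Re}\lambda_{m-1}$ may well be positive and $b(\mathbf{U})$ is only bounded below by $b(\nu\mathbf{1}_{N,1})$, the \emph{difference} $\mathbf{r}$ is simply not damped, and two-sided boundedness of the entire orbit does not by itself exclude a persistent nonzero $\mathbf{r}$. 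The idea you are missing is to projectivize rather than subtract: in Jordan coordinates $\mathbf{v}=\mathbf{P}\mathbf{U}$, the \emph{ratio} $v_{k}/v_{1}$ satisfies an equation in which the term $b(\mathbf{U})$ cancels exactly (because, under $\mathbf{a}=\mathbf{1}_{N,1}$, the competition acts as the same scalar on every component), leaving the pure spectral gap $\lambda_{PF}(\mathbf{L})-\operatorname{Re}\lambda_{k}>0$ as a uniform damping coefficient for $|v_{k}/v_{1}|^{2}$, plus a drift $2\partial_{x}v_{1}/v_{1}$ controlled by the Harnack inequality. This is the paper's Lemma 3.2: it yields $\sup_{x}|(\mathbf{I}-\boldsymbol{\Pi}_{PF}(\mathbf{L}))\mathbf{u}(t,x)|\le C e^{-\gamma t}$ whenever the Perron component is bounded below, and applying it to all time-translates of an entire solution forces the non-Perron part to vanish identically. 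The only input needed is the uniform lower bound $\nu>0$ on the components of the $\omega$-limit solutions, which comes from the persistence theorems of the predecessor paper -- not, as you suggest, from the strong maximum principle alone, which gives pointwise positivity but no uniform-in-time bound. Your Part 3 is fine once Part 2 is repaired.
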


This result is proved in Section 3.3.

We believe that the preceding two theorems are robust, in that they
should remain true in a neighborhood of $\left(\mathbf{d},\mathbf{a}\right)=\left(\mathbf{1}_{N,1},\mathbf{1}_{N,1}\right)$.
In particular, \thmref{Distribution_back_(H6)} could be extended
by showing with the implicit function theorem that no solution of
$\left(TW\left[c\right]\right)$ bifurcates from $\mathbf{v}^{\star}$
at $\left(\mathbf{d},\mathbf{a}\right)=\left(\mathbf{1}_{N,1},\mathbf{1}_{N,1}\right)$.
\thmref{Global_asymptotic_stability_(H6)} could be extended thanks
to Conley index theory and a Morse decomposition, exactly as in Dockery\textendash Hutson\textendash Mischaikow\textendash Pernarowski
\cite[Section 4]{Dockery_1998}. For the sake of brevity, we do not
address these questions. 

\subsubsection{Two-component systems with linear competition and small mutations}
\begin{enumerate}[resume]
\item $N=2$, there exists $\mathbf{C}\gg\mathbf{0}$ such that
\[
\mathbf{c}\left(\mathbf{v}\right)=\mathbf{C}\mathbf{v}\text{ for all }\mathbf{v}\in\mathsf{K},
\]
and the vector $\mathbf{r}\in\mathbb{R}^{N}$ given by the unique
decomposition of $\mathbf{L}$ of the form
\[
\mathbf{L}=\text{diag}\left(\mathbf{r}\right)+\mathbf{M}\text{ with }\mathbf{1}_{1,N}\mathbf{M}=\mathbf{0}
\]
is positive.
\end{enumerate}
}

By monotonicity of $\mathbf{c}$, supplementing $\left(H_{1}\right)$\textendash $\left(H_{5}\right)$
with $\left(H_{7}\right)$ implies the existence of a profile $\mathbf{p}\in\mathscr{P}_{c}$
for all $c\geq c^{\star}$.

When $\left(H_{7}\right)$ is satisfied, we denote $\mathbf{R}=\text{diag}\left(\mathbf{r}\right)$
and define $\left(\eta,\mathbf{m}\right)\in\left(0,+\infty\right)\times\mathsf{S}^{++}\left(\mathbf{0},1\right)$
such that
\[
\mathbf{M}=\eta\left(\begin{matrix}-1 & 1\\
1 & -1
\end{matrix}\right)\text{diag}\left(\mathbf{m}\right).
\]
The quantity $\eta$ is unique and commonly referred to as the\textit{
mutation rate}.

In other words, we are considering the following system:
\[
\left\{ \begin{matrix}\partial_{t}u_{1}-d_{1}\partial_{xx}u_{1}=r_{1}u_{1}-\left(c_{1,1}u_{1}+c_{1,2}u_{2}\right)u_{1}+\eta m_{1}\left(u_{2}-u_{1}\right)\\
\partial_{t}u_{2}-d_{2}\partial_{xx}u_{2}=r_{2}u_{2}-\left(c_{2,1}u_{1}+c_{2,2}u_{2}\right)u_{2}+\eta m_{2}\left(u_{1}-u_{2}\right)
\end{matrix}\right.
\]

The idea is to assume that $\eta$ is small compared to $\mathbf{r}$
so that the mutation\textendash competition\textendash diffusion system
is close to the pure competition\textendash diffusion system
\[
\left\{ \begin{matrix}\partial_{t}u_{1}-d_{1}\partial_{xx}u_{1}=r_{1}u_{1}-\left(c_{1,1}u_{1}+c_{1,2}u_{2}\right)u_{1}\\
\partial_{t}u_{2}-d_{2}\partial_{xx}u_{2}=r_{2}u_{2}-\left(c_{2,1}u_{1}+c_{2,2}u_{2}\right)u_{2}
\end{matrix}\right..\quad\left(E_{KPP}\right)_{0}
\]
Indeed, two-component competition\textendash diffusion systems being
cooperative up to the change of unknowns $v=\frac{r_{2}}{c_{2,2}}-u_{2}$,
the maximum principle then simplifies noticeably the characterization
of the asymptotic behaviors. In particular, defining $\alpha_{i}=\frac{r_{i}}{c_{i,i}}$
for all $i\in\left\{ 1,2\right\} $ and, if $\det\mathbf{C}\neq0$,
\[
\mathbf{v}_{m}=\frac{1}{\det\mathbf{C}}\left(\begin{matrix}r_{1}c_{2,2}-r_{2}c_{1,2}\\
r_{2}c_{1,1}-r_{1}c_{2,1}
\end{matrix}\right),
\]
 the asymptotic behavior of the solutions of the spatially homogeneous
competitive system
\[
\mathbf{u}'=\mathbf{R}\mathbf{u}-\left(\mathbf{C}\mathbf{u}\right)\circ\mathbf{u}
\]
 is well-known.
\begin{enumerate}
\item {[}Extinction of $u_{2}${]} If $\frac{r_{1}}{r_{2}}\geq\max\left(\frac{c_{1,1}}{c_{2,1}},\frac{c_{1,2}}{c_{2,2}}\right)$
and $\frac{r_{1}}{r_{2}}>\min\left(\frac{c_{1,1}}{c_{2,1}},\frac{c_{1,2}}{c_{2,2}}\right)$
, then $\alpha_{1}\mathbf{e}_{1}$ is globally asymptotically stable
in $\mathsf{K}^{++}\cup\left(\text{span}\left(\mathbf{e}_{1}\right)\cap\mathsf{K}^{+}\right)$
and $\alpha_{2}\mathbf{e}_{2}$ is globally asymptotically stable
in $\text{span}\left(\mathbf{e}_{2}\right)\cap\mathsf{K}^{+}$.
\item {[}Coexistence{]} If $\frac{c_{1,2}}{c_{2,2}}<\frac{r_{1}}{r_{2}}<\frac{c_{1,1}}{c_{2,1}}$,
then $\mathbf{v}_{m}\in\mathsf{K}^{++}$, $\mathbf{v}_{m}$ is globally
asymptotically stable in $\mathsf{K}^{++}$ and, for all $i\in\left\{ 1,2\right\} $,
$\alpha_{i}\mathbf{e}_{i}$ is globally asymptotically stable in $\text{span}\left(\mathbf{e}_{i}\right)\cap\mathsf{K}^{+}$.
\item {[}Competitive exclusion{]} If $\frac{c_{1,2}}{c_{2,2}}>\frac{r_{1}}{r_{2}}>\frac{c_{1,1}}{c_{2,1}}$,
then $\mathbf{v}_{m}\in\mathsf{K}^{++}$ and a one-dimensional curve
$\mathsf{S}$, referred to as the separatrix, induces a partition
$\left(\mathsf{K}_{1}^{+},\mathsf{S},\mathsf{K}_{2}^{+}\right)$ of
$\mathsf{K}^{+}$ such that $\alpha_{i}\mathbf{e}_{i}$ is globally
asymptotically stable in $\mathsf{K}_{i}^{+}$ for all $i\in\left\{ 1,2\right\} $
and $\mathbf{v}_{m}$ is globally asymptotically stable in $\mathsf{S}$.
\item {[}Extinction of $u_{1}${]} If $\frac{r_{1}}{r_{2}}\leq\min\left(\frac{c_{1,1}}{c_{2,1}},\frac{c_{1,2}}{c_{2,2}}\right)$
and $\frac{r_{1}}{r_{2}}<\max\left(\frac{c_{1,1}}{c_{2,1}},\frac{c_{1,2}}{c_{2,2}}\right)$,
then $\alpha_{2}\mathbf{e}_{2}$ is globally asymptotically stable
in $\mathsf{K}^{++}\cup\left(\text{span}\left(\mathbf{e}_{2}\right)\cap\mathsf{K}^{+}\right)$
and $\alpha_{1}\mathbf{e}_{1}$ is globally asymptotically stable
in $\text{span}\left(\mathbf{e}_{1}\right)\cap\mathsf{K}^{+}$.
\end{enumerate}
The cases 1, 2 and 4 are \textit{monostable} whereas the case 3 is
\textit{bistable}. The case $\frac{r_{1}}{r_{2}}=\frac{c_{1,1}}{c_{2,1}}=\frac{c_{1,2}}{c_{2,2}}$
is degenerate and is usually discarded. 

In the forthcoming statements, $\eta$ is understood as a positive
parameter which can be passed to the limit $\eta\to0$ (notice that
for all $\eta>0$, $\left(H_{1}\right)$\textendash $\left(H_{5}\right)$
is satisfied indeed). The system $\left(E_{KPP}\right)$ and the objects
$\mathscr{P}_{c}$ and $c^{\star}$ depend on $\eta$ and might be
denoted respectively $\left(E_{KPP}\right)_{\eta}$, $\mathscr{P}_{c,\eta}$
and $c_{\eta}^{\star}$. We define subsequently $\mathscr{E}$ as
the set of all $\left(\eta,\mathbf{p},c\right)\in\left(0,+\infty\right)\times\mathscr{C}^{2}\left(\mathbb{R},\mathbb{R}^{2}\right)\times\left(0,+\infty\right)$
such that $\left(\mathbf{p},c\right)$ is a traveling wave solution
of $\left(E_{KPP}\right)_{\eta}$. Contrarily to the case $\eta>0$,
a traveling wave solution of the limiting system $\left(E_{KPP}\right)_{0}$
has no prescribed asymptotic behaviors. 

We point out that Morris\textendash Börger\textendash Crooks \cite{Morris_Borger_Crooks}
showed that the limit $c_{0}^{\star}$ of $\left(c_{\eta}^{\star}\right)_{\eta>0}$
as $\eta\to0$ is well-defined and satisfies as expected
\[
c_{0}^{\star}\geq2\sqrt{\max\limits _{i\in\left\{ 1,2\right\} }\left(d_{i}r_{i}\right)},
\]
with, quite interestingly, strict inequality if 
\[
1+\sqrt{1+\frac{\alpha_{i}}{\alpha_{3-i}}}<\frac{2c_{3-i,3-i}}{c_{i,3-i}}\text{ and }\frac{d_{i}}{d_{3-i}}+\frac{r_{i}}{r_{3-i}}>2\text{ for all }i\in\left\{ 1,2\right\} .
\]
 However, they did not characterize the limiting profiles. This is
what we intend to do here (but will only partially achieve). 

In the following conjecture, stability is to be understood as local
asymptotic stability with respect to $\left(E_{KPP}^{0}\right)$. 
\begin{conjecture}
\label{conj:(H_7)} Assume $\left(H_{7}\right)$. Let $\left(\mathbf{p}_{\eta}\right)_{\eta>0}$
and $\left(c_{\eta}\right)_{\eta\geq0}$ such that
\[
\left\{ \begin{matrix}\left(\eta,\mathbf{p}_{\eta},c_{\eta}\right)\in\mathscr{E} & \text{for all }\eta>0,\\
c_{0}=\lim\limits _{\eta\to0}c_{\eta}.
\end{matrix}\right.
\]
\begin{enumerate}[label=\roman*)]
\item  \label{enu:con_(H7)_bistable} Assume that both $\alpha_{1}\mathbf{e}_{1}$
and $\alpha_{2}\mathbf{e}_{2}$ are stable and that $c_{\alpha_{1}\mathbf{e}_{1}\to\alpha_{2}\mathbf{e}_{2}}\neq0$.
Then there exists $\left(\xi_{\eta}\right)_{\eta>0}$ such that $\left(\xi\mapsto\mathbf{p}_{\eta}\left(\xi+\xi_{\eta}\right),c_{\eta}\right)_{\eta>0}$
converges in $\left(\mathscr{C}_{loc}^{2}\left(\mathbb{R},\mathbb{R}^{2}\right)\cap\mathscr{L}^{\infty}\left(\mathbb{R},\mathbb{R}^{2}\right)\right)\times\mathbb{R}$
as $\eta\to0$ to a semi-extinct traveling wave solution $\left(p_{0}\mathbf{e}_{i},c_{0}\right)$
of $\left(E_{KPP}\right)_{0}$ connecting $\mathbf{0}$ to $\alpha_{i}\mathbf{e}_{i}$
with
\[
i=\left\{ \begin{matrix}1 & \text{if }c_{\alpha_{1}\mathbf{e}_{1}\to\alpha_{2}\mathbf{e}_{2}}>0,\\
2 & \text{if }c_{\alpha_{1}\mathbf{e}_{1}\to\alpha_{2}\mathbf{e}_{2}}<0.
\end{matrix}\right.
\]
\item \label{enu:con_(H7)_monostable} Assume that there is a unique stable
state $\mathbf{v}_{s}\in\left\{ \alpha_{1}\mathbf{e}_{1},\alpha_{2}\mathbf{e}_{2},\mathbf{v}_{m}\right\} $.
Then one and only one of the following two properties holds true.
\begin{enumerate}
\item There exists $\left(\xi_{\eta}\right)_{\eta>0}$ such that $\left(\xi\mapsto\mathbf{p}_{\eta}\left(\xi+\xi_{\eta}\right),c_{\eta}\right)_{\eta>0}$
converges in $\left(\mathscr{C}_{loc}^{2}\left(\mathbb{R},\mathbb{R}^{2}\right)\cap\mathscr{L}^{\infty}\left(\mathbb{R},\mathbb{R}^{2}\right)\right)\times\mathbb{R}$
as $\eta\to0$ to a component-wise monotonic traveling wave solution
$\left(\mathbf{p}_{0},c_{0}\right)$ of $\left(E_{KPP}\right)_{0}$
connecting $\mathbf{0}$ to $\mathbf{v}_{s}$.
\item There exist $\left(\xi_{\eta}^{1}\right)_{\eta>0}$, $\left(\xi_{\eta}^{2}\right)_{\eta>0}$
and a unique $i\in\mathsf{I}_{u}$ such that, as $\eta\to0$:
\begin{itemize}
\item $\xi_{\eta}^{2}-\xi_{\eta}^{1}\to+\infty$;
\item $\left(\xi\mapsto\mathbf{p}_{\eta}\left(\xi+\xi_{\eta}^{2}\right),c_{\eta}\right)_{\eta>0}$
converges in $\mathscr{C}_{loc}^{2}\left(\mathbb{R},\mathbb{R}^{2}\right)\times\mathbb{R}$
to a semi-extinct traveling wave solution $\left(p_{front}\mathbf{e}_{i},c_{0}\right)$
of $\left(E_{KPP}\right)_{0}$ connecting $\mathbf{0}$ to $\alpha_{i}\mathbf{e}_{i}$;
\item $\left(\xi\mapsto\mathbf{p}_{\eta}\left(\xi+\xi_{\eta}^{1}\right),c_{\eta}\right)_{\eta>0}$
converges in $\mathscr{C}_{loc}^{2}\left(\mathbb{R},\mathbb{R}^{2}\right)\times\mathbb{R}$
to a component-wise monotonic traveling wave solution $\left(\mathbf{p}_{back},c_{0}\right)$
of $\left(E_{KPP}\right)_{0}$ connecting $\alpha_{i}\mathbf{e}_{i}$
to $\mathbf{v}_{s}$.
\end{itemize}
\end{enumerate}
\end{enumerate}
\end{conjecture}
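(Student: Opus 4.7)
The strategy is a compactness-and-limits argument, in the spirit of propagation terrace analyses for scalar multistable reaction\textendash diffusion equations. The first step is to establish uniform-in-$\eta$ a priori estimates on $(\mathbf{p}_\eta)_{\eta>0}$: an $\mathscr{L}^\infty$ bound follows from the superlinearity assumption $(H_4)$ on $\mathbf{c}$, and interior Schauder estimates combined with the uniform bound on $c_\eta$ (coming from $c_\eta\to c_0$) yield a uniform $\mathscr{C}^{2,\alpha}_{loc}$ bound. Consequently, any family of translates $\left(\xi\mapsto\mathbf{p}_\eta(\xi+\xi_\eta)\right)_{\eta>0}$ is relatively compact in $\mathscr{C}^2_{loc}$, and every limit point $\mathbf{p}_0$ is a nonnegative bounded classical solution of the limiting traveling wave equation for $(E_{KPP})_0$, that is, a traveling wave of the pure competition\textendash diffusion system.

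To localize the wave I would fix a small threshold $0<\varepsilon<\min(\alpha_1,\alpha_2)$ and define a front shift $\xi_\eta^f=\sup\{\xi\in\mathbb{R} : |\mathbf{p}_\eta(\xi)|=\varepsilon\}$. Extracting a subsequence, $\mathbf{p}_\eta(\cdot+\xi_\eta^f)$ converges in $\mathscr{C}^2_{loc}$ to some $\mathbf{p}_0$ with $|\mathbf{p}_0(0)|=\varepsilon$ and $\mathbf{p}_0(+\infty)=\mathbf{0}$, the latter being a consequence of the exponential-decay analysis underlying \thmref{Distribution_edge} applied to each component linearized at $\mathbf{0}$. At $\eta=0$ the system becomes cooperative under the change of unknowns $v=\alpha_2-p_{0,2}$, so the strong maximum principle and the sliding method become available: they yield componentwise monotonicity of $\mathbf{p}_0$, and therefore the existence of $\mathbf{p}_0(-\infty)$ as a nonnegative constant equilibrium of the competition ODE.

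For part (i) (the bistable case), the possible values of $\mathbf{p}_0(-\infty)$ are $\alpha_1\mathbf{e}_1$, $\alpha_2\mathbf{e}_2$, and the interior equilibrium $\mathbf{v}_m$; the latter is a saddle, and the standard principle that $-\infty$ limits of cooperative traveling waves must be linearly stable rules it out. Supposing $\mathbf{p}_0(-\infty)=\alpha_j\mathbf{e}_j$, the componentwise monotonicity of the cooperative reformulation forces the $(3-j)$-th component, having identical limits $0$ at both infinities, to vanish identically. A Nagumo-type speed comparison using the bistable front $\alpha_1\mathbf{e}_1\leftrightarrow\alpha_2\mathbf{e}_2$ as a super- or sub-solution then selects the correct $j$: the sign of $c_{\alpha_1\mathbf{e}_1\to\alpha_2\mathbf{e}_2}$ determines which species out-propagates the other, and accordingly which component survives. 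For part (ii) (the monostable case), either $\mathbf{p}_0(-\infty)=\mathbf{v}_s$, in which case we are in situation (a), or $\mathbf{p}_0(-\infty)=\alpha_i\mathbf{e}_i$ for some unstable $i$. In the latter situation I would introduce a second shift $\xi_\eta^b$ tracking the first departure of $\mathbf{p}_\eta$ from a small neighborhood of $\alpha_i\mathbf{e}_i$, extract a second limit $\mathbf{p}_{back}$ along $\xi_\eta^b$, and prove $\xi_\eta^b-\xi_\eta^f\to+\infty$ by contradicting the existence of a single-step connection $\mathbf{0}\to\mathbf{v}_s$ at speed $c_0$.

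The principal obstacle is part (ii)(b): both identifying the unique index $i\in\mathsf{I}_u$ and showing that situation (b) actually arises (rather than (a)) demand a sharp characterization of $c_0$ as the common speed of a two-level propagation terrace of the limit competition system, in the spirit of Giletti's minimal-speed terrace theorem for scalar equations. Transferring that machinery from the scalar to the two-species cooperatively-reformulated setting is delicate, especially because the cooperative structure of the limit system is only valid after a change of variable that breaks nonnegativity; the sliding method must therefore be applied with one component compared above and the other below. Finally, ruling out terraces with more than two plateaus\textemdash plausible in dimension $N=2$\textemdash requires a separate dynamical argument based on the classification of interior equilibria of the competition ODE.
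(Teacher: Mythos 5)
The statement you set out to prove is presented in the paper as a \emph{conjecture}, not a theorem: the author explicitly writes that part i) ``is expected to be a very difficult problem and seems to be beyond our reach'' and leaves it open, and for part ii) only a partial confirmation is proved (\thmref{(H7)_Monostable}, via \propref{(H7)_Monostable_Back}), one which does not construct the second shift $\xi_{\eta}^{2}$, does not establish componentwise monotonicity of the limiting profiles, and does not decide which of the alternative connections actually occurs. There is therefore no proof in the paper to compare against, and your text must be judged on its own; as it stands it is a programme with several genuine gaps rather than a proof.

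Your compactness framework is sound and matches what the paper does for its partial result: uniform $\mathscr{L}^{\infty}$ bounds (cf.\ \lemref{(H7)_Uniform_L_infty_estimates}), elliptic estimates for $\mathscr{C}_{loc}^{2}$ precompactness, and a level-set normalization (the paper uses the unique crossing $\xi_{\rho}\in p_{\eta,i}^{-1}\left(\left\{ \rho\right\} \right)$ from \lemref{(H7)_Monostable_Uniform_estimates_from_below} rather than your $\sup\left\{ \xi\ |\ \left|\mathbf{p}_{\eta}\left(\xi\right)\right|=\varepsilon\right\}$). The later steps do not hold up. First, the sliding argument is circular: to slide on the cooperative reformulation you need the limits of $\mathbf{p}_{0}$ at $\pm\infty$ (or ordered sub/supersolutions at both ends) beforehand, yet you invoke monotonicity precisely to produce the limit at $-\infty$; the paper instead identifies the $-\infty$ limit by comparison with the spatially homogeneous competitive ODE and never claims monotonicity of the limit profile. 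Second, the ``standard principle'' that the $-\infty$ limit must be linearly stable is false in this setting: \subsecref{Delicate_back} exhibits a KPP system with $\mathbf{d}=\mathbf{1}_{2,1}$ admitting a half-line of waves connecting $\mathbf{0}$ to a saddle point, and part ii)(b) of the conjecture itself hinges on connections to the unstable states $\alpha_{i}\mathbf{e}_{i}$. Third, in part i) the selection of the surviving species by a ``Nagumo-type speed comparison'' with the bistable front is exactly the open core of the problem \textemdash{} one would have to compare the non-monotone, $\eta$-dependent two-component wave with a bistable front of the limiting system uniformly as $\eta\to0$, and no mechanism for doing so is supplied. Finally, your closing paragraph concedes that identifying the index $i$, proving $\xi_{\eta}^{2}-\xi_{\eta}^{1}\to+\infty$, and excluding longer terraces remain unresolved; these are not loose ends to be tidied but the substance of the conjecture itself.
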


We emphasize once more that traveling waves with minimal speed $c_{\eta}^{\star}$
do not, in general, converge to a traveling wave with minimal speed.
In particular, \figref{Numerics_bistable_spreading} illustrates an
interesting case of invasion driven by the fast phenotype $u_{2}$
but where the only settler is the slow phenotype $u_{1}$. This is
reminiscent of Griette\textendash Raoul \cite{Griette_Raoul}, where
an analogous result was established analytically under a stronger
scaling. 

\conjref{(H_7)}, \ref{enu:con_(H7)_bistable} is expected to be a
very difficult problem and seems to be beyond our reach. We leave
it as an open problem. 

On the contrary, regarding \conjref{(H_7)}, \ref{enu:con_(H7)_monostable},
a partial confirmation is within reach. On one hand, we point out
that the special case
\[
\frac{c_{1,1}}{c_{2,1}}=\frac{c_{1,2}}{c_{2,2}}=1\text{ and }\mathbf{d}=\mathbf{1}_{2,1}
\]
is somehow solved by \thmref{Distribution_back_(H6)} without any
assumption on $\mathbf{r}$. On the other hand, we also have the following
general theorem which concerns all monostable cases apart from
\[
\frac{c_{1,1}}{c_{2,1}}<\frac{c_{1,2}}{c_{2,2}}=\frac{r_{1}}{r_{2}},
\]
\[
\frac{r_{1}}{r_{2}}=\frac{c_{1,1}}{c_{2,1}}<\frac{c_{1,2}}{c_{2,2}}.
\]
\begin{thm}
\label{thm:(H7)_Monostable} Assume $\left(H_{7}\right)$ and the
existence of $i\in\left\{ 1,2\right\} $ such that
\[
\frac{r_{i}}{r_{3-i}}>\frac{c_{i,3-i}}{c_{3-i,3-i}}.
\]

Let 
\[
\mathbf{v}_{s}=\left\{ \begin{matrix}\alpha_{i}\mathbf{e}_{i} & \text{if }\frac{r_{i}}{r_{3-i}}\geq\frac{c_{i,i}}{c_{3-i,i}},\\
\mathbf{v}_{m} & \text{if }\frac{r_{i}}{r_{3-i}}<\frac{c_{i,i}}{c_{3-i,i}}.
\end{matrix}\right.
\]

For all $\left(\mathbf{p}_{\eta}\right)_{\eta>0}$ and $\left(c_{\eta}\right)_{\eta\geq0}$
such that
\[
\left\{ \begin{matrix}\left(\eta,\mathbf{p}_{\eta},c_{\eta}\right)\in\mathscr{E} & \text{for all }\eta>0,\\
c_{0}=\lim\limits _{\eta\to0}c_{\eta},
\end{matrix}\right.
\]
 there exists $\left(\zeta_{\eta}\right)_{\eta>0}$ such that, as
$\eta\to0$, $\left(\xi\mapsto\mathbf{p}_{\eta}\left(\xi+\zeta_{\eta}\right),c_{\eta}\right)_{\eta>0}$
converges up to extraction in $\mathscr{C}_{loc}^{2}\left(\mathbb{R},\mathbb{R}^{2}\right)\times\mathbb{R}$
to a traveling wave solution $\left(\mathbf{p},c_{0}\right)$ of $\left(E_{KPP}\right)_{0}$
achieving one of the following connections:
\begin{enumerate}
\item $\mathbf{0}$ to $\mathbf{v}_{s}$,
\item $\alpha_{3-i}\mathbf{e}_{3-i}$ to $\mathbf{v}_{s}$,
\item $\mathbf{0}$ to $\alpha_{i}\mathbf{e}_{i}$ with $\mathbf{p}$ semi-extinct.
\end{enumerate}
\end{thm}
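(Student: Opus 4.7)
The plan is to combine uniform a priori bounds, compactness, and a classification of the asymptotic states of the limiting profile using the monostable structure of $(E_{KPP})_0$.

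First I would establish uniform $L^\infty$ bounds on the family $(\mathbf{p}_\eta)_{\eta > 0}$ using $(H_4)$ together with the explicit Lotka\textendash Volterra form $\mathbf{c}(\mathbf{v}) = \mathbf{C}\mathbf{v}$ and the fact that $\eta\mathbf{M}$ remains controlled uniformly for small $\eta$, and then upgrade them to uniform local $\mathscr{C}^{2,\alpha}$ estimates via interior Schauder estimates on the traveling wave ODE. I then choose the shifts by fixing $\varepsilon > 0$ strictly smaller than every positive component of $\mathbf{v}_s$, $\alpha_1\mathbf{e}_1$, $\alpha_2\mathbf{e}_2$, and $\mathbf{v}_m$ (when the latter lies in $\mathsf{K}^{++}$), and setting
\[
\zeta_\eta = \sup \{\xi \in \mathbb{R} : p_\eta^1(\xi) + p_\eta^2(\xi) \geq \varepsilon \}.
\]
This supremum is finite since $\mathbf{p}_\eta(+\infty) = \mathbf{0}$. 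Along a subsequence, $\mathbf{p}_\eta(\cdot + \zeta_\eta)$ then converges in $\mathscr{C}^2_{\mathrm{loc}}$ to a nonnegative bounded solution $\mathbf{p}$ of the TW equation of $(E_{KPP})_0$ at speed $c_0$ satisfying $p^1(0) + p^2(0) = \varepsilon$ and $p^1 + p^2 \leq \varepsilon$ on $[0, +\infty)$.

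The core of the argument is the identification of $\mathbf{p}(\pm\infty)$. Since the $\eta = 0$ system becomes cooperative under the change of variable $\tilde{u}_{3-i} = \alpha_{3-i} - u_{3-i}$, monotone-systems theory applied to bounded solutions of the spatial second-order ODE yields that $\alpha$- and $\omega$-limit sets reduce to equilibria $\mathbf{q}$ of $\mathbf{R}\mathbf{q} = (\mathbf{C}\mathbf{q})\circ\mathbf{q}$, namely $\mathbf{0}$, $\alpha_1\mathbf{e}_1$, $\alpha_2\mathbf{e}_2$, or $\mathbf{v}_m$. The choice of $\varepsilon$ forces $\mathbf{p}(+\infty) = \mathbf{0}$. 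For the limit at $-\infty$, I would rule out two spurious candidates: $\mathbf{0}$ itself, by the classical non-existence of positive pulses for Fisher\textendash KPP systems at positive speed $c_0 \geq c_0^\star$; and $\alpha_{3-i}\mathbf{e}_{3-i}$, using the strict monostable inequality $r_i/r_{3-i} > c_{i,3-i}/c_{3-i,3-i}$, which makes $\alpha_{3-i}\mathbf{e}_{3-i}$ a saddle whose four-dimensional spatial unstable manifold at speed $c_0$ is incompatible with the approach of $\mathbf{p}$. The only surviving possibilities are exactly the three listed connections, with case 3 corresponding to the degenerate semi-extinct sub-case in which the $(3-i)$-th component of the limit collapses identically.

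The main obstacle is this last ruling-out step, and specifically the exclusion of $\mathbf{p}(-\infty) = \alpha_{3-i}\mathbf{e}_{3-i}$. It demands a uniform-in-$\eta$ quantitative lower bound on $p_i^\eta$ far to the left of $\zeta_\eta$ that survives the passage to $\eta \to 0$ despite the mutation term driving the coupling of the two components vanishing in the limit. Establishing this lower bound calls for a delicate spectral analysis of the linearization at $\alpha_{3-i}\mathbf{e}_{3-i}$, combined with a sub-solution argument built from the Perron\textendash Frobenius eigenvector of the linearized operator, designed so as to play the strict monostable inequality against the vanishing mutation rate.
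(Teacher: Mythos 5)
Your normalization is where the argument breaks. Setting $\zeta_\eta=\sup\{\xi\ :\ p_\eta^1(\xi)+p_\eta^2(\xi)\geq\varepsilon\}$ forces the limit to satisfy $p^1+p^2\leq\varepsilon$ on $[0,+\infty)$, hence $\mathbf{p}(+\infty)=\mathbf{0}$, but gives no control on which component carries the mass $\varepsilon$ at $\xi=0$. In the regime where $u_{3-i}$ leads the invasion (e.g. $d_{3-i}r_{3-i}>d_ir_i$), the profile $p_{\eta,3-i}$ develops a plateau near $\alpha_{3-i}$ whose length diverges as $\eta\to0$, on which $p_{\eta,i}$ is positive only through the vanishing mutation coupling; your $\zeta_\eta$ then sits at the right edge of that plateau and the limit is the semi-extinct wave connecting $\mathbf{0}$ to $\alpha_{3-i}\mathbf{e}_{3-i}$, which is not one of the three connections in the statement. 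So the uniform lower bound on $p_{\eta,i}$ to the left of $\zeta_\eta$, which you correctly identify as the crux, is not merely left unproven in your proposal (you defer it to an unspecified spectral/sub-solution analysis): it is \emph{false} for your choice of $\zeta_\eta$. Relatedly, by forcing $\mathbf{p}(+\infty)=\mathbf{0}$ you also exclude the legitimate connection from $\alpha_{3-i}\mathbf{e}_{3-i}$ to $\mathbf{v}_s$ (in the paper's convention, ``connecting $A$ to $B$'' means $A$ at $+\infty$ and $B$ at $-\infty$), so your list of candidate limits does not match the theorem's.

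The paper's fix is elementary and is the key lemma you are missing: evaluating the $i$-th equation at a local minimum of $p_{\eta,i}$ and using $p_{\eta,3-i}\leq\alpha_{3-i}$ together with the strict inequality $\frac{r_i}{r_{3-i}}>\frac{c_{i,3-i}}{c_{3-i,3-i}}$ shows that every local minimum value of $p_{\eta,i}$ exceeds a constant $\rho_i>0$ uniformly in $\eta$ small (\lemref{(H7)_Monostable_Uniform_estimates_from_below}, after Griette--Raoul). One then defines $\zeta_\eta$ as the \emph{unique} point where $p_{\eta,i}=\rho:=\min(\rho_i,v_{s,i})$, with $p_{\eta,i}>\rho$ to its left and $p_{\eta,i}$ decreasing to its right; the bound $p_i\geq\rho$ on $(-\infty,0]$ passes to the limit for free and is what pins the behavior at $-\infty$. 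A second gap: your claim that monotone-systems theory applied to the spatial second-order ODE reduces the $\alpha$- and $\omega$-limit sets to equilibria is not a standard fact for the four-dimensional spatial dynamical system. The paper instead takes limits of translates $\hat{\mathbf{p}}=\lim\mathbf{p}(\cdot+\xi_n)$ with $\xi_n\to-\infty$, observes that $\hat{\mathbf{p}}$ is a stationary solution of the parabolic system in the moving frame with $\hat p_i\geq\rho$, and squeezes it against the spatially homogeneous solution issued from $(\rho,\sup\hat p_j)$ via the competitive comparison principle; the $+\infty$ end is handled by the monotonicity of $p_i$ and a local-minimum argument for $p_j$. Finally, the semi-extinction in case 3 and the impossibility of a connection from $\alpha_{3-i}\mathbf{e}_{3-i}$ to $\alpha_i\mathbf{e}_i$ are obtained by an integrating-factor argument on the $p_j$ equation, not by a count of unstable-manifold dimensions.
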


This result is proved in Section 4.

Let us clarify how this result confirms partially \conjref{(H_7)},
\ref{enu:con_(H7)_monostable} and what are the remaining open questions. 
\begin{itemize}
\item Assume $\mathbf{v}_{s}=\mathbf{v}_{m}$. Up to the component-wise
monotonicity of the profile in the first and second cases, the three
connections above correspond exactly to the three possible limiting
profiles of \conjref{(H_7)}, \ref{enu:con_(H7)_monostable}. Moreover
we can apply the theorem with $i=1$ and $i=2$ and obtain two limiting
profiles. However, at this point, the normalizations $\left(\zeta_{\eta}^{1}\right)_{\eta>0}$
and $\left(\zeta_{\eta}^{2}\right)_{\eta>0}$ are unrelated and nine
possible pairs of profiles seem to exist. We do not know how to prove
that only the three following situations actually occur: $\mathbf{0}$
to $\mathbf{v}_{m}$ and $\mathbf{0}$ to $\mathbf{v}_{m}$ with $\left(\zeta_{\eta}^{2}-\zeta_{\eta}^{1}\right)_{\eta>0}$
bounded, semi-extinct $\mathbf{0}$ to $\alpha_{1}\mathbf{e}_{1}$
and $\alpha_{1}\mathbf{e}_{1}$ to $\mathbf{v}_{m}$ with $\zeta_{\eta}^{2}-\zeta_{\eta}^{1}\to-\infty$,
semi-extinct $\mathbf{0}$ to $\alpha_{2}\mathbf{e}_{2}$ and $\alpha_{2}\mathbf{e}_{2}$
to $\mathbf{v}_{m}$ with $\zeta_{\eta}^{2}-\zeta_{\eta}^{1}\to+\infty$.
\item Assume $\mathbf{v}_{s}=\alpha_{i}\mathbf{e}_{i}$. The third connection
above is actually a subcase of the first one and the normalization
$\left(\zeta_{\eta}\right)_{\eta>0}$ is unable to track the semi-extinct
limiting profile connecting $\mathbf{0}$ to $\alpha_{3-i}\mathbf{e}_{3-i}$.
This is not a question of optimality of the proof: the normalization
$\left(\zeta_{\eta}\right)_{\eta>0}$ is precisely chosen so that
$p_{i}$ is always non-zero. Hence $\left(\zeta_{\eta}\right)_{\eta>0}$
corresponds either to $\left(\xi_{\eta}\right)_{\eta>0}$ or to $\left(\xi_{\eta}^{1}\right)_{\eta>0}$.
The construction of the normalization $\left(\xi_{\eta}^{2}\right)_{\eta>0}$
of \conjref{(H_7)}, \ref{enu:con_(H7)_monostable} is a completely
open problem. Of course, once this problem is solved, it remains to
relate the limiting profiles and the normalizations, as in the case
$\mathbf{v}_{s}=\mathbf{v}_{m}$.
\end{itemize}

\section{The edge of the fronts}

In this section, we fix a traveling wave $\left(\mathbf{p},c\right)$
and we prove \thmref{Distribution_edge}.

\subsection{Preparatory lemmas and the Ikehara theorem}
\begin{lem}
\label{lem:Exponential_decay_of_the_profile} For all $i\in\left[N\right]$,
\[
\left\{ \liminf_{+\infty}\frac{-p_{i}'}{p_{i}},\limsup_{+\infty}\frac{-p_{i}'}{p_{i}}\right\} \subset\left\{ \mu\in\left(0,+\infty\right)\ |\ \frac{\lambda_{PF}\left(\mu^{2}\mathbf{D}+\mathbf{L}\right)}{\mu}=c\right\} ,
\]
\[
\left\{ \liminf_{+\infty}\frac{p_{i}''}{p_{i}},\limsup_{+\infty}\frac{p_{i}''}{p_{i}}\right\} \subset\left\{ \mu^{2}\in\left(0,+\infty\right)\ |\ \frac{\lambda_{PF}\left(\mu^{2}\mathbf{D}+\mathbf{L}\right)}{\mu}=c\right\} .
\]

Consequently, there exists $\tilde{\xi}\in\mathbb{R}$ such that $\mathbf{p}$
is component-wise strictly convex in $[\tilde{\xi},+\infty)$.
\end{lem}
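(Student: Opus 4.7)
The plan is to linearize the profile equation at $+\infty$ and combine this with a rescaling/compactness argument. Since $\mathbf{c}\in\mathscr{C}^{1}$ with $\mathbf{c}(\mathbf{0})=\mathbf{0}$ and $\mathbf{p}(\xi)\to\mathbf{0}$, one has $\mathbf{c}[\mathbf{p}(\xi)]\to\mathbf{0}$, so $(TW[c])$ degenerates asymptotically into the linear constant-coefficient system $-\mathbf{D}\mathbf{p}''-c\mathbf{p}'=\mathbf{L}\mathbf{p}$, up to a componentwise $o(p_{i})$ perturbation. Before passing to the limit I would first establish that $\mathbf{p}\gg\mathbf{0}$ and that all ratios $p_{j}/p_{i}$ stay bounded for $\xi$ large. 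This follows from the strong maximum principle applied to the asymptotically cooperative system $-\mathbf{D}\mathbf{p}''-c\mathbf{p}'-\mathbf{L}\mathbf{p}+\mathbf{c}[\mathbf{p}]\circ\mathbf{p}=\mathbf{0}$, combined with the irreducibility of $\mathbf{L}$ coming from $(H_{1})$ and a Harnack-type estimate. Standard Schauder estimates applied to shifts of $\mathbf{p}$ then yield uniform $\mathscr{C}^{2}$ bounds, and in particular pointwise bounds on $p_{i}'/p_{i}$ and $p_{i}''/p_{i}$ for $\xi$ large.

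The core step is then a rescaling argument. Given any sequence $\xi_{n}\to+\infty$, set $\mathbf{q}_{n}(\xi)=\mathbf{p}(\xi+\xi_{n})/|\mathbf{p}(\xi_{n})|$. The bounds above make $(\mathbf{q}_{n})_{n}$ relatively compact in $\mathscr{C}_{loc}^{2}(\mathbb{R},\mathbb{R}^{N})$ and componentwise uniformly positive (all ratios remaining bounded away from $0$ and $+\infty$), while $\mathbf{q}_{n}$ satisfies the same equation with a reaction term vanishing as $n\to+\infty$. Extracting a subsequence yields a positive, locally bounded limit $\mathbf{q}_{\infty}$ solving $-\mathbf{D}\mathbf{q}_{\infty}''-c\mathbf{q}_{\infty}'=\mathbf{L}\mathbf{q}_{\infty}$ on the whole of $\mathbb{R}$.

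The heart of the argument is the classification of such solutions. Every solution of this linear constant-coefficient system is a combination of modes $\xi^{k}e^{-\mu\xi}\mathbf{n}$ with $\mu\in\mathbb{C}$ a root of $\det(\mu^{2}\mathbf{D}+\mathbf{L}-c\mu\mathbf{I})=0$ and $\mathbf{n}$ a corresponding (generalized) eigenvector. A cone argument relying on the Perron--Frobenius structure of $\mu^{2}\mathbf{D}+\mathbf{L}$ for each real $\mu>0$ shows that componentwise positivity on all of $\mathbb{R}$ forces the decomposition to collapse to a single real exponential mode $\mathbf{q}_{\infty}(\xi)=Ae^{-\mu\xi}\mathbf{n}_{\mu}$ with $\mu>0$ satisfying $\lambda_{PF}(\mu^{2}\mathbf{D}+\mathbf{L})=c\mu$. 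In particular $-q_{\infty,i}'/q_{\infty,i}\equiv\mu$ and $q_{\infty,i}''/q_{\infty,i}\equiv\mu^{2}$. Picking $(\xi_{n})_{n}$ so that $-p_{i}'(\xi_{n})/p_{i}(\xi_{n})$ converges to $\liminf_{+\infty}(-p_{i}'/p_{i})$, resp. to $\limsup_{+\infty}(-p_{i}'/p_{i})$, the corresponding limit $\mu$ must coincide with this liminf (resp. limsup), which yields the first inclusion; the same argument applied to $p_{i}''/p_{i}$ gives the second.

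The strict convexity statement is then immediate: the set of admissible $\mu$ consists of one or two strictly positive values ($\mu_{c^{\star}}$, or $\mu_{1,c}$ and $\mu_{2,c}$), so $\liminf_{+\infty}p_{i}''/p_{i}>0$ for every $i$, whence $p_{i}''>0$ on $[\tilde{\xi},+\infty)$ for a common $\tilde{\xi}$. The main obstacle I expect is the classification step: proving that a positive, locally bounded solution on the whole of $\mathbb{R}$ of the linear system must reduce to a single Perron--Frobenius exponential mode is delicate, since generic linear combinations of characteristic modes can be componentwise positive on a half-line without being positive on $\mathbb{R}$. Handling this properly requires carefully exploiting the positivity of $\mathbf{n}_{\mu}$ and the uniqueness up to scaling of positive eigenvectors of essentially nonnegative irreducible matrices.
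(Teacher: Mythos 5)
Your overall strategy (shift, rescale by $\left|\mathbf{p}\left(\xi_{n}\right)\right|$, extract a positive entire solution of the limiting linear system $-\mathbf{D}\mathbf{q}''-c\mathbf{q}'=\mathbf{L}\mathbf{q}$) is the same as the one the paper imports from \cite[Proposition 6.10]{Girardin_2016_2}. The gap is exactly in the step you flag as the main obstacle: the classification is false as you state it. When $c>c^{\star}$ the characteristic equation has two real roots $0<\mu_{1,c}<\mu_{2,c}$, each with a \emph{positive} Perron--Frobenius eigenvector, and
\[
\mathbf{q}\left(\xi\right)=A\text{e}^{-\mu_{1,c}\xi}\mathbf{n}_{\mu_{1,c}}+B\text{e}^{-\mu_{2,c}\xi}\mathbf{n}_{\mu_{2,c}},\qquad A>0,\ B>0,
\]
is component-wise positive on all of $\mathbb{R}$, locally bounded, solves the linear system, and is not a single mode; along it $-q_{i}'/q_{i}$ takes every value in $\left(\mu_{1,c},\mu_{2,c}\right)$. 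So no cone argument based on positivity over the whole line can collapse the limit to one mode, and your argument as written cannot exclude that $\liminf_{+\infty}\left(-p_{i}'/p_{i}\right)$ lies strictly between the two admissible rates.

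What rescues the argument --- and what the paper's proof actually uses --- is the extra information that the blow-up point is a \emph{global extremum} of the logarithmic derivative. Setting $\overline{\Lambda}=\max_{i}\limsup_{+\infty}p_{i}'/p_{i}$ and centering the sequence where this extremum is asymptotically attained, the auxiliary function $\mathbf{w}=\overline{\Lambda}\mathbf{q}_{\infty}-\mathbf{q}_{\infty}'$ is again a solution of the same constant-coefficient cooperative system, is nonnegative by extremality of $\overline{\Lambda}$, and vanishes in one component at $\xi=0$; the strong maximum principle for the weakly and fully coupled operator $\mathbf{D}\frac{\text{d}^{2}}{\text{d}\xi^{2}}+c\frac{\text{d}}{\text{d}\xi}+\mathbf{L}$ then forces $\mathbf{w}\equiv\mathbf{0}$, i.e. $\mathbf{q}_{\infty}$ is exactly one exponential mode with a positive direction vector, which identifies $-\overline{\Lambda}$ as a solution of $\lambda_{PF}\left(\mu^{2}\mathbf{D}+\mathbf{L}\right)=c\mu$. (The two-mode counterexample above is consistent with this: its logarithmic derivative is strictly monotone and has no interior extremum.) The same device with $\underline{\Theta}\mathbf{q}_{\infty}-\mathbf{q}_{\infty}''$ and $\overline{\Theta}\mathbf{q}_{\infty}-\mathbf{q}_{\infty}''$ handles the second-derivative ratios. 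Your deduction of strict convexity from the second inclusion is correct once the inclusions are established, but as it stands the central identification step needs to be replaced by the extremality/maximum-principle argument.
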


\begin{proof}
The proof of
\[
\min\limits _{i\in\left[N\right]}\liminf\limits _{+\infty}\frac{-p_{i}'}{p_{i}}\in\left\{ \mu>0\ |\ \frac{\lambda_{PF}\left(\mu^{2}\mathbf{D}+\mathbf{L}\right)}{\mu}=c\right\} 
\]
can be found in \cite[Proposition 6.10]{Girardin_2016_2}. The proof
also directly yields that for any sequence $\left(\xi_{n}\right)_{n\in\mathbb{N}}$
such that $\xi_{n}\to+\infty$ and such that there exists $j\in\left[N\right]$
satisfying
\[
\lim\limits _{n\to+\infty}\frac{-p_{j}'\left(\xi_{n}\right)}{p_{j}\left(\xi_{n}\right)}=\min\limits _{i\in\left[N\right]}\liminf\limits _{+\infty}\frac{-p_{i}'}{p_{i}},
\]
convergence occurs in the following sense:
\[
\lim\limits _{n\to+\infty}\left(\frac{-p_{i}'\left(\xi_{n}\right)}{p_{i}\left(\xi_{n}\right)}\right)_{i\in\left[N\right]}=\left(\min\limits _{i\in\left[N\right]}\liminf\limits _{+\infty}\frac{-p_{i}'}{p_{i}}\right)\mathbf{1}_{N,1}.
\]

The proof of 
\[
\max\limits _{i\in\left[N\right]}\limsup\limits _{+\infty}\frac{-p_{i}'}{p_{i}}\in\left\{ \mu>0\ |\ \frac{\lambda_{PF}\left(\mu^{2}\mathbf{D}+\mathbf{L}\right)}{\mu}=c\right\} 
\]
is a slight modification of the preceding proof, where the quantity
\[
\overline{\Lambda}=\max_{i\in\left[N\right]}\limsup_{\xi\to+\infty}\frac{p_{i}'\left(\xi\right)}{p_{i}\left(\xi\right)}
\]
is replaced by
\[
\underline{\Lambda}=\min_{i\in\left[N\right]}\liminf_{\xi\to+\infty}\frac{p_{i}'\left(\xi\right)}{p_{i}\left(\xi\right)}.
\]
 Similarly, we also obtain directly that for any sequence $\left(\xi_{n}\right)_{n\in\mathbb{N}}$
such that $\xi_{n}\to+\infty$ and such that there exists $j\in\left[N\right]$
satisfying
\[
\lim\limits _{n\to+\infty}\frac{-p_{j}'\left(\xi_{n}\right)}{p_{j}\left(\xi_{n}\right)}=\max\limits _{i\in\left[N\right]}\limsup\limits _{+\infty}\frac{-p_{i}'}{p_{i}},
\]
convergence occurs in the following sense:
\[
\lim\limits _{n\to+\infty}\left(\frac{-p_{i}'\left(\xi_{n}\right)}{p_{i}\left(\xi_{n}\right)}\right)_{i\in\left[N\right]}=\left(\max\limits _{i\in\left[N\right]}\limsup\limits _{+\infty}\frac{-p_{i}'}{p_{i}}\right)\mathbf{1}_{N,1}.
\]

The statements regarding $\left(\frac{p_{i}''}{p_{i}}\right)_{i\in\left[N\right]}$
are again established very similarly. The quantity
\[
\overline{\Lambda}=\max_{i\in\left[N\right]}\limsup_{\xi\to+\infty}\frac{p_{i}'\left(\xi\right)}{p_{i}\left(\xi\right)}
\]
is replaced by
\[
\underline{\Theta}=\min_{i\in\left[N\right]}\liminf_{\xi\to+\infty}\frac{p_{i}''\left(\xi\right)}{p_{i}\left(\xi\right)}
\]
and 
\[
\overline{\Theta}=\max_{i\in\left[N\right]}\liminf_{\xi\to+\infty}\frac{p_{i}''\left(\xi\right)}{p_{i}\left(\xi\right)}
\]
 respectively, and the function 
\[
\mathbf{w}_{n}=\overline{\Lambda}\hat{\mathbf{p}}_{n}-\hat{\mathbf{p}}_{n}'
\]
 is replaced by
\[
\mathbf{w}_{n}=\underline{\Theta}\hat{\mathbf{p}}_{n}-\hat{\mathbf{p}}_{n}''
\]
and
\[
\mathbf{w}_{n}=\overline{\Theta}\hat{\mathbf{p}}_{n}-\hat{\mathbf{p}}_{n}''
\]
 respectively. Since $\hat{\mathbf{p}}_{\infty}$ is nonnegative nonzero
and $\mathbf{w}_{\infty}=\mathbf{0}$, necessarily $\underline{\Theta}>0$
and $\overline{\Theta}>0$ and then, as in \cite[Proposition 6.10]{Girardin_2016_2},
both quantities have the form $\mu^{2}$ with $\mu$ solution of $\frac{\lambda_{PF}\left(\mu^{2}\mathbf{D}+\mathbf{L}\right)}{\mu}=c$. 

Finally, the strict convexity in a neighborhood of $+\infty$ is deduced
exactly as the monotonicity in the proof of \cite[Proposition 6.10]{Girardin_2016_2}.
\end{proof}
We will also need the Ikehara theorem \cite[Proposition 2.3]{Carr_Chmaj},
commonly used in such problems (see for instance Guo\textendash Wu
\cite{Jong_Shenq12}), as well as a lemma due to Volpert, Volpert
and Volpert \cite[Chapter 5, Lemma 4.1]{V_V_V}.
\begin{thm}
{[}Ikehara{]} Let $f:\left(0,+\infty\right)\to\left(0,+\infty\right)$
be a decreasing function. Assume that there exist $\overline{\lambda}\in\left(0,+\infty\right)$,
$k\in\left(-1,+\infty\right)$ and an analytic function 
\[
h:\left(0,\overline{\lambda}\right]+i\mathbb{R}\to\left(0,+\infty\right)
\]
such that
\[
\int_{0}^{+\infty}\text{e}^{\lambda x}f\left(x\right)\text{d}x=\frac{h\left(\lambda\right)}{\left(\overline{\lambda}-\lambda\right)^{k+1}}\text{ for all }\lambda\in\left(0,\overline{\lambda}\right).
\]

Then
\[
\lim_{x\to+\infty}f\left(x\right)\frac{\text{e}^{\overline{\lambda}x}}{x^{k}}=\frac{h\left(\overline{\lambda}\right)}{\Gamma\left(\overline{\lambda}+1\right)}.
\]
\end{thm}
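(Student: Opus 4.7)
The plan is to follow the classical proof of the Wiener--Ikehara Tauberian theorem, adapted to the higher-order singularity $(\overline{\lambda}-\lambda)^{-(k+1)}$ and to the continuous setting. The guiding observation is that the function $x\mapsto x^{k}\text{e}^{-\overline{\lambda}x}$ has Laplace transform equal to $\Gamma(k+1)/(\overline{\lambda}-\lambda)^{k+1}$, so the target asymptotic $f(x)\sim h(\overline{\lambda})\Gamma(k+1)^{-1}x^{k}\text{e}^{-\overline{\lambda}x}$ is precisely what one gets by matching the principal parts of Laplace transforms at $\lambda=\overline{\lambda}$.

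First, I would introduce the renormalized function $F(x)=x^{-k}\text{e}^{\overline{\lambda}x}f(x)$ and rewrite the hypothesis as saying that the Laplace transform of $f-h(\overline{\lambda})\Gamma(k+1)^{-1}x^{k}\text{e}^{-\overline{\lambda}x}$ (with an appropriate cutoff near $0$ ensuring local integrability) extends analytically across the line $\operatorname{Re}\lambda=\overline{\lambda}$ to a neighborhood of the closed strip $\{0\leq\operatorname{Re}\lambda\leq\overline{\lambda}\}$. This step relies crucially on the hypothesis that $h$ is analytic on $(0,\overline{\lambda}]+i\mathbb{R}$, which guarantees that the only singularity of $h(\lambda)/(\overline{\lambda}-\lambda)^{k+1}$ on the critical line is the known pole of order $k+1$ at $\overline{\lambda}$.

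Next, I would apply a Fourier-analytic argument. For a smooth, rapidly decreasing test function $\varphi$ whose Fourier transform is compactly supported (a Fej\'er-type kernel), Parseval's identity together with the Riemann--Lebesgue lemma yields
\[
\int_{\mathbb{R}}F(x+y)\varphi(y)\text{d}y\longrightarrow\frac{h(\overline{\lambda})}{\Gamma(k+1)}\int_{\mathbb{R}}\varphi(y)\text{d}y\quad\text{as }x\to+\infty.
\]
The monotonicity of $f$ then supplies the Tauberian condition: since $f$ is decreasing, two-sided bounds $f(x+h)\leq f(x)\leq f(x-h)$ for $h>0$ allow one to replace the convolution on the left-hand side by the pointwise value $F(x)$ when $\varphi$ approximates a Dirac mass, yielding the claimed pointwise limit.

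The main obstacle is the higher-order singularity when $k>0$: the standard Wiener--Ikehara theorem treats only $k=0$, and the generalization requires either iterating the subtraction of principal parts until the remaining singularity is integrable, or introducing a fractional (Riemann--Liouville) primitive of $f$ to reduce the exponent, then differentiating back using monotonicity. Handling non-integer $k$ and tracking the precise constant $h(\overline{\lambda})/\Gamma(k+1)$ through this reduction is the technical heart of the argument; a detailed treatment is given in \cite[Proposition 2.3]{Carr_Chmaj}.
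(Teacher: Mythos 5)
The first thing to say is that the paper does not prove this statement at all: it is imported verbatim as \cite[Proposition 2.3]{Carr_Chmaj} and used as a black box in the proof of \propref{Edge}. So there is no in-paper argument to compare yours against, and your proposal must stand on its own as a proof. It does not, for the reason you yourself flag in your final paragraph: the entire difficulty of the statement lies in the singularity of order $k+1$ with $k$ possibly non-integer, and your outline defers exactly that point to ``a detailed treatment\ldots in \cite{Carr_Chmaj}'' --- i.e.\ to the very reference the theorem is quoted from. The first three paragraphs are a correct description of the classical Wiener--Ikehara mechanism (subtraction of the principal part, Fej\'er-kernel convolution plus Parseval and Riemann--Lebesgue, monotonicity of $f$ as the Tauberian hypothesis), but as written the subtraction step is only valid when the residual singularity after removing $h(\overline{\lambda})\Gamma(k+1)^{-1}(\overline{\lambda}-\lambda)^{-(k+1)}$ is integrable on the critical line, which fails for general $k>0$; the iteration or fractional-integration device you mention in passing is precisely the content that would need to be carried out, with the constant tracked through it. A proof that ends by citing the source of the statement is a citation, not a proof.

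One substantive point in your favor: your Laplace-transform computation
\[
\int_{0}^{+\infty}\text{e}^{\lambda x}\,x^{k}\text{e}^{-\overline{\lambda}x}\,\text{d}x=\frac{\Gamma\left(k+1\right)}{\left(\overline{\lambda}-\lambda\right)^{k+1}}
\]
correctly identifies the limiting constant as $h\left(\overline{\lambda}\right)/\Gamma\left(k+1\right)$, whereas the statement as reproduced in the paper (following \cite{Carr_Chmaj}) reads $h\left(\overline{\lambda}\right)/\Gamma\left(\overline{\lambda}+1\right)$. This is a typo inherited from the source; it is harmless for the application in Section 2, where only the positivity of the resulting constant $A$ is used, but you are right to normalize by $\Gamma\left(k+1\right)$.
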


\begin{lem}
\label{lem:Volperts} {[}Volpert\textendash Volpert\textendash Volpert{]}
Let $\mathbf{A}$ be an essentially nonnegative matrix and let $\mathbf{z}\in\mathbb{C}^{N}$. 

If 
\[
\left\{ \begin{matrix}\text{sp}\mathbf{A}\subset\left(-\infty,0\right)+i\mathbb{R},\\
\left(\text{Re}\left(z_{k}\right)\right)_{k\in\left[N\right]}\leq\mathbf{0},
\end{matrix}\right.
\]
then
\[
\text{sp}\left(\mathbf{A}+\text{diag}\left(\mathbf{z}\right)\right)\subset\left(-\infty,0\right)+i\mathbb{R}.
\]
\end{lem}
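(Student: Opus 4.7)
The strategy is an absolute-value majorization combined with M-matrix monotonicity. First, I would reduce to a nonsingularity statement: if some $\mu\in\text{sp}(\mathbf{A}+\text{diag}(\mathbf{z}))$ satisfies $\text{Re}(\mu)\geq 0$, then replacing $\mathbf{z}$ by $\tilde{\mathbf{z}}=\mathbf{z}-\mu\mathbf{1}_{N,1}$ preserves the sign hypothesis $\bigl(\text{Re}(\tilde z_k)\bigr)_{k\in[N]}\leq\mathbf{0}$ while $0$ becomes an eigenvalue of $\mathbf{A}+\text{diag}(\tilde{\mathbf{z}})$. Hence it suffices to prove that, under the lemma's assumptions, $\mathbf{A}+\text{diag}(\mathbf{z})$ is nonsingular.

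A preliminary structural observation is that the spectral hypothesis forces $a_{ii}<0$ for every $i\in[N]$. Writing $\mathbf{A}=\mathbf{B}-\gamma\mathbf{I}_N$ with $\gamma=-\min_{i\in[N]}a_{ii}$ and $\mathbf{B}\geq\mathbf{0}$, the spectral abscissa of $\mathbf{A}$ equals $\rho(\mathbf{B})-\gamma$, so $\text{sp}(\mathbf{A})\subset(-\infty,0)+i\mathbb{R}$ reads $\rho(\mathbf{B})<\gamma$; combined with the elementary bound $b_{ii}\leq\rho(\mathbf{B})$ (a consequence of $\mathbf{B}^n\mathbf{e}_i\geq_N b_{ii}^n\mathbf{e}_i$ together with Gelfand's formula), this yields $a_{ii}=b_{ii}-\gamma<0$.

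Assume for contradiction that $\mathbf{v}\in\mathbb{C}^N\setminus\{\mathbf{0}\}$ satisfies $(\mathbf{A}+\text{diag}(\mathbf{z}))\mathbf{v}=\mathbf{0}$. The $i$-th component reads $(a_{ii}+z_i)v_i=-\sum_{j\neq i}a_{ij}v_j$, and the triangle inequality combined with $a_{ij}\geq 0$ for $j\neq i$ yields
\[
|a_{ii}+z_i|\,|v_i|\leq\sum_{j\neq i}a_{ij}|v_j|.
\]
The crucial lower bound is $|a_{ii}+z_i|\geq|a_{ii}|$: since $a_{ii}+\text{Re}(z_i)\leq a_{ii}<0$, one has $|a_{ii}+z_i|^2\geq(a_{ii}+\text{Re}(z_i))^2\geq a_{ii}^2$. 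Setting $\mathbf{w}=(|v_i|)_{i\in[N]}$, the inequality rewrites as $(\mathbf{A}\mathbf{w})_i\geq 0$ for each $i$, that is $\mathbf{A}\mathbf{w}\geq_N\mathbf{0}$.

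To close the argument, I would invoke the M-matrix property of $-\mathbf{A}$: since $\rho(\gamma^{-1}\mathbf{B})<1$, the Neumann series
\[
(-\mathbf{A})^{-1}=\gamma^{-1}\sum_{k\geq 0}(\gamma^{-1}\mathbf{B})^k
\]
converges and is entrywise nonnegative. Applying $(-\mathbf{A})^{-1}$ to both sides of $-\mathbf{A}\mathbf{w}\leq_N\mathbf{0}$ yields $\mathbf{w}\leq_N\mathbf{0}$, hence $\mathbf{w}=\mathbf{0}$ and $\mathbf{v}=\mathbf{0}$, a contradiction. The only delicate point of the argument is the lower bound $|a_{ii}+z_i|\geq|a_{ii}|$, which simultaneously requires $a_{ii}<0$ (extracted from the spectral hypothesis on $\mathbf{A}$) and $\text{Re}(z_i)\leq 0$; neither hypothesis alone would suffice, and the rest of the proof is a rather standard combination of Perron--Frobenius and M-matrix arguments.
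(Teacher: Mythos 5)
Your proof is correct. Note first that the paper itself gives no proof of this lemma: it is quoted verbatim from Volpert--Volpert--Volpert \cite[Chapter 5, Lemma 4.1]{V_V_V}, so there is no in-paper argument to compare against. Your self-contained argument is the standard one for this kind of statement and every step checks out: the translation $\tilde{\mathbf{z}}=\mathbf{z}-\mu\mathbf{1}_{N,1}$ legitimately reduces the claim to nonsingularity because $\text{Re}\left(\mu\right)\geq0$ preserves the sign condition on $\tilde{\mathbf{z}}$; the deduction $a_{i,i}\leq\rho\left(\mathbf{B}\right)-\gamma<0$ correctly uses that the spectral abscissa of a nonnegative matrix equals its spectral radius (Perron--Frobenius without irreducibility, which is the right version here since the lemma does not assume $\mathbf{A}$ irreducible); the bound $\left|a_{i,i}+z_{i}\right|\geq\left|a_{i,i}\right|$ is exactly where both hypotheses enter, as you point out; and the conclusion $\mathbf{A}\mathbf{w}\geq_{N}\mathbf{0}$ with $\mathbf{w}=\left(\left|v_{i}\right|\right)_{i\in\left[N\right]}$ combined with the entrywise nonnegativity of $\left(-\mathbf{A}\right)^{-1}$ (Neumann series, valid since $\rho\left(\gamma^{-1}\mathbf{B}\right)<1$ and $\gamma>0$ follows from $a_{i,i}<0$) forces $\mathbf{w}=\mathbf{0}$. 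This gives the paper a proof it only referenced, at the cost of nothing beyond elementary M-matrix theory.
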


\subsection{Convergence at the edge}

Let 
\[
k_{c}=\left\{ \begin{matrix}0 & \text{if }c>c^{\star},\\
1 & \text{if }c=c^{\star}.
\end{matrix}\right.
\]
\begin{prop}
\label{prop:Edge} There exists $A>0$ such that, as $\xi\to+\infty$,
\[
\left\{ \begin{matrix}\mathbf{p}\left(\xi\right)\sim A\xi^{k_{c}}\text{e}^{-\mu_{c}\xi}\mathbf{n}_{\mu_{c}},\\
\mathbf{p}'\left(\xi\right)\sim-\mu_{c}\mathbf{p}\left(\xi\right),\\
\mathbf{p}''\left(\xi\right)\sim\mu_{c}^{2}\mathbf{p}\left(\xi\right).
\end{matrix}\right.
\]
\end{prop}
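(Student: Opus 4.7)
The plan, following the strategy announced in the introduction, is to combine \lemref{Exponential_decay_of_the_profile}, the unilateral Laplace transform, and the Ikehara Tauberian theorem.

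\textbf{Setup.} First, I would use \lemref{Exponential_decay_of_the_profile} to pick $\xi_0$ large enough that each $p_i$ is positive, strictly decreasing and strictly convex on $[\xi_0,+\infty)$; that lemma also forces the exponential decay rate $\nu$ of $\mathbf{p}$ at $+\infty$ to be $\mu_c$ or, if $c>c^\star$, $\mu_{2,c}$. I would then set $P_i(\lambda):=\int_{\xi_0}^{+\infty}e^{\lambda\xi}p_i(\xi)\,d\xi$ and, integrating $\left(TW\left[c\right]\right)$ twice by parts against $e^{\lambda\xi}$, derive for $\mathrm{Re}(\lambda)<\nu$ the algebraic identity
\[
\mathbf{G}(\lambda)\mathbf{P}(\lambda)=\mathbf{Q}(\lambda)+e^{\lambda\xi_0}\bigl[\mathbf{D}\mathbf{p}'(\xi_0)+(c\mathbf{I}_N-\lambda\mathbf{D})\mathbf{p}(\xi_0)\bigr],
\]
with $\mathbf{G}(\lambda):=\lambda^2\mathbf{D}+\mathbf{L}-c\lambda\mathbf{I}_N$ and $\mathbf{Q}(\lambda):=\int_{\xi_0}^{+\infty}e^{\lambda\xi}\mathbf{c}[\mathbf{p}]\circ\mathbf{p}\,d\xi$. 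The hypotheses $\mathbf{c}(\mathbf{0})=\mathbf{0}$ and $\mathbf{c}\in\mathscr{C}^1$ make the integrand of $\mathbf{Q}$ an $O(|\mathbf{p}|^2)$, so $\mathbf{Q}$ extends holomorphically to $\{\mathrm{Re}(\lambda)<2\nu\}$, an open set containing a neighborhood of $\mu_c+i\mathbb{R}$.

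\textbf{Meromorphic extension via $\mathbf{G}^{-1}$.} Next I would extend $\mathbf{P}$, a priori defined on $\{\mathrm{Re}(\lambda)<\nu\}$, past the line $\mathrm{Re}(\lambda)=\mu_c$ by writing $\mathbf{P}(\lambda)=\mathbf{G}(\lambda)^{-1}(\mathbf{Q}(\lambda)+\mathbf{B}(\lambda))$ wherever $\mathbf{G}(\lambda)$ is invertible. On the real axis, $\det\mathbf{G}(\alpha)=0$ forces $c\alpha=\lambda_{PF}(\alpha^2\mathbf{D}+\mathbf{L})$ by the Perron\textendash Frobenius structure of $\alpha^2\mathbf{D}+\mathbf{L}$, so on $[0,\mu_c]$ the only zero is at $\alpha=\mu_c$ and is of order $k_c+1$ for the dominant branch. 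Off the real axis, at $\lambda=\mu_c+i\beta$ with $\beta\neq 0$, I would decompose $\mathbf{G}(\lambda)-\mathbf{G}(\mu_c)=\mathrm{diag}(\mathbf{z})$ with $\mathrm{Re}(z_k)=-\beta^2 d_k<0$ and invoke \lemref{Volperts} (after a small leftward shift of $\mathbf{G}(\mu_c)$ to place its zero simple eigenvalue strictly in the left half-plane) to conclude invertibility. This makes $\mathbf{P}$ meromorphic on an open neighborhood of $\{\mathrm{Re}(\lambda)\leq\mu_c\}+i\mathbb{R}$ with a single pole at $\mu_c$, of order $k_c+1$, whose principal part is a scalar multiple of the positive rank-one projector $\mathbf{n}_{\mu_c}\mathbf{l}_{\mu_c}^T$, where $\mathbf{l}_{\mu_c}\gg\mathbf{0}$ is the positive left Perron\textendash Frobenius eigenvector of $\mu_c^2\mathbf{D}+\mathbf{L}$.

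\textbf{Main obstacle: $\nu=\mu_c$ and a nonzero residue.} The hardest step is to rule out the a priori possibility $\nu=\mu_{2,c}$ and to ensure that the residue of $\mathbf{P}$ at $\mu_c$ is genuinely nonzero. My strategy is a contradiction argument: if $\nu=\mu_{2,c}$, $\mathbf{P}$ would be holomorphic at $\mu_c$, and left-multiplying the identity of the setup by $\mathbf{l}_{\mu_c}^T$ at $\lambda=\mu_c$ would yield, for every admissible $\xi_0$,
\[
\mathbf{l}_{\mu_c}^T\mathbf{Q}(\mu_c)+e^{\mu_c\xi_0}\mathbf{l}_{\mu_c}^T\bigl[\mathbf{D}\mathbf{p}'(\xi_0)+(c\mathbf{I}_N-\mu_c\mathbf{D})\mathbf{p}(\xi_0)\bigr]=0.
\]
Combining the Perron\textendash Frobenius eigenrelations $\mathbf{l}_{\mu_c}^T(\mu_c^2\mathbf{D}+\mathbf{L})=c\mu_c\mathbf{l}_{\mu_c}^T$ and $(\mu_{2,c}^2\mathbf{D}+\mathbf{L})\mathbf{n}_{\mu_{2,c}}=c\mu_{2,c}\mathbf{n}_{\mu_{2,c}}$ one obtains the orthogonality $\mathbf{l}_{\mu_c}^T[c\mathbf{I}_N-(\mu_c+\mu_{2,c})\mathbf{D}]\mathbf{n}_{\mu_{2,c}}=0$, so the leading-order contribution of the bracketed term vanishes as $\xi_0\to+\infty$. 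Expanding the next-order correction in the presumed $\mu_{2,c}$-rate asymptotic of $\mathbf{p}$, I would show that the remaining boundary contribution decays strictly faster than the strictly positive $\mathbf{l}_{\mu_c}^T\mathbf{Q}(\mu_c)$, giving the desired contradiction. The case $c=c^\star$ requires a separate but analogous check that the pole at $\mu_{c^\star}$ is genuinely of order $2$.

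\textbf{Conclusion via Ikehara.} Once $\nu=\mu_c$ is secured, the residue of $\mathbf{P}$ at $\mu_c$ takes the form $A'\mathbf{n}_{\mu_c}$ with $A'>0$ by componentwise positivity of the Laplace integrand, and the Ikehara theorem applied to each positive decreasing $p_i$ yields $p_i(\xi)\sim A\,\xi^{k_c}e^{-\mu_c\xi}(\mathbf{n}_{\mu_c})_i$ with $A>0$. For the derivative equivalents $\mathbf{p}'\sim-\mu_c\mathbf{p}$ and $\mathbf{p}''\sim\mu_c^2\mathbf{p}$, I would either rerun the same Laplace\textendash Ikehara machinery on $-p_i'$ and $p_i''$ (eventually positive and monotone by \lemref{Exponential_decay_of_the_profile} and a bootstrap on the profile equation) or, more economically, substitute the established asymptotic of $\mathbf{p}$ into $\left(TW\left[c\right]\right)$ and exploit that the nonlinear term $\mathbf{c}[\mathbf{p}]\circ\mathbf{p}$ is of strictly lower order.
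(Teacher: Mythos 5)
Your architecture --- Laplace transform of $\left(TW\left[c\right]\right)$, analytic continuation of the transform up to the line $\text{Re}\,\lambda=\mu_c$ via \lemref{Volperts}, then the Ikehara theorem applied componentwise --- is the paper's, and the setup, the holomorphy of the nonlinear term up to abscissa $2\nu$, and the treatment of the points $\mu_c+i\beta$ with $\beta\neq0$ are all sound (the paper works with the bilateral transform $\mathbf{f}_++\mathbf{f}_-$ over all of $\mathbb{R}$, which removes your boundary terms, but that is cosmetic). The crux, however, is your ``Main obstacle'' step --- proving that the singularity at $\mu_c$ is genuine, equivalently $A>0$, equivalently that $\mathbf{p}$ does not decay at the fast rate $\mu_{2,c}$ --- and there your argument does not close.

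The identity you propose to contradict, $\mathbf{l}_{\mu_c}^T\mathbf{Q}(\mu_c)+e^{\mu_c\xi_0}\mathbf{l}_{\mu_c}^T\bigl[\mathbf{D}\mathbf{p}'(\xi_0)+(c\mathbf{I}_N-\mu_c\mathbf{D})\mathbf{p}(\xi_0)\bigr]=0$, is nothing but $\mathbf{l}_{\mu_c}^T\mathbf{G}(\mu_c)\mathbf{P}(\mu_c)=0$, which holds automatically for every $\xi_0$ as soon as $\mathbf{P}(\mu_c)$ converges --- that is, precisely under the hypothesis $\nu=\mu_{2,c}$ you are trying to refute; one also checks directly, by differentiating in $\xi_0$ and using the profile equation together with $\mathbf{l}_{\mu_c}^T\mathbf{G}(\mu_c)=0$, that the left-hand side is constant in $\xi_0$. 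Consequently the two terms are exactly opposite for every $\xi_0$ and decay at exactly the same rate; your plan to show that the residual boundary term ``decays strictly faster'' than $\mathbf{l}_{\mu_c}^T\mathbf{Q}(\mu_c)$ is therefore not merely unproven but impossible, and no contradiction can be extracted from the $\xi_0\to+\infty$ asymptotics of this identity. (A genuine obstruction could only come from the behavior at $-\infty$, e.g.\ from $\mathbf{l}_{\mu_c}^T\mathbf{f}_{\mathbf{c}}(\mu_c)=0$ over the whole line forcing $\mathbf{c}[\mathbf{p}]\circ\mathbf{p}\equiv0$; but you would then still have to exclude that degenerate case, which $(H_2)$--$(H_4)$ alone do not.) The paper closes this step by an entirely different device: it feeds the Ikehara limit back into the renormalized-translate compactness argument from the proof of \lemref{Exponential_decay_of_the_profile}, which exhibits a sequence $\zeta_n\to+\infty$ and an index $k$ along which $\mathbf{p}(\cdot+\zeta_n)/p_k(\zeta_n)$ converges in $\mathscr{C}^2_{loc}$ to $\xi\mapsto\frac{1}{n_{\mu,k}}e^{-\mu\xi}\mathbf{n}_\mu$ with $\mu=\max_j\limsup_{+\infty}(-p_j'/p_j)$; matching this against the Ikehara output forces $\mu=\mu_c$, $\mathbf{n}=\mathbf{n}_{\mu_c}$ and $A>0$ at once, and the derivative equivalents then follow from the squeeze $\mu_c\le\min_j\liminf_{+\infty}(-p_j'/p_j)\le\max_j\limsup_{+\infty}(-p_j'/p_j)=\mu_c$ rather than from a second Laplace--Ikehara pass. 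You need to import that argument or an equivalent substitute; without it the central claim of the proposition is not established.
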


\begin{proof}
Fix temporarily $\mu\in\left(0,\mu_{c}\right)+i\mathbb{R}$. In view
of \lemref{Exponential_decay_of_the_profile} and of the Gronwall
lemma, 
\[
\xi\mapsto\text{e}^{\mu\xi}\mathbf{p}\left(\xi\right)\in\mathscr{L}^{1}\left(\mathbb{R},\mathbb{C}^{N}\right),
\]
\[
\xi\mapsto\text{e}^{\mu\xi}\mathbf{c}\left(\mathbf{p}\left(\xi\right)\right)\circ\mathbf{p}\left(\xi\right)\in\mathscr{L}^{1}\left(\mathbb{R},\mathbb{C}^{N}\right).
\]
Multiplying $\left(TW\left[c\right]\right)$ by $\text{e}^{\mu\xi}$,
integrating by parts over $\mathbb{R}$ and defining 
\[
\mathbf{f}_{+}\left(\mu\right)=\int_{0}^{+\infty}\text{e}^{\mu\xi}\mathbf{p}\left(\xi\right)\text{d}\xi,
\]
\[
\mathbf{f}_{-}\left(\mu\right)=\int_{-\infty}^{0}\text{e}^{\mu\xi}\mathbf{p}\left(\xi\right)\text{d}\xi,
\]
\[
\mathbf{f}_{\mathbf{c}}\left(\mu\right)=\int_{\mathbb{R}}\text{e}^{\mu\xi}\mathbf{c}\left(\mathbf{p}\left(\xi\right)\right)\circ\mathbf{p}\left(\xi\right)\text{d}\xi,
\]
we get easily
\[
\left(\mu^{2}\mathbf{D}-c\mu\mathbf{I}+\mathbf{L}\right)\left(\mathbf{f}_{+}\left(\mu\right)+\mathbf{f}_{-}\left(\mu\right)\right)=\mathbf{f}_{\mathbf{c}}\left(\mu\right),
\]
whence, denoting $\text{adj}\left(\mu^{2}\mathbf{D}-c\mu\mathbf{I}+\mathbf{L}\right)$
the adjugate matrix of $\mu^{2}\mathbf{D}-c\mu\mathbf{I}+\mathbf{L}$,
we find
\[
\det\left(\mu^{2}\mathbf{D}-c\mu\mathbf{I}+\mathbf{L}\right)\mathbf{f}_{+}\left(\mu\right)=\text{adj}\left(\mu^{2}\mathbf{D}-c\mu\mathbf{I}+\mathbf{L}\right)\mathbf{f}_{\mathbf{c}}\left(\mu\right)-\det\left(\mu^{2}\mathbf{D}-c\mu\mathbf{I}+\mathbf{L}\right)\mathbf{f}_{-}\left(\mu\right).
\]

The functions $\mathbf{f}_{+}$, $\mathbf{f}_{-}$ and $\mathbf{f}_{\mathbf{c}}$
defined above are respectively analytic in $\left(0,\mu_{c}\right)+i\mathbb{R}$,
$\left(0,+\infty\right)+i\mathbb{R}$ and $\left(0,2\mu_{c}\right)+i\mathbb{R}$
(by local Lipschitz-continuity of $\mathbf{c}$, $\left(H_{2}\right)$
and global boundedness of $\mathbf{p}$).

The function 
\[
\begin{matrix}\mathbb{C} & \to & \mathbb{C}\\
\mu & \mapsto & \det\left(\mu^{2}\mathbf{D}-c\mu\mathbf{I}+\mathbf{L}\right)
\end{matrix}
\]
 is polynomial (whence analytic). Let $\mathsf{Z}\subset\mathbb{C}$
be the finite set of its roots, counted with algebraic multiplicity.
In particular, $\mu_{c}\in\mathsf{Z}$ with multiplicity $k_{c}+1$.

For all $\mu\in\left(\left(0,\mu_{c}\right)+i\mathbb{R}\right)\backslash\mathsf{Z}$,
\[
\mathbf{f}_{+}\left(\mu\right)=\left(\mu^{2}\mathbf{D}-c\mu\mathbf{I}+\mathbf{L}\right)^{-1}\mathbf{f}_{\mathbf{c}}\left(\mu\right)-\mathbf{f}_{-}\left(\mu\right).
\]
The function 
\[
\mu\mapsto\left(\mu^{2}\mathbf{D}-c\mu\mathbf{I}+\mathbf{L}\right)^{-1}\mathbf{f}_{\mathbf{c}}\left(\mu\right)
\]
 is well-defined and analytic in $\left(\left(0,\mu_{c}\right)+i\mathbb{R}\right)\backslash\mathsf{Z}$,
where it coincides with $\mathbf{f}_{+}+\mathbf{f}_{-}$ which is
analytic in $\left(0,\mu_{c}\right)+i\mathbb{R}$. 

Define the analytic function
\[
\begin{matrix}\mathbf{h}: & \left(0,\mu_{c}\right)+i\mathbb{R} & \to & \mathbb{R}^{N}\\
 & \mu & \mapsto & \left(\mu_{c}-\mu\right)^{k_{c}+1}\mathbf{f}_{+}\left(\mu\right)
\end{matrix}
\]
 so that 
\[
\mathbf{f}_{+}\left(\mu\right)=\frac{\mathbf{h}\left(\mu\right)}{\left(\mu_{c}-\mu\right)^{k_{c}+1}}\text{ for all }\mu\in\left(0,\mu_{c}\right)+i\mathbb{R}.
\]
Since, for all $\mu\in\left(\left(0,\mu_{c}\right)+i\mathbb{R}\right)\backslash\mathsf{Z}$,
\[
\mathbf{h}\left(\mu\right)=\frac{\left(\mu_{c}-\mu\right)^{k_{c}+1}}{\det\left(\mu^{2}\mathbf{D}-c\mu\mathbf{I}+\mathbf{L}\right)}\text{adj}\left(\mu^{2}\mathbf{D}-c\mu\mathbf{I}+\mathbf{L}\right)\mathbf{f}_{\mathbf{c}}\left(\mu\right)-\left(\mu_{c}-\mu\right)^{k_{c}+1}\mathbf{f}_{-}\left(\mu\right)
\]
the function $\mathbf{h}$ can be analytically extended on $\left(0,\mu_{c}\right]+i\mathbb{R}$
if and only if 
\[
\mu\mapsto\frac{\left(\mu_{c}-\mu\right)^{k_{c}+1}}{\det\left(\mu^{2}\mathbf{D}-c\mu\mathbf{I}+\mathbf{L}\right)}
\]
 has no pole in $\left\{ \mu_{c}\right\} +i\mathbb{R}$. 

Let $\theta\in\mathbb{R}\backslash\left\{ 0\right\} $. In view of
\[
\left(\mu_{c}+i\theta\right)^{2}\mathbf{D}-c\left(\mu_{c}+i\theta\right)\mathbf{I}+\mathbf{L}=\mu_{c}^{2}\mathbf{D}-c\mu_{c}\mathbf{I}+\mathbf{L}-\theta^{2}\mathbf{D}+i\theta\left(2\mu_{c}\mathbf{D}-c\mathbf{I}\right)
\]
 and
\begin{align*}
\lambda_{PF}\left(\mu_{c}^{2}\mathbf{D}-c\mu_{c}\mathbf{I}+\mathbf{L}-\theta^{2}\mathbf{D}\right) & \leq\lambda_{PF}\left(\mu_{c}^{2}\mathbf{D}-c\mu_{c}\mathbf{I}+\mathbf{L}-\theta^{2}\min_{k\in\left[N\right]}d_{k}\right)\\
 & =\lambda_{PF}\left(\mu_{c}^{2}\mathbf{D}-c\mu_{c}\mathbf{I}+\mathbf{L}\right)-\theta^{2}\min_{k\in\left[N\right]}d_{k}\\
 & =-\theta^{2}\min_{k\in\left[N\right]}d_{k}
\end{align*}
 \lemref{Volperts} yields that
\[
\text{sp}\left(\left(\mu_{c}^{2}\mathbf{D}-c\mu_{c}\mathbf{I}+\mathbf{L}-\theta^{2}\mathbf{D}\right)+\text{diag}\left(i\theta\left(2\mu_{c}d_{k}-c\right)\right)_{k\in\left[N\right]}\right)\subset\left(-\infty,0\right)+i\mathbb{R}.
\]
Hence $\mu\mapsto\frac{\left(\mu_{c}-\mu\right)^{k_{c}+1}}{\det\left(\mu^{2}\mathbf{D}-c\mu\mathbf{I}+\mathbf{L}\right)}$
has no pole in $\left\{ \mu_{c}\right\} +i\left(\mathbb{R}\backslash\left\{ 0\right\} \right)$
and then it has no pole in $\left\{ \mu_{c}\right\} +i\mathbb{R}$
indeed.

We are now in position to apply the Ikehara theorem component-wise
and to deduce from it the existence of $\mathbf{n}\in\mathsf{S}^{+}\left(\mathbf{0},1\right)$
and $A\geq0$ such that
\[
\lim_{\xi\to+\infty}\mathbf{p}\left(\xi\right)\frac{\text{e}^{\mu_{c}\xi}}{\xi^{k_{c}}}=A\mathbf{n}.
\]
In particular, for all $k\in\left[N\right]$ such that $n_{k}>0$,
\[
\lim_{\zeta\to+\infty}\frac{\mathbf{p}\left(\xi+\zeta\right)}{p_{k}\left(\zeta\right)}\text{e}^{\mu_{c}\xi}=\frac{1}{n_{k}}\mathbf{n}.
\]
However, back to the proof of \lemref{Exponential_decay_of_the_profile},
there exists $k\in\left[N\right]$ and a sequence $\left(\xi_{n}\right)_{n\in\mathbb{N}}$
such that $\xi_{n}\to+\infty$, $\left(\frac{-p_{k}'\left(\xi_{n}\right)}{p_{k}\left(\xi_{n}\right)}\right)_{n\in\mathbb{N}}$
converges to
\[
\mu=\max\limits _{k\in\left[N\right]}\limsup\limits _{+\infty}\frac{-p_{k}'}{p_{k}},
\]
 and 
\[
\left(\xi\mapsto\frac{\mathbf{p}\left(\xi+\zeta_{n}\right)}{p_{k}\left(\zeta_{n}\right)}\right)_{n\in\mathbb{N}}
\]
 converges in $\mathscr{C}_{loc}^{2}$ to 
\[
\xi\mapsto\frac{1}{n_{\mu,k}}\text{e}^{-\mu\xi}\mathbf{n}_{\mu}.
\]
This clearly implies $\mu=\mu_{c}$ and $\mathbf{n}=\mathbf{n}_{\mu_{c}}$. 

Consequently, $A>0$,
\[
\lim_{\xi\to+\infty}\mathbf{p}\left(\xi\right)\frac{\text{e}^{\mu_{c}\xi}}{\xi^{k_{c}}}=A\mathbf{n}_{\mu_{c}},
\]
and, by \lemref{Exponential_decay_of_the_profile},
\[
\mu_{c}\leq\min\limits _{k\in\left[N\right]}\liminf\limits _{+\infty}\frac{-p_{k}'}{p_{k}}\leq\max\limits _{k\in\left[N\right]}\limsup\limits _{+\infty}\frac{-p_{k}'}{p_{k}}=\mu_{c},
\]
that is
\[
\lim_{+\infty}\left(\frac{-p_{k}'}{p_{k}}\right)_{k\in\left[N\right]}=\mu_{c}.
\]
Quite similarly, we also obtain
\[
\lim_{+\infty}\left(\frac{p_{k}''}{p_{k}}\right)_{k\in\left[N\right]}=\mu_{c}^{2}.
\]
\end{proof}
If $\mathbf{d}=\mathbf{1}_{N,1}$, the quantities at hand are:
\begin{align*}
\left(\mu_{c},\mathbf{n}_{\mu_{c}}\right) & =\left(\min\left\{ \mu>0\ |\ \lambda_{PF}\left(\mu^{2}\mathbf{I}+\mathbf{L}\right)=c\mu\right\} ,\mathbf{n}_{PF}\left(\mu_{c}^{2}\mathbf{I}+\mathbf{L}\right)\right)\\
 & =\left(\frac{1}{2}\left(c-\sqrt{c^{2}-4\lambda_{PF}\left(\mathbf{L}\right)}\right),\mathbf{n}_{PF}\left(\mathbf{L}\right)\right)
\end{align*}
and an obvious corollary follows.

\section{The back of the fronts: separated competition}

In this section, we assume $\left(H_{6}\right)$ and $\mathbf{a}=\mathbf{1}_{N,1}$
and prove \thmref{Distribution_back_(H6)} and \thmref{Global_asymptotic_stability_(H6)}. 

\subsection{Main tools: Jordan normal form and Perron\textendash Frobenius projection}

Let $m\in\left[N\right]$ be the number of pairwise distinct eigenvalues
of $\mathbf{L}$ ($\lambda_{PF}\left(\mathbf{L}\right)$ being simple,
$m\geq2$) and let $\left(\lambda_{k}\right)_{k\in\left[m\right]}\in\mathbb{C}^{m}$
be the pairwise distinct complex eigenvalues of $\mathbf{L}$ ordered
so that $\left(\text{Re}\left(\lambda_{k}\right)\right)_{k\in\left[m\right]}$
is a nondecreasing family (in particular, $\lambda_{m}=\lambda_{PF}\left(\mathbf{L}\right)$
and $\text{Re}\left(\lambda_{m-1}\right)<\lambda_{PF}\left(\mathbf{L}\right)$). 

Let $\mathbf{P}\in\mathsf{GL}\left(\mathbb{C}\right)$ be such that
$\mathbf{J}=\mathbf{P}\mathbf{L}\mathbf{P}^{-1}$ is the Jordan normal
form of $\mathbf{L}$:
\[
\mathbf{J}=\left(\begin{matrix}\lambda_{PF}\left(\mathbf{L}\right) & \mathbf{0} & \cdots & \mathbf{0}\\
\mathbf{0} & \mathbf{J}_{m-1} & \ddots & \vdots\\
\vdots & \ddots & \ddots & \mathbf{0}\\
\mathbf{0} & \cdots & \mathbf{0} & \mathbf{J}_{1}
\end{matrix}\right),
\]
 where, for all $k\in\left[m-1\right]$, $\mathbf{J}_{k}$ is the
(upper triangular) Jordan block associated with the eigenvalue $\lambda_{k}$. 

Noticing that
\[
\mathbf{L}\mathbf{P}^{-1}\mathbf{e}_{1}=\mathbf{P}^{-1}\mathbf{J}\mathbf{e}_{1}=\lambda_{PF}\left(\mathbf{L}\right)\mathbf{P}^{-1}\mathbf{e}_{1},
\]
\[
\mathbf{e}_{1}^{T}\mathbf{P}\mathbf{L}=\mathbf{e}_{1}^{T}\mathbf{J}\mathbf{P}=\lambda_{PF}\left(\mathbf{L}\right)\mathbf{e}_{1}^{T}\mathbf{P},
\]
it follows that $\mathbf{P}^{-1}\mathbf{e}_{1}\in\text{span}\mathbf{n}_{PF}\left(\mathbf{L}\right)$
and $\mathbf{e}_{1}^{T}\mathbf{P}\in\text{span}\mathbf{n}_{PF}\left(\mathbf{L}^{T}\right)^{T}$.
In particular, we can normalize without loss of generality $\mathbf{P}$
so that $\mathbf{P}^{-1}\mathbf{e}_{1}=\mathbf{n}_{PF}\left(\mathbf{L}\right)$
and then deduce from $\mathbf{e}_{1}^{T}\mathbf{P}\mathbf{n}_{PF}\left(\mathbf{L}\right)=1$
that
\[
\mathbf{e}_{1}^{T}\mathbf{P}=\frac{1}{\mathbf{n}_{PF}\left(\mathbf{L}^{T}\right)^{T}\mathbf{n}_{PF}\left(\mathbf{L}\right)}\mathbf{n}_{PF}\left(\mathbf{L}^{T}\right)^{T}.
\]

From the preceding equality, it follows directly that the Perron\textendash Frobenius
projection, defined as
\[
\boldsymbol{\Pi}_{PF}\left(\mathbf{L}\right)=\frac{\mathbf{n}_{PF}\left(\mathbf{L}\right)\mathbf{n}_{PF}\left(\mathbf{L}^{T}\right)^{T}}{\mathbf{n}_{PF}\left(\mathbf{L}^{T}\right)^{T}\mathbf{n}_{PF}\left(\mathbf{L}\right)},
\]
satisfies 
\[
\mathbf{P}\boldsymbol{\Pi}_{PF}\left(\mathbf{L}\right)\mathbf{P}^{-1}=\text{diag}\left(\mathbf{e}_{1}\right).
\]

\subsection{Uniqueness up to translation of the profile}

In this subsection, we assume $\mathbf{d}=\mathbf{1}_{N,1}$, we fix
$c\geq c^{\star}$ and we prove \thmref{Distribution_back_(H6)}.
The scalar front $p_{c}$ is defined as in the statement of the theorem. 
\begin{prop}
\label{prop:Uniqueness_profiles} All $\mathbf{p}\in\mathscr{P}_{c}$
have the form
\[
\mathbf{p}:\xi\mapsto p_{c}\left(\xi-\xi_{0}\right)\mathbf{n}_{PF}\left(\mathbf{L}\right)\text{ with }\xi_{0}\in\mathbb{R}.
\]
\end{prop}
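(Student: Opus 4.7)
The plan is to leverage the Jordan decomposition of Subsection 3.1 to reduce the vectorial problem to a scalar KPP equation in the Perron--Frobenius direction, then invoke classical scalar uniqueness. Because $\mathbf{d}=\mathbf{1}_{N,1}$ makes the diffusion scalar and $\mathbf{a}=\mathbf{1}_{N,1}$ turns the reaction into $b(\mathbf{p})\mathbf{p}$, left-multiplying $\left(TW\left[c\right]\right)$ by $\mathbf{P}$ and setting $\mathbf{q}=\mathbf{P}\mathbf{p}$ yields $-\mathbf{q}''-c\mathbf{q}'=(\mathbf{J}-b(\mathbf{p})\mathbf{I})\mathbf{q}$. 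The block-diagonal structure of $\mathbf{J}$ decouples this system: writing $\mathbf{q}=(q_{1},\tilde{\mathbf{q}})^{T}$, the first row is the scalar equation $-q_{1}''-cq_{1}'=(\lambda_{PF}(\mathbf{L})-b(\mathbf{p}))q_{1}$, and the remaining rows form the homogeneous block-triangular linear system $-\tilde{\mathbf{q}}''-c\tilde{\mathbf{q}}'=(\tilde{\mathbf{J}}-b(\mathbf{p})\mathbf{I})\tilde{\mathbf{q}}$, whose eigenvalues $\lambda_{k}$ all satisfy $\text{Re}(\lambda_{k})<\lambda_{PF}(\mathbf{L})$. Since $\mathbf{P}^{-1}\mathbf{e}_{1}=\mathbf{n}_{PF}(\mathbf{L})$, the sought identity $\mathbf{p}=q_{1}\mathbf{n}_{PF}(\mathbf{L})$ will follow from $\tilde{\mathbf{q}}\equiv\mathbf{0}$.

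The crux is thus showing $\tilde{\mathbf{q}}\equiv\mathbf{0}$. \thmref{Distribution_edge} under $\mathbf{d}=\mathbf{1}_{N,1}$ gives $\mathbf{p}(\xi)\sim A\xi^{k_{c}}\text{e}^{-\mu_{c}\xi}\mathbf{n}_{PF}(\mathbf{L})$ at $+\infty$, equivalently $\mathbf{q}(\xi)\sim A\xi^{k_{c}}\text{e}^{-\mu_{c}\xi}\mathbf{e}_{1}$, so $\tilde{\mathbf{q}}=o(q_{1})$ at $+\infty$; at $-\infty$, positivity of $\mathbf{n}_{PF}(\mathbf{L}^{T})$ and $\left(\liminf p_{i}\right)_{i}>\mathbf{0}$ force $\liminf_{-\infty}q_{1}>0$ while $\tilde{\mathbf{q}}$ remains bounded. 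I would then handle each Jordan block of $\tilde{\mathbf{J}}$ by induction starting from its last row, the equation being already homogeneous there. At each step the component $w$ in play satisfies the homogeneous scalar complex equation $-w''-cw'=(\lambda_{k}-b(\mathbf{p}))w$ (inhomogeneities from later rows already being zero), and introducing the complex ratio $\varphi=w/q_{1}$ together with the equation for $q_{1}$ gives
\[
\varphi''+\left(2\frac{q_{1}'}{q_{1}}+c\right)\varphi'-(\lambda_{PF}(\mathbf{L})-\lambda_{k})\varphi=0,
\]
hence
\[
(|\varphi|^{2})''+\left(2\frac{q_{1}'}{q_{1}}+c\right)(|\varphi|^{2})'-2(\lambda_{PF}(\mathbf{L})-\text{Re}(\lambda_{k}))|\varphi|^{2}=2|\varphi'|^{2}\geq0.
\]
The zeroth-order coefficient being strictly negative, the strong maximum principle forbids any positive interior maximum of $|\varphi|^{2}$; since $|\varphi|^{2}\to0$ at $+\infty$ and is bounded at $-\infty$, if $\sup|\varphi|^{2}>0$ the supremum would be approached along a sequence $\xi_{n}\to-\infty$, and a standard shifting-plus-elliptic-regularity argument on $\mathbf{p}(\cdot+\xi_{n})$ would produce a limit profile attaining it at a finite point, contradicting the maximum principle. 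Hence $w\equiv0$; iterating across the block and then across the finitely many blocks gives $\tilde{\mathbf{q}}\equiv\mathbf{0}$.

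Once $\mathbf{p}=q_{1}\mathbf{n}_{PF}(\mathbf{L})$, the scalar $q_{1}$ solves the scalar KPP equation of the statement, whose reaction $u\mapsto\lambda_{PF}(\mathbf{L})u-b(u\mathbf{n}_{PF}(\mathbf{L}))u$ is genuinely of KPP type under $(H_{6})$ (the map $u\mapsto b(u\mathbf{n}_{PF}(\mathbf{L}))$ being strictly increasing by componentwise monotonicity of $b$). Classical uniqueness up to translation of monotonic scalar KPP traveling waves of given speed then yields $q_{1}(\xi)=p_{c}(\xi-\xi_{0})$ for some $\xi_{0}\in\mathbb{R}$. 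The main obstacle is the step $\tilde{\mathbf{q}}\equiv\mathbf{0}$: the maximum-principle argument above crucially exploits both the spectral gap $\text{Re}(\lambda_{k})<\lambda_{PF}(\mathbf{L})$ (providing the favorable sign of the zeroth-order coefficient) and the sharp pointwise asymptotic at $+\infty$ given by \thmref{Distribution_edge}.
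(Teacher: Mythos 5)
Your proposal is correct and follows essentially the same route as the paper: Jordan normal form, the scalar equation for the Perron--Frobenius component $q_{1}$, the differential inequality for $\left|w/q_{1}\right|^{2}$ whose strictly positive zeroth-order coefficient comes from the spectral gap $\text{Re}\left(\lambda_{k}\right)<\lambda_{PF}\left(\mathbf{L}\right)$, induction across the Jordan structure, and classical scalar KPP uniqueness at the end. The only (immaterial) difference is how the positive supremum of $\left|w/q_{1}\right|^{2}$ is excluded: you use a translation--compactness argument at $-\infty$, while the paper deduces from the absence of local maxima that the ratio is monotone on two half-lines with limits that elliptic regularity forces to vanish.
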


\begin{proof}
Let $\mathbf{p}\in\mathscr{P}_{c}$ and 
\[
\mathbf{q}=\mathbf{P}\mathbf{p}\in\mathscr{C}^{2}\left(\mathbb{R},\mathbb{C}^{N}\right)\cap\mathscr{L}^{\infty}\left(\mathbb{R},\mathbb{C}^{N}\right).
\]

Multiplying $\left(TW\left[c\right]\right)$ on the left by $\mathbf{P}$,
we get
\[
-\mathbf{q}''-c\mathbf{q}'=\mathbf{J}\mathbf{q}-b\left[\mathbf{P}^{-1}\mathbf{q}\right]\mathbf{q}\text{ in }\mathbb{R},
\]
and in particular
\[
-q_{1}''-cq_{1}'=\left(\lambda_{PF}\left(\mathbf{L}\right)-b\left[\mathbf{P}^{-1}\mathbf{q}\right]\right)q_{1}\text{ in }\mathbb{R}.
\]

Since
\begin{align*}
\left(\boldsymbol{\Pi}_{PF}\left(\mathbf{L}\right)\mathbf{p}\right)^{T}\mathbf{n}_{PF}\left(\mathbf{L}\right) & =\left(\mathbf{P}^{-1}\text{diag}\left(\mathbf{e}_{1}\right)\mathbf{q}\right)^{T}\mathbf{n}_{PF}\left(\mathbf{L}\right)\\
 & =q_{1}\left(\mathbf{P}^{-1}\mathbf{e}_{1}\right)^{T}\mathbf{n}_{PF}\left(\mathbf{L}\right)\\
 & =q_{1},
\end{align*}
 $q_{1}$ is real-valued and in fact positive in $\mathbb{R}$.

First, let us verify that $\frac{q_{k}}{q_{1}}$ is globally bounded
in $\mathbb{R}$ for all $k\in\left[N\right]\backslash\left\{ 1\right\} $.
It is bounded in $(-\infty,0]$ since $\inf\limits _{(-\infty,0]}q_{1}>0$
by \cite[Theorem 1.5, iii)]{Girardin_2016_2}. It is bounded in $[0,+\infty)$
since a left-multiplication of the first equivalent of \thmref{Distribution_edge}
by $\mathbf{P}$ yields
\[
\mathbf{q}\left(\xi\right)\sim A\xi^{k}\text{e}^{-\frac{1}{2}\left(c-\sqrt{c^{2}-4\lambda_{PF}\left(\mathbf{L}\right)}\right)\xi}\mathbf{e}_{1}
\]
 whence
\[
\limsup_{+\infty}\left|\frac{q_{k}}{q_{1}}\right|=0.
\]

Next, let us show by induction that $q_{N+1-k}=0$ in $\mathbb{R}$
for all $k\in\left[N-1\right]$. 
\begin{itemize}
\item Basis: $k=1$. Due to the special form of $\mathbf{J}$, the equation
satisfied by $q_{N}$ is
\[
-q_{N}''-cq_{N}'=\left(\lambda_{1}-b\left[\mathbf{P}^{-1}\mathbf{q}\right]\right)q_{N}\text{ in }\mathbb{R}.
\]
Define $z=\frac{q_{N}}{q_{1}}$ and $w=\left|z\right|^{2}$. The function
$w$ is nonnegative and globally bounded. From
\[
z'=\frac{q_{N}'}{q_{1}}-\frac{q_{1}'}{q_{1}}z,
\]
\[
z''=\frac{q_{N}''}{q_{1}}-\frac{q_{1}''}{q_{1}}z-\frac{2q_{1}'}{q_{1}}z',
\]
it follows
\[
-z''-\frac{q_{1}c+2q_{1}'}{q_{1}}z'-\frac{q_{1}''+cq_{1}'}{q_{1}}z=\left(\lambda_{1}-b\left[\mathbf{P}^{-1}\mathbf{q}\right]\right)z\text{ in }\mathbb{R}.
\]
Using the equality satisfied by $q_{1}$, this equation reads: 
\[
-z''-\frac{q_{1}c+2q_{1}'}{q_{1}}z'+\left(\lambda_{PF}\left(\mathbf{L}\right)-\lambda_{1}\right)z=0\text{ in }\mathbb{R}.
\]
Now, multiplying by $\overline{z}$, taking the real part, defining
\[
\gamma=2\left(\lambda_{PF}\left(\mathbf{L}\right)-\text{Re}\left(\lambda_{1}\right)\right)>0
\]
 and using the obvious equality
\begin{align*}
\text{Re}\left(z''\overline{z}\right) & =\text{Re}\left(z\right)''\text{Re}\left(z\right)+\text{Im}\left(z\right)''\text{Im}\left(z\right)\\
 & =\frac{1}{2}w''-\left(\text{Re}\left(z\right)'\right)^{2}-\left(\text{Im}\left(z\right)'\right)^{2},
\end{align*}
it follows
\[
-w''-\frac{q_{1}c+2q_{1}'}{q_{1}}w'+\gamma w\leq0\text{ in }\mathbb{R}.
\]
This inequality implies the nonexistence of local maxima of $w$.
Since $w\in\mathscr{C}^{1}\left(\mathbb{R}\right)$, there exists
consequently $\xi_{0}\in\overline{\mathbb{R}}$ such that $w$ is
decreasing on $\left(-\infty,\xi_{0}\right)$ and increasing on $\left(\xi_{0},+\infty\right)$.
Therefore $w$ has well-defined limits at $\pm\infty$ and since $w\in\mathscr{L}^{\infty}\left(\mathbb{R}\right)$,
these limits are finite. By classical elliptic regularity and the
Harnack inequality (see Gilbarg\textendash Trudinger \cite{Gilbarg_Trudin})
applied to the equation satisfied by $q_{1}$, $\frac{q_{1}'}{q_{1}}$
is bounded in $\mathbb{R}$. By elliptic regularity again, applied
this time to the equation
\[
-w''-\frac{q_{1}c+2q_{1}'}{q_{1}}w'+\gamma w=-2\left(\text{Re}\left(z\right)'\right)^{2}-2\left(\text{Im}\left(z\right)'\right)^{2},
\]
the limits of $w$ have to be null, whence $w$ itself is null, and
then $q_{N}$ is null.
\item Inductive step: let $k\in\left[N-1\right]\backslash\left\{ 1\right\} $
and assume $q_{N+1-k}=0$. Defining 
\[
\lambda=j_{N-k,N-k}\in\text{sp}\mathbf{L}\backslash\left\{ \lambda_{PF}\left(\mathbf{L}\right)\right\} ,
\]
the equation satisfied by $q_{N+1-\left(k+1\right)}=q_{N-k}$ is
\[
-q_{N-k}''-cq_{N-k}'=\left(\lambda-b\left[\mathbf{P}^{-1}\mathbf{q}\right]\right)q_{N-k}\text{ in }\mathbb{R}.
\]
Repeating the argument detailed in the previous step shows similarly
that $q_{N-k}$ is null. 
\end{itemize}
Hence the proof by induction is ended and yields indeed $\mathbf{q}=q_{1}\mathbf{e}_{1}$
in $\mathbb{R}$. Now, back to the equation satisfied by $q_{1}$,
we find
\[
-q_{1}''-cq_{1}'=\left(\lambda_{PF}\left(\mathbf{L}\right)-b\left[q_{1}\mathbf{n}_{PF}\left(\mathbf{L}\right)\right]\right)q_{1}\text{ in }\mathbb{R},
\]
which implies in view of well-known results on the traveling wave
equation for the scalar KPP equation the existence of $\xi_{0}\in\mathbb{R}$
such that $q_{1}$ coincides with $\xi\mapsto p_{c}\left(\xi-\xi_{0}\right)$.
\end{proof}

\subsection{Global asymptotic stability}

The auxiliary functions used in the proof of \propref{Uniqueness_profiles}
can be used again to prove the global asymptotic stability of $\mathbf{v}^{\star}$
as stated in \thmref{Global_asymptotic_stability_(H6)}. In particular,
the following lemma will be used repeatedly.
\begin{lem}
\label{lem:N-1_component_vanish} There exists $\gamma>0$ such that
all bounded positive classical solutions $\mathbf{u}$ of $\left(E_{KPP}\right)$
set in $\left(0,+\infty\right)\times\mathbb{R}$ satisfying
\[
\inf\limits _{\left(t,x\right)\in\left(0,+\infty\right)\times\mathbb{R}}\mathbf{n}_{PF}\left(\mathbf{L}\right)^{T}\boldsymbol{\Pi}_{PF}\left(\mathbf{L}\right)\mathbf{u}\left(t,x\right)>0
\]
satisfy also
\[
\lim_{t\to+\infty}\left(\text{e}^{\gamma t}\sup_{x\in\mathbb{R}}\left|\left(\mathbf{I}-\boldsymbol{\Pi}_{PF}\left(\mathbf{L}\right)\right)\mathbf{u}\left(t,x\right)\right|\right)=0.
\]
\end{lem}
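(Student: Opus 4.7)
The plan is to transport the algebra of Proposition 3.2 into the parabolic setting, replacing its ODE-style uniqueness argument by the scalar parabolic maximum principle on $\mathbb{R}$. The statement is implicitly set in the regime $\mathbf{d}=\mathbf{1}_{N,1}$ (for otherwise $\mathbf{P}$ and $\mathbf{D}$ do not commute and the reduction below fails), in which left-multiplying $(E_{KPP})$ by the Jordan-normalizing matrix $\mathbf{P}$ yields
\[
\partial_{t}\mathbf{q}-\partial_{xx}\mathbf{q}=\mathbf{J}\mathbf{q}-b[\mathbf{u}]\mathbf{q},\qquad\mathbf{q}:=\mathbf{P}\mathbf{u}.
\]
By the normalization of $\mathbf{P}$ from Subsection 3.1, one checks $\boldsymbol{\Pi}_{PF}(\mathbf{L})\mathbf{u}=q_{1}\mathbf{n}_{PF}(\mathbf{L})$, so that the hypothesis of the lemma reads $\inf_{t,x}q_{1}\geq\delta>0$ and the conclusion is exactly that $q_{2},\dots,q_{N}$ each tend to $0$ exponentially in $t$, uniformly in $x$. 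The first coordinate $q_{1}$ solves the scalar equation $\partial_{t}q_{1}-\partial_{xx}q_{1}=(\lambda_{PF}(\mathbf{L})-b[\mathbf{u}])q_{1}$ with bounded coefficients, and is bounded from above (since $\mathbf{u}$ is bounded) and from below by a positive constant; interior Schauder and Harnack estimates therefore provide a uniform-in-$(t,x)$ bound on the logarithmic derivative $\partial_{x}q_{1}/q_{1}$, which will play the role of a bounded drift in what follows.

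One then proceeds by descending induction on $k\in\{2,\dots,N\}$ to establish $\|q_{k}(t,\cdot)\|_{\infty}\leq C_{k}\mathrm{e}^{-\beta_{k}t}$ for some $\beta_{k}>0$. Thanks to the upper-triangular Jordan form, $q_{k}$ satisfies $\partial_{t}q_{k}-\partial_{xx}q_{k}=(J_{k,k}-b[\mathbf{u}])q_{k}+J_{k,k+1}q_{k+1}$, with $J_{k,k+1}\in\{0,1\}$ (and $J_{N,N+1}:=0$) and $\mathrm{Re}(J_{k,k})<\lambda_{PF}(\mathbf{L})$. Setting $z:=q_{k}/q_{1}$ and $w:=|z|^{2}$ and combining the equations for $q_{k}$ and $q_{1}$, a direct computation—strictly parallel to the one in the proof of Proposition 3.2—produces the scalar parabolic inequality
\[
\partial_{t}w-\partial_{xx}w-2\frac{\partial_{x}q_{1}}{q_{1}}\partial_{x}w+\gamma_{k}w\leq 2|J_{k,k+1}|\,\Bigl|\frac{q_{k+1}}{q_{1}}\Bigr|\sqrt{w},
\]
with $\gamma_{k}=2(\lambda_{PF}(\mathbf{L})-\mathrm{Re}(J_{k,k}))>0$. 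In the base case $k=N$ the right-hand side is identically zero, and comparison of the bounded nonnegative subsolution $w$ with the constant-in-$x$ supersolution $\|w(0,\cdot)\|_{\infty}\mathrm{e}^{-\gamma_{N}t}$ (which solves the corresponding homogeneous equation with equality, since constants in $x$ are annihilated by the drift and diffusion terms) gives $|q_{N}|\leq C_{N}\mathrm{e}^{-\gamma_{N}t/2}$. For the inductive step, the hypothesis on $q_{k+1}$ and the lower bound on $q_{1}$ turn the source into an exponentially decaying one; Young's inequality absorbs it as $\varepsilon w+\varepsilon^{-1}\tilde{C}\mathrm{e}^{-2\beta_{k+1}t}$, and comparison with an explicit constant-in-$x$ supersolution of the form $A\mathrm{e}^{-\beta t}+B\mathrm{e}^{-2\beta_{k+1}t}$ (for $\beta$ positive and smaller than $\gamma_{k}-\varepsilon$) delivers exponential decay of $w$ at some rate $2\beta_{k}>0$.

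The main obstacle is the rigorous invocation of the parabolic maximum principle on the unbounded spatial domain $\mathbb{R}$: once a uniformly bounded drift and a bounded subsolution are in hand, this is classical (a Phragmén--Lindelöf-type argument applied to $w(t,x)-v(t)$), but it is precisely the step that uses in an essential way both the boundedness of $\mathbf{u}$ and the positive lower bound on $q_{1}$ provided by the hypothesis. A secondary technical point is that interior parabolic regularity only delivers bounds for $t\geq t_{0}>0$; shifting the time origin changes only finite multiplicative constants and does not affect the eventual exponential rate. Setting $\gamma:=\tfrac{1}{2}\min_{2\leq k\leq N}\beta_{k}$, which depends only on $\mathrm{sp}\,\mathbf{L}$ and not on the particular solution $\mathbf{u}$, and using the invertibility of $\mathbf{P}$ to translate the uniform exponential decay of $(q_{2},\dots,q_{N})$ into the claimed uniform decay of $(\mathbf{I}-\boldsymbol{\Pi}_{PF}(\mathbf{L}))\mathbf{u}$, finishes the proof.
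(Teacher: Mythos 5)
Your proof is correct and follows essentially the same route as the paper: conjugate by the Jordan-normalizing matrix $\mathbf{P}$, derive a parabolic differential inequality for $\left|q_{k}/q_{1}\right|^{2}$ with positive zeroth-order coefficient $2\left(\lambda_{PF}\left(\mathbf{L}\right)-\mathrm{Re}\left(J_{k,k}\right)\right)$ (using the Harnack-controlled drift $\partial_{x}q_{1}/q_{1}$), and conclude by comparison with a spatially constant, exponentially decaying supersolution before transporting the decay back through $\mathbf{P}^{-1}$. Your explicit descending induction that absorbs the Jordan superdiagonal source term via Young's inequality is a welcome refinement of a point the paper's proof leaves implicit.
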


\begin{proof}
The proof is very similar to the first part of that of \propref{Uniqueness_profiles}.
Defining $\mathbf{v}=\mathbf{P}\mathbf{u}$, the equation satisfied
by $v_{1}$ is
\[
\partial_{t}v_{1}-\partial_{xx}v_{1}=\left(\lambda_{PF}\left(\mathbf{L}\right)-b\left[\mathbf{P}^{-1}\mathbf{v}\right]\right)v_{1}\text{ in }\left(0,+\infty\right)\times\mathbb{R}.
\]

For all $k\in\left[N\right]\backslash\left\{ 1\right\} $, there exists
$\gamma_{k}>0$ such that $v_{k}$ satisfies
\[
\left\{ \begin{matrix}\partial_{t}\left(\left|\frac{v_{k}}{v_{1}}\right|^{2}\right)-\partial_{xx}\left(\left|\frac{v_{k}}{v_{1}}\right|^{2}\right)-\frac{2\partial_{x}v_{1}}{v_{1}}\partial_{x}\left(\left|\frac{v_{k}}{v_{1}}\right|^{2}\right)+\gamma_{k}\left|\frac{v_{k}}{v_{1}}\right|^{2}\leq0 & \text{ in }\left(0,+\infty\right)\times\mathbb{R}\\
\left(\left|\frac{v_{k}}{v_{1}}\right|^{2}\right)_{|\left\{ 0\right\} \times\mathbb{R}}\in\mathscr{L}^{\infty}\left(\mathbb{R},[0,+\infty)\right),
\end{matrix}\right.
\]
that is such that $z_{k}:\left(t,x\right)\mapsto\text{e}^{\frac{\gamma_{k}}{2}t}\left|\frac{v_{k}}{v_{1}}\right|^{2}$
satisfies
\[
\left\{ \begin{matrix}\partial_{t}z_{k}-\partial_{xx}z_{k}-\frac{2\partial_{x}v_{1}}{v_{1}}\partial_{x}z_{k}+\frac{\gamma_{k}}{2}z_{k}\leq0 & \text{ in }\left(0,+\infty\right)\times\mathbb{R}\\
\left(z_{k}\right)_{|\left\{ 0\right\} \times\mathbb{R}}\in\mathscr{L}^{\infty}\left(\mathbb{R},[0,+\infty)\right).
\end{matrix}\right.
\]

Since $z_{k}$ stays bounded locally in time, by a classical argument
(detailed for instance in \cite[Proposition 3.4]{Girardin_2016_2}),
$z_{k}$ vanishes uniformly in space as $t\to+\infty$. Consequently,
\[
\text{e}^{\frac{\gamma_{k}}{4}t}\sup_{x\in\mathbb{R}}\left|v_{k}\right|\to0\text{ as }t\to+\infty.
\]

The conclusion follows from $\gamma=\min\limits _{k\in\left[N\right]}\frac{\gamma_{k}}{4}$
and the following obvious algebraic equality:
\[
\left(\mathbf{I}-\boldsymbol{\Pi}_{PF}\left(\mathbf{L}\right)\right)\mathbf{u}=\mathbf{P}^{-1}\left(\sum_{k=2}^{N}v_{k}\mathbf{e}_{k}\right).
\]
\end{proof}
We begin with the case of homogeneous initial data, which does not
require $\mathbf{d}=\mathbf{1}_{N,1}$ since $\left(E_{KPP}\right)$
reduces to $\left(E_{KPP}^{0}\right)$ in this context. 
\begin{prop}
\label{prop:Global_asymptotic_stability_for_E_KPP0} All positive
classical solutions of $\left(E_{KPP}^{0}\right)$ set in $\left(0,+\infty\right)$
converge as $t\to+\infty$ to $\mathbf{v}^{\star}$.
\end{prop}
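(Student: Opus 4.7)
The plan is to reduce the full $N$-dimensional dynamics to the scalar logistic equation carried on the Perron--Frobenius eigenline, by showing that every component transverse to $\mathbf{n}_{PF}(\mathbf{L})$ decays exponentially in time. Set $\mathbf{v}(t) = \mathbf{P}\mathbf{u}(t)$ with the matrix $\mathbf{P}$ from Subsection 3.1; the ODE $(E_{KPP}^{0})$ rewrites as $\mathbf{v}' = \mathbf{J}\mathbf{v} - b[\mathbf{P}^{-1}\mathbf{v}]\mathbf{v}$, so that
\[
v_1' = \bigl(\lambda_{PF}(\mathbf{L}) - b[\mathbf{P}^{-1}\mathbf{v}]\bigr) v_1,
\]
while the components $v_k$ for $k \geq 2$ belong to upper-triangular Jordan blocks attached to eigenvalues $\lambda_k$ satisfying $\text{Re}(\lambda_k) < \lambda_{PF}(\mathbf{L})$.

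First I would gather elementary preliminaries. The essential nonnegativity of $\mathbf{L}$ together with $\mathbf{c}(\mathbf{0}) = \mathbf{0}$ makes the open cone $\mathsf{K}^{++}$ positively invariant, and since $\mathbf{n}_{PF}(\mathbf{L}^T) \gg \mathbf{0}$, this forces $v_1(t) > 0$ for all $t > 0$. Assumption $(H_4)$ provides a bounded absorbing set, so $v_1$ is a priori bounded above in time.

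Second I would mirror the computation of the preceding lemma in the ODE setting and process the Jordan blocks from bottom to top. For the bottom entry $v_N$ of the last block, $v_N' = \lambda\, v_N - b[\mathbf{P}^{-1}\mathbf{v}]\, v_N$, so the complex ratio $z = v_N/v_1$ satisfies the purely linear equation $z' = (\lambda - \lambda_{PF}(\mathbf{L})) z$---the nonlinear drift $b[\mathbf{P}^{-1}\mathbf{v}]$ cancels because it appears identically in the equations for $v_N$ and $v_1$. Climbing up within and across blocks, each subsequent ratio $v_k/v_1$ satisfies a linear Cauchy problem of the form $(v_k/v_1)' = (\lambda_k - \lambda_{PF}(\mathbf{L}))(v_k/v_1) + \text{(ratios already shown to decay)}$, so Gronwall's lemma yields $|v_k(t)/v_1(t)| \leq C e^{-\gamma t}$ for the common rate $\gamma = \lambda_{PF}(\mathbf{L}) - \max\limits_{k < m} \text{Re}(\lambda_k) > 0$. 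Combined with the upper bound on $v_1$, this produces $v_k(t) \to 0$ exponentially for every $k \in [N] \setminus \{1\}$; equivalently, $(\mathbf{I} - \boldsymbol{\Pi}_{PF}(\mathbf{L})) \mathbf{u}(t) \to \mathbf{0}$ exponentially.

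Finally I would close the argument by an asymptotic-autonomy reduction. Writing $\mathbf{u}(t) = v_1(t) \mathbf{n}_{PF}(\mathbf{L}) + \mathbf{r}(t)$ with $\mathbf{r}(t) \to \mathbf{0}$ exponentially, continuity of $b$ turns the equation for $v_1$ into
\[
v_1' = \bigl(\lambda_{PF}(\mathbf{L}) - b(v_1 \mathbf{n}_{PF}(\mathbf{L}))\bigr) v_1 + \varepsilon(t) v_1, \qquad \lim_{t \to +\infty} \varepsilon(t) = 0.
\]
The unperturbed scalar field $w \mapsto (\lambda_{PF}(\mathbf{L}) - b(w\mathbf{n}_{PF}(\mathbf{L}))) w$ vanishes exactly at $0$ and at $\alpha^{\star}$, is positive on $(0,\alpha^{\star})$ and negative on $(\alpha^{\star}, +\infty)$ by the strict monotonicity of $w \mapsto b(w\mathbf{n}_{PF}(\mathbf{L}))$ granted by $(H_6)$; hence $\alpha^{\star}$ is globally attracting in $(0, +\infty)$. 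A standard asymptotic-autonomy argument transfers this conclusion to the perturbed equation, giving $v_1(t) \to \alpha^{\star}$ and therefore $\mathbf{u}(t) \to \alpha^{\star} \mathbf{n}_{PF}(\mathbf{L}) = \mathbf{v}^{\star}$. The main obstacle is organizing the Gronwall induction cleanly across the possibly complex Jordan blocks and ruling out that $v_1(t)$ approaches $0$; for the latter, the key observation is that once $\mathbf{r}(t)$ is small the equation for $v_1$ is nearly logistic and its linearization at $0$ has strictly positive rate $\lambda_{PF}(\mathbf{L})$, which prevents $v_1$ from accumulating at $0$ asymptotically.
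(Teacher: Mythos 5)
Your proposal is correct and reaches the conclusion by the same overall architecture as the paper (kill the transverse part, then reduce to a perturbed scalar logistic equation), but the two middle steps are handled differently, and your versions are in fact more self-contained in the ODE setting. The paper first proves $\inf_{t>0} u(t)>0$ for $u=\mathbf{n}_{PF}\left(\mathbf{L}\right)^{T}\boldsymbol{\Pi}_{PF}\left(\mathbf{L}\right)\mathbf{u}$ (via a local-minimum argument plus the persistence theorem of the prequel) precisely so that it can invoke \lemref{N-1_component_vanish}, whose proof runs a parabolic differential inequality for $\left|v_{k}/v_{1}\right|^{2}$; you instead observe that in the ODE case the nonlinearity cancels exactly in the ratios, so that $\left(v_{k}/v_{1}\right)_{k\geq2}$ solves the autonomous linear system $\mathbf{z}'=\left(\mathbf{J}'-\lambda_{PF}\left(\mathbf{L}\right)\mathbf{I}\right)\mathbf{z}$ and decays with no lower bound on $v_{1}$ needed \textemdash{} a genuine simplification, at the price of only working for $\left(E_{KPP}^{0}\right)$ and not generalizing to the PDE lemma. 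Likewise you replace the appeal to the persistence theorem by the elementary barrier argument that the perturbed field is bounded below by $\frac{\lambda_{PF}\left(\mathbf{L}\right)}{4}v_{1}$ near $0$ for large times, which is clean and correct. Two cosmetic points to fix: a Jordan block of size $s\geq2$ produces factors $t^{s-1}\text{e}^{-\gamma t}$, so your bound $\left|v_{k}/v_{1}\right|\leq C\text{e}^{-\gamma t}$ holds only for rates strictly smaller than the spectral gap $\lambda_{PF}\left(\mathbf{L}\right)-\max_{k<m}\text{Re}\left(\lambda_{k}\right)$ (harmless for the conclusion); and the upper bound on $v_{1}$ you attribute to $\left(H_{4}\right)$ is really the a priori boundedness result of the prequel, which the paper also uses implicitly, so you should cite it rather than rederive it.
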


\begin{proof}
Once again, the proof is very similar to that of \propref{Uniqueness_profiles}. 

Fix a positive classical solution $\mathbf{v}$ of $\left(E_{KPP}^{0}\right)$.
By \cite[Theorem 1.1]{Girardin_2016_2}, $\mathbf{v}\left(1\right)\gg\mathbf{0}$.
Hence the function $\mathbf{u}:t\mapsto\mathbf{v}\left(t+1\right)$
is a classical solution of $\left(E_{KPP}^{0}\right)$ set in $\left(0,+\infty\right)$
which is positive in $[0,+\infty)$ (whereas $\mathbf{v}\left(0\right)$
might have null components) and which converges to $\mathbf{v}^{\star}$
if and only if $\mathbf{v}$ converges to $\mathbf{v}^{\star}$.

The function $u=\mathbf{n}_{PF}\left(\mathbf{L}\right)^{T}\boldsymbol{\Pi}_{PF}\left(\mathbf{L}\right)\mathbf{u}$
satisfies
\[
u'=\lambda_{PF}\left(\mathbf{L}\right)u-b\left[\mathbf{u}\right]u.
\]

In order to apply \lemref{N-1_component_vanish}, it suffices to verify
\[
\inf_{t\in\left(0,+\infty\right)}u\left(t\right)>0.
\]
On one hand, since $\mathbf{u}$ is positive in $[0,+\infty)$, $u$
is positive in $[0,+\infty)$ as well. Hence any $t>0$ such that
$u'\left(t\right)=0$ is such that $b\left(\mathbf{u}\left(t\right)\right)=\lambda_{PF}\left(\mathbf{L}\right)$
and consequently any local minimum is larger than some positive constant.
On the other hand, $\liminf\limits _{t\to+\infty}u>0$ is a direct
consequence of the persistence result \cite[Theorem 1.3]{Girardin_2016_2}. 

Since $b$ is Lipschitz-continuous on the compact set $\left\{ \mathbf{v}\in\mathsf{K}\ |\ \mathbf{v}\leq\mathbf{k}\right\} $,
there exists $C_{1}>0$ such that
\[
\left|b\left[u\mathbf{n}_{PF}\left(\mathbf{L}\right)\right]-b\left[\mathbf{u}\right]\right|\leq C_{1}\left|\left(\mathbf{I}-\boldsymbol{\Pi}_{PF}\left(\mathbf{L}\right)\right)\mathbf{u}\right|\text{ in }[0,+\infty),
\]
Now $u$ satisfies
\[
u'=\lambda_{PF}\left(\mathbf{L}\right)u-b\left[u\mathbf{n}_{PF}\left(\mathbf{L}\right)\right]u+\left(b\left[u\mathbf{n}_{PF}\left(\mathbf{L}\right)\right]-b\left[\mathbf{u}\right]\right)u,
\]
 with, by \lemref{N-1_component_vanish},
\[
\left(b\left[u\mathbf{n}_{PF}\left(\mathbf{L}\right)\right]-b\left[\mathbf{u}\right]\right)u=o\left(u\right)\text{ as }t\to+\infty.
\]

It follows easily (see for instance \cite{Leman_Meleard_}) that $u$
converges to the unique constant $\alpha^{\star}>0$ such that $\lambda_{PF}\left(\mathbf{L}\right)=b\left[\alpha^{\star}\mathbf{n}_{PF}\left(\mathbf{L}\right)\right]$,
which precisely means
\[
\lim\limits _{t\to+\infty}\mathbf{u}\left(t\right)=\mathbf{v}^{\star}.
\]
\end{proof}
Finally, at the expense of assuming $\mathbf{d}=\mathbf{1}_{N,1}$,
we extend the previous result to non-homogeneous initial data. 
\begin{prop}
\label{prop:Local_global_asymptotic_stability} Assume $\mathbf{d}=\mathbf{1}_{N,1}$.
Then, for all bounded intervals $I\subset\mathbb{R}$, all bounded
positive classical solutions $\mathbf{u}$ of $\left(E_{KPP}\right)$
set in $\left(0,+\infty\right)\times\mathbb{R}$ satisfy
\[
\lim_{t\to+\infty}\sup_{x\in I}\left|\mathbf{u}\left(t,x\right)-\mathbf{v}^{\star}\right|=0.
\]

Consequently, if $\mathbf{d}=\mathbf{1}_{N,1}$, the set of bounded
nonnegative classical solutions of $\left(S_{KPP}\right)$ is exactly
$\left\{ \mathbf{0},\mathbf{v}^{\star}\right\} $.
\end{prop}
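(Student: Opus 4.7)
The plan is to mirror the reduction carried out in \propref{Global_asymptotic_stability_for_E_KPP0} but to transfer it to the PDE setting via a parabolic compactness argument. Suppose by contradiction that there exist $\varepsilon_{0}>0$, $t_{n}\to+\infty$ and $x_{n}\in I$ such that $\left|\mathbf{u}\left(t_{n},x_{n}\right)-\mathbf{v}^{\star}\right|\geq\varepsilon_{0}$. Since $\mathbf{D}=\mathbf{I}_{N}$ makes $\left(E_{KPP}\right)$ uniformly parabolic and $\mathbf{u}$ is globally bounded, standard interior parabolic estimates yield uniform $\mathscr{C}_{loc}^{2,1}$-bounds on the translates $\mathbf{u}_{n}:\left(t,x\right)\mapsto\mathbf{u}\left(t+t_{n},x+x_{n}\right)$, so up to extraction $\left(\mathbf{u}_{n}\right)_{n\in\mathbb{N}}$ converges in $\mathscr{C}_{loc}^{2,1}\left(\mathbb{R}^{2},\mathbb{R}^{N}\right)$ to a bounded nonnegative entire classical solution $\mathbf{u}_{\infty}$ of $\left(E_{KPP}\right)$ satisfying $\left|\mathbf{u}_{\infty}\left(0,0\right)-\mathbf{v}^{\star}\right|\geq\varepsilon_{0}$. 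The strong parabolic maximum principle for essentially nonnegative weakly coupled systems forces $\mathbf{u}_{\infty}$ to be either identically $\mathbf{0}$ or strictly positive on $\mathbb{R}^{2}$, and the first alternative is excluded by the persistence/spreading result from \cite{Girardin_2016_2}, whose positive floor behind the spreading front transfers in the locally uniform limit to $\inf_{\mathbb{R}^{2}}\mathbf{u}_{\infty}\gg\mathbf{0}$, using that the $x_{n}$'s remain in the bounded set $I$.

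I would then establish the rigidity $\mathbf{u}_{\infty}\equiv\mathbf{v}^{\star}$ via \lemref{N-1_component_vanish}. For every $s\in\mathbb{R}$, applying the lemma to the restriction of $\mathbf{u}_{\infty}$ to $\left(s,+\infty\right)\times\mathbb{R}$ yields $\left|\left(\mathbf{I}-\boldsymbol{\Pi}_{PF}\left(\mathbf{L}\right)\right)\mathbf{u}_{\infty}\left(t,x\right)\right|\leq C\text{e}^{-\gamma\left(t-s\right)}$ uniformly in $x$, where $C$ depends only on $\left\Vert \mathbf{u}_{\infty}\right\Vert _{\infty}$ and the global lower bound, hence is independent of $s$. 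Sending $s\to-\infty$ forces $\left(\mathbf{I}-\boldsymbol{\Pi}_{PF}\left(\mathbf{L}\right)\right)\mathbf{u}_{\infty}\equiv\mathbf{0}$, so $\mathbf{u}_{\infty}=u_{\infty}\mathbf{n}_{PF}\left(\mathbf{L}\right)$ where $u_{\infty}$ is a bounded positive entire solution of the scalar Fisher\textendash KPP equation $\partial_{t}u-\partial_{xx}u=\lambda_{PF}\left(\mathbf{L}\right)u-b\left[u\mathbf{n}_{PF}\left(\mathbf{L}\right)\right]u$ bounded below by a positive constant. A classical Liouville-type argument for the scalar KPP equation (comparing $u_{\infty}$ above and below with the underlying ODE flow and letting the initial time tend to $-\infty$) then forces $u_{\infty}\equiv\alpha^{\star}$, whence $\mathbf{u}_{\infty}\equiv\mathbf{v}^{\star}$, contradicting $\left|\mathbf{u}_{\infty}\left(0,0\right)-\mathbf{v}^{\star}\right|\geq\varepsilon_{0}$.

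For the consequence, any bounded nonnegative classical solution $\mathbf{u}^{\star}$ of $\left(S_{KPP}\right)$ is a time-independent solution of $\left(E_{KPP}\right)$; if $\mathbf{u}^{\star}\not\equiv\mathbf{0}$, the strong elliptic maximum principle gives $\mathbf{u}^{\star}\gg\mathbf{0}$ and the first part then yields $\mathbf{u}^{\star}\equiv\mathbf{v}^{\star}$. The main obstacle I expect is extracting a genuinely \emph{global} positive lower bound on $\mathbf{u}_{\infty}$: the persistence result in \cite{Girardin_2016_2} typically produces a floor only on the region already invaded at time $t$, so one must verify that this floor does not deteriorate along the sequence $\left(t_{n},x_{n}\right)$. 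Should a uniform floor be unavailable, the fallback is to localize the argument of \lemref{N-1_component_vanish} on arbitrarily large balls by controlling $z_{k}=\text{e}^{\gamma_{k}t/2}\left|v_{k}/v_{1}\right|^{2}$ with the parabolic Harnack inequality applied to the scalar equation satisfied by $v_{1}=\mathbf{n}_{PF}\left(\mathbf{L}\right)^{T}\boldsymbol{\Pi}_{PF}\left(\mathbf{L}\right)\mathbf{u}_{\infty}$.
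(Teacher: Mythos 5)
Your argument follows the paper's proof essentially verbatim: parabolic compactness to pass to an entire solution, \lemref{N-1_component_vanish} applied on time-shifted windows with a constant independent of the shift so as to collapse the solution onto the Perron--Frobenius direction, and a scalar KPP Liouville argument to conclude $\tilde{u}=\alpha^{\star}$. The ``main obstacle'' you flag is resolved in the paper exactly along your main line, by citing \cite[Theorems 1.2 and 1.3]{Girardin_2016_2} to get that the limiting entire solution is valued in $\prod_{i=1}^{N}\left[\nu,g_{i}\left(0\right)\right]$ with $\nu>0$, i.e.\ a uniform positive floor, so your fallback localization is not needed.
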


\begin{proof}
Let $\left(t_{n}\right)_{n\in\mathbb{N}}\in\left(0,+\infty\right)^{\mathbb{N}}$
such that $\lim\limits _{n\to+\infty}t_{n}=+\infty$. Then, by classical
parabolic estimates (Lieberman \cite{Lieberman_2005}) and a diagonal
extraction process, the sequence
\[
\left(\mathbf{u}_{n}\right)_{n\in\mathbb{N}}=\left(\left(t,x\right)\mapsto\mathbf{u}\left(t+t_{n},x\right)\right)_{n\in\mathbb{N}}
\]
 converges up to extraction to an entire classical solution of $\left(E_{KPP}\right)$
valued in $\prod\limits _{i=1}^{N}\left[\nu,g_{i}\left(0\right)\right]$
(see \cite[Theorems 1.2 and 1.3]{Girardin_2016_2}).

Now let us prove that $\mathbf{v}^{\star}$ is the unique bounded
entire classical solution $\tilde{\mathbf{u}}$ of $\left(E_{KPP}\right)$
satisfying
\[
\left(\inf_{\mathbb{R}^{2}}\tilde{u}_{i}\right)_{i\in\left[N\right]}\gg\mathbf{0}.
\]
 Let $\tilde{\mathbf{u}}$ be such a solution. The function $\tilde{u}=\mathbf{n}_{PF}\left(\mathbf{L}\right)^{T}\boldsymbol{\Pi}_{PF}\left(\mathbf{L}\right)\tilde{\mathbf{u}}$
satisfies
\[
\partial_{t}\tilde{u}-\partial_{xx}\tilde{u}=\lambda_{PF}\left(\mathbf{L}\right)\tilde{u}-b\left[\tilde{u}\mathbf{n}_{PF}\left(\mathbf{L}\right)\right]\tilde{u}+\left(b\left[\tilde{u}\mathbf{n}_{PF}\left(\mathbf{L}\right)\right]-b\left[\tilde{\mathbf{u}}\right]\right)\tilde{u}.
\]
 For all $\tau\in\mathbb{R}$, 
\[
\inf_{\left(t,x\right)\in\left(0,+\infty\right)\times\mathbb{R}}\tilde{u}\left(t+\tau,x\right)>0.
\]
By \lemref{N-1_component_vanish}, there exists $C>0$ such that,
for all $t>0$ and all $\tau\in\mathbb{R}$,
\[
\sup_{x\in\mathbb{R}}\left|\tilde{u}\left(t+\tau,x\right)\mathbf{n}_{PF}\left(\mathbf{L}\right)-\tilde{\mathbf{u}}\left(t+\tau,x\right)\right|\leq C\text{e}^{-\gamma t}.
\]
It follows that for all $t>0$,
\[
\sup_{\left(t',x\right)\in\mathbb{R}^{2}}\left|\tilde{u}\left(t',x\right)\mathbf{n}_{PF}\left(\mathbf{L}\right)-\tilde{\mathbf{u}}\left(t',x\right)\right|\leq C\text{e}^{-\gamma t}
\]
and then passing the right-hand side to the limit $t\to+\infty$,
we find
\[
\tilde{u}\left(t',x\right)\mathbf{n}_{PF}\left(\mathbf{L}\right)=\tilde{\mathbf{u}}\left(t',x\right)\text{ for all }\left(t',x\right)\in\mathbb{R}^{2}.
\]
 Consequently, $\tilde{u}$ satisfies
\[
\partial_{t}\tilde{u}-\partial_{xx}\tilde{u}=\lambda_{PF}\left(\mathbf{L}\right)\tilde{u}-b\left[\tilde{u}\mathbf{n}_{PF}\left(\mathbf{L}\right)\right]\tilde{u}.
\]
By standard results on the scalar KPP equation, $\tilde{u}=\alpha^{\star}$
in $\mathbb{R}^{2}$, that is $\tilde{\mathbf{u}}=\mathbf{v}^{\star}$.

A standard compactness argument ends the proof. 
\end{proof}

\section{The back of the fronts: vanishingly small mutations in monostable
two-component systems}

In this section, we assume $\left(H_{7}\right)$ and recall the existence
and uniqueness of $\left(\mathbf{r},\eta,\mathbf{m}\right)\in\mathsf{K}^{++}\times\left(0,+\infty\right)\times\mathsf{S}^{++}\left(\mathbf{0},1\right)$
such that
\[
\mathbf{L}=\mathbf{R}+\eta\left(\begin{matrix}-1 & 1\\
1 & -1
\end{matrix}\right)\mathbf{M}\text{ with }\left(\mathbf{R},\mathbf{M}\right)=\left(\text{diag}\left(\mathbf{r}\right),\text{diag}\left(\mathbf{m}\right)\right).
\]
The various objects and notations of the problem now depend \textit{a
priori} on $\eta$ and a subscript $_{\eta}$ might be added accordingly.
The following definitions are recalled: 
\[
\alpha_{i}=\frac{r_{i}}{c_{i,i}}\text{ for all }i\in\left\{ 1,2\right\} ,
\]
\[
\mathbf{v}_{m}=\frac{1}{\det\mathbf{C}}\left(\begin{matrix}r_{1}c_{2,2}-r_{2}c_{1,2}\\
r_{2}c_{1,1}-r_{1}c_{2,1}
\end{matrix}\right)\text{ if }\det\mathbf{C}\neq0,
\]
\[
\mathscr{E}=\left\{ \left(\eta,\mathbf{p},c\right)\in\left(0,+\infty\right)^{2}\times\mathscr{C}^{2}\left(\mathbb{R},\mathbb{R}^{2}\right)\ |\ \mathbf{p}\in\mathscr{P}_{c,\eta},\ c\geq c_{\eta}^{\star}\right\} ,
\]
\[
\partial_{t}\mathbf{u}-\mathbf{D}\partial_{xx}\mathbf{u}=\mathbf{R}\mathbf{u}-\left(\mathbf{C}\mathbf{u}\right)\circ\mathbf{u}.\quad\left(E_{KPP}\right)_{0}
\]

\subsection{Preparatory lemmas}

The proof of \thmref{(H7)_Monostable} will use the following lemmas
which are of independent interest.
\begin{lem}
\label{lem:(H7)_Uniform_L_infty_estimates} Let $i\in\left\{ 1,2\right\} $,
$j=3-i$ and
\[
\eta\in\left(0,\frac{r_{i}c_{i,j}}{m_{j}c_{i,i}}\right].
\]

Then for all traveling wave solutions $\left(\mathbf{p},c\right)$
of $\left(E_{KPP}\right)_{\eta}$, 
\[
p_{i}\leq\alpha_{i}\text{ in }\mathbb{R}.
\]
\end{lem}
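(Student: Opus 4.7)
The plan is to run a maximum-principle argument on the scalar equation satisfied by $p_{i}$, treating the cross-term involving $p_{j}$ as a perturbation whose sign is controlled by the $\eta$-smallness hypothesis. Set $M_{i}=\sup_{\mathbb{R}}p_{i}$; since the profile is bounded, positive and vanishes at $+\infty$, we have $M_{i}\in(0,+\infty)$, and the goal is to prove $M_{i}\leq\alpha_{i}$.

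\textbf{Step 1 (attained case).} Suppose first that $M_{i}$ is attained at some $\xi^{\star}\in\mathbb{R}$. Then $p_{i}'(\xi^{\star})=0$ and $p_{i}''(\xi^{\star})\leq 0$, so the $i$-th component of $(TW[c])$, which reads
\[
-d_{i}p_{i}''-cp_{i}'=p_{i}(r_{i}-c_{i,i}p_{i}-c_{i,j}p_{j})+\eta(m_{j}p_{j}-m_{i}p_{i}),
\]
evaluated at $\xi^{\star}$, rearranges to
\[
0\leq M_{i}(r_{i}-c_{i,i}M_{i}-\eta m_{i})+p_{j}(\xi^{\star})(\eta m_{j}-c_{i,j}M_{i}).
\]
Assume for contradiction that $M_{i}>\alpha_{i}$. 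The factor $r_{i}-c_{i,i}M_{i}<0$ makes the first summand strictly negative. The hypothesis $\eta\leq\frac{r_{i}c_{i,j}}{m_{j}c_{i,i}}=\frac{\alpha_{i}c_{i,j}}{m_{j}}$ gives $\eta m_{j}\leq c_{i,j}\alpha_{i}<c_{i,j}M_{i}$, and together with $p_{j}\geq 0$ this makes the second summand nonpositive. The sum is then strictly negative, a contradiction, so $M_{i}\leq\alpha_{i}$.

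\textbf{Step 2 (reduction to the attained case).} If $M_{i}$ is not attained on $\mathbb{R}$, pick $(\xi_{n})$ with $p_{i}(\xi_{n})\to M_{i}$. Since $p_{i}\to 0$ at $+\infty$ and since accumulation of $(\xi_{n})$ at any finite point would, by continuity of $p_{i}$, yield attainment, necessarily $\xi_{n}\to-\infty$. Standard interior elliptic estimates applied to $(TW[c])$, using the boundedness of $\mathbf{p}$ and the $\mathscr{C}^{1}$-regularity of $\mathbf{c}$, bound the translates $(\mathbf{p}(\cdot+\xi_{n}))_{n}$ uniformly in $\mathscr{C}^{2}_{\mathrm{loc}}(\mathbb{R},\mathbb{R}^{2})$; a diagonal extraction then produces a bounded classical solution $\mathbf{p}_{\infty}$ of $(TW[c])$ satisfying $p_{\infty,i}(0)=M_{i}=\sup_{\mathbb{R}}p_{\infty,i}$ and $p_{\infty,j}(0)\geq 0$. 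Step~1 applied to $\mathbf{p}_{\infty}$ at $\xi^{\star}=0$ (which uses only $p_{\infty,j}(0)\geq 0$) yields the desired contradiction, completing the proof.

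I do not expect any serious obstacle: the heart of the proof is the one-line observation that, for $p_{i}$ to attain a value above $\alpha_{i}$ at an interior critical point, the mutation gain $\eta m_{j}p_{j}$ must overcome the competitive loss $c_{i,j}M_{i}p_{j}$, which the hypothesis $\eta m_{j}\leq c_{i,j}\alpha_{i}$ precisely prevents. The only mildly technical ingredient is the translation-compactness of Step~2, but this is routine and requires nothing beyond the uniform $\mathscr{C}^{2}_{\mathrm{loc}}$ bounds already implicit in the construction of the profile.
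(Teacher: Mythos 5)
Your proof is correct and takes essentially the same route as the paper: the heart in both cases is the decomposition $p_{i}\left(r_{i}-\eta m_{i}-c_{i,i}p_{i}\right)+p_{j}\left(\eta m_{j}-c_{i,j}p_{i}\right)$ of the reaction term, together with the observation that the hypothesis $\eta\leq\frac{r_{i}c_{i,j}}{m_{j}c_{i,i}}$ forces both summands to be nonpositive as soon as $p_{i}\geq\alpha_{i}$. The only difference is that the paper delegates the maximum-principle machinery to the proof of Theorem 1.5, ii) of the author's earlier paper, whereas you carry it out explicitly (interior maximum plus translation-compactness for a non-attained supremum), which is a correct and self-contained substitute.
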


\begin{rem*}
This lemma is straightforwardly generalizable to the case $N>2$.
\end{rem*}
\begin{proof}
Having in mind the proof of \cite[Theorem 1.5, ii)]{Girardin_2016_2},
it suffices to investigate the sign of
\[
r_{i}p_{i}-\eta m_{i}p_{i}+\eta m_{j}p_{j}-\left(c_{i,i}p_{i}+c_{i,j}p_{j}\right)p_{i}=p_{i}\left(r_{i}-\eta m_{i}-c_{i,i}p_{i}\right)+p_{j}\left(\eta m_{j}-c_{i,j}p_{i}\right).
\]
This quantity is nonpositive provided
\[
p_{i}\geq\max\left(\frac{r_{i}-\eta m_{i}}{c_{i,i}},\frac{\eta m_{j}}{c_{i,j}}\right).
\]
Since
\[
\frac{r_{i}}{c_{i,i}}\geq\frac{r_{i}-\eta m_{i}}{c_{i,i}}\text{ for all }\eta\geq0,
\]
\[
\frac{r_{i}}{c_{i,i}}\geq\frac{\eta m_{j}}{c_{i,j}}\text{ for all }\eta\leq\frac{r_{i}c_{i,j}}{m_{j}c_{i,i}},
\]
 we deduce indeed $p_{i}\leq\frac{r_{i}}{c_{i,i}}$.
\end{proof}
{} 
\begin{lem}
\label{lem:(H7)_Monostable_Uniform_estimates_from_below} Let $i\in\left\{ 1,2\right\} $,
$j=3-i$ and assume
\[
\frac{r_{i}}{r_{j}}>\frac{c_{i,j}}{c_{j,j}}.
\]

Let
\[
\overline{\eta}_{i}=\frac{1}{2}\min\left(\frac{r_{j}c_{j,i}}{m_{i}c_{j,j}},\frac{r_{j}}{m_{i}}\left(\frac{r_{i}}{r_{j}}-\frac{c_{i,j}}{c_{j,j}}\right)\right),
\]
\[
\rho_{i}=\frac{1}{2}\frac{r_{j}}{c_{i,i}}\left(\frac{r_{i}}{r_{j}}-\frac{c_{i,j}}{c_{j,j}}\right).
\]

Then for all $\rho\in\left(0,\rho_{i}\right]$, all $\eta\in\left(0,\overline{\eta}_{i}\right)$
and all traveling wave solutions $\left(\mathbf{p},c\right)$ of $\left(E_{KPP}\right)_{\eta}$,
there exists a unique 
\[
\xi_{\rho}\in p_{i}^{-1}\left(\left\{ \rho\right\} \right).
\]

Furthermore $p_{i}$ is decreasing in $\left(\xi_{\rho},+\infty\right)$
and $p_{i}-\rho$ is positive in $\left(-\infty,\xi_{\rho}\right)$. 
\end{lem}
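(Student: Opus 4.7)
The plan is to exploit the traveling wave equation at critical points of $p_i$ where $p_i$ takes small values, showing that every such point is a strict local maximum. The uniqueness and monotonicity parts of the conclusion then reduce to elementary topology on the sublevel set $\{p_i\leq\rho\}$, while the existence of $\xi_\rho$ is the genuinely delicate step.

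By \lemref{(H7)_Uniform_L_infty_estimates} applied with the roles of $i$ and $j$ interchanged, which is licit since $\eta<\overline{\eta}_{i}\leq\tfrac{1}{2}\frac{r_{j}c_{j,i}}{m_{i}c_{j,j}}$, we have $p_{j}\leq\alpha_{j}=r_{j}/c_{j,j}$ on $\mathbb{R}$. Set $\kappa = r_{j}\bigl(\tfrac{r_{i}}{r_{j}}-\tfrac{c_{i,j}}{c_{j,j}}\bigr)>0$, so that $\rho_{i}=\kappa/(2c_{i,i})$ and $\overline{\eta}_{i}\leq\kappa/(2m_{i})$. At any $\xi^{\star}\in\mathbb{R}$ with $p_{i}'(\xi^{\star})=0$ and $0<p_{i}(\xi^{\star})\leq\rho$, evaluating the traveling wave equation, substituting $c_{i,j}p_{j}(\xi^{\star})\leq c_{i,j}\alpha_{j}$, and discarding the non-negative mutation term $\eta m_{j}p_{j}(\xi^{\star})$ yields
\[
-d_{i}p_{i}''(\xi^{\star})\geq p_{i}(\xi^{\star})\bigl(\kappa-\eta m_{i}-c_{i,i}p_{i}(\xi^{\star})\bigr).
\]
Since $\eta m_{i}<\kappa/2$ (strictly, as $\eta<\overline{\eta}_{i}$) and $c_{i,i}p_{i}(\xi^{\star})\leq\kappa/2$, the right-hand side is strictly positive; hence $p_{i}''(\xi^{\star})<0$ and $\xi^{\star}$ is a strict local maximum. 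In particular, $p_{i}$ admits no local minimum in $\{p_{i}\leq\rho\}$.

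Uniqueness of $\xi_{\rho}$ and the monotonicity of $p_{i}$ on $(\xi_{\rho},+\infty)$ now follow by a direct topological argument: two distinct points of $p_{i}^{-1}(\{\rho\})$ would either bracket an interior local minimum of $p_{i}$ with value strictly less than $\rho$, or themselves be local minima (the constant case $p_{i}\equiv\rho$ on an interval being excluded by unique continuation and $p_{i}(+\infty)=0$); likewise, any failure of monotonicity on $(\xi_{\rho},+\infty)$ would produce an interior local minimum there. All these scenarios contradict the strict-local-maximum property. For the existence of $\xi_{\rho}$, which amounts to $\sup p_{i}>\rho$, one argues by contradiction: if $p_{i}\leq\rho$ everywhere, then the conjunction of $p_{i}(+\infty)=0$, the traveling wave condition $\liminf_{-\infty}p_{i}>0$, and the absence of local minima in the sublevel set forces $p_{i}$ to be eventually monotone at $-\infty$ and to converge there to a positive limit $L\leq\rho\leq\rho_{i}$; by elliptic regularity, $\mathbf{p}$ itself is asymptotically constant at $-\infty$ and $\mathbf{p}(-\infty)$ is a positive steady state of the spatially homogeneous system whose $i$-component equals $L$. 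The desired contradiction is then obtained from the classification of positive steady states of the perturbed competition system for $\eta\in(0,\overline{\eta}_{i})$ in the monostable regime $\frac{r_{i}}{r_{j}}>\frac{c_{i,j}}{c_{j,j}}$.

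The main technical subtlety lies in this existence step: the differential inequality only controls the local behavior of $p_{i}$ around critical points where it is small, while the global non-triviality of the level set $p_{i}^{-1}(\{\rho\})$ requires the asymptotic analysis of $\mathbf{p}$ at $-\infty$ together with a careful accounting of the admissible back states of traveling waves in the weakly perturbed monostable competition system.
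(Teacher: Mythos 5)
Your proof is correct and follows essentially the same route as the paper: the computation at critical points of $p_{i}$ in the sublevel set (using \lemref{(H7)_Uniform_L_infty_estimates} with the roles of $i$ and $j$ exchanged to get $p_{j}\leq\alpha_{j}$, and the smallness of $\eta m_{i}$ relative to $\kappa$) is exactly the paper's estimate showing that local minima of $p_{i}$ exceed $\rho_{i}$, and the exclusion of the remaining scenarios via convergence of $\mathbf{p}$ at $-\infty$ to a positive steady state is also the paper's argument. Note only that the ``classification of positive steady states'' you invoke at the end need not be a separate input: the same algebraic inequality, applied to a constant positive solution $\mathbf{v}$ with $v_{j}\leq\alpha_{j}$, already yields $v_{i}>\rho_{i}$, which is all that is required.
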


\begin{rem*}
The following proof is mostly due to Griette\textendash Raoul \cite[Proposition 5.1]{Griette_Raoul}.
\end{rem*}
\begin{proof}
Let $\zeta\in\mathbb{R}$ such that $p_{i}\left(\zeta\right)$ is
a local minimum of $p_{i}$. Then
\[
r_{i}p_{i}\left(\zeta\right)-\eta m_{i}p_{i}\left(\zeta\right)+\eta m_{j}p_{j}\left(\zeta\right)-\left(c_{i,i}p_{i}\left(\zeta\right)+c_{i,j}p_{j}\left(\zeta\right)\right)p_{i}\left(\zeta\right)\leq0.
\]

This implies
\[
r_{i}p_{i}\left(\zeta\right)-\eta m_{i}p_{i}\left(\zeta\right)-\left(c_{i,i}p_{i}\left(\zeta\right)+c_{i,j}p_{j}\left(\zeta\right)\right)p_{i}\left(\zeta\right)<0,
\]
whence
\[
r_{i}-\eta m_{i}<c_{i,i}p_{i}\left(\zeta\right)+c_{i,j}p_{j}\left(\zeta\right),
\]
whence by \lemref{(H7)_Uniform_L_infty_estimates}
\[
r_{i}-\eta m_{i}<c_{i,i}p_{i}\left(\zeta\right)+c_{i,j}\frac{r_{j}}{c_{j,j}},
\]
and then
\begin{align*}
p_{i}\left(\zeta\right) & >\frac{1}{c_{i,i}}\left(r_{i}-\frac{r_{j}c_{i,j}}{c_{j,j}}\right)-\frac{\eta m_{i}}{c_{i,i}}\\
 & >\frac{r_{j}}{c_{i,i}}\left(\frac{r_{i}}{r_{j}}-\frac{c_{i,j}}{c_{j,j}}\right)-\frac{\overline{\eta}_{i}m_{i}}{c_{i,i}}\\
 & \geq\frac{1}{2}\frac{r_{j}}{c_{i,i}}\left(\frac{r_{i}}{r_{j}}-\frac{c_{i,j}}{c_{j,j}}\right)\\
 & =\rho_{i}.
\end{align*}

Now let $\rho\in\left(0,\rho_{i}\right]$ and $\xi_{\rho}\in p_{i}^{-1}\left(\left\{ \rho\right\} \right)$.

Since $p_{i}\left(\xi_{\rho}\right)$ cannot be a local minimum, there
exists a neighborhood of $\xi_{\rho}$ in which $p_{i}$ is strictly
monotonic. Assume it is increasing. Then by continuity of $p_{i}'$
and the previous estimate on local minima, $p_{i}$ is increasing
in $\left(-\infty,\xi_{\rho}\right)$. By classical elliptic regularity,
$\mathbf{p}$ converges as $\xi\to-\infty$ to a solution of $\mathbf{L}\mathbf{v}=\mathbf{C}\mathbf{v}\circ\mathbf{v}$,
and by \cite[Theorem 1.5, iii)]{Girardin_2016_2}, this solution is
positive. But in view of the preceding estimates, necessarily
\[
\lim_{\xi\to-\infty}p_{i}\left(\xi\right)>\rho_{i}\geq p_{i}\left(\xi_{\rho}\right),
\]
which contradicts the monotonicity of $p_{i}$ in $\left(-\infty,\xi_{\rho}\right)$.
Hence $p_{i}$ is decreasing in a neighborhood of $\xi_{\rho}$ and
then in $\left(\xi_{\rho},+\infty\right)$. Consequently, 
\[
p_{i}^{-1}\left(\left\{ \rho\right\} \right)=\left\{ \xi_{\rho}\right\} .
\]
 This holds for all $\rho\in\left(0,\rho_{i}\right]$ and therefore
ends the proof. 
\end{proof}

\subsection{Convergence at the back}

Let $i\in\left\{ 1,2\right\} $, $j=3-i$, $\left(c_{\eta}\right)_{\eta\geq0}$
and $\left(\mathbf{p}_{\eta}\right)_{\eta>0}$ such that
\[
\left\{ \begin{matrix}\left(\eta,\mathbf{p}_{\eta},c_{\eta}\right)\in\mathscr{E} & \text{for all }\eta>0,\\
c_{0}=\lim\limits _{\eta\to0}c_{\eta},
\end{matrix}\right.
\]
 and assume from now on that 
\[
\frac{r_{i}}{r_{j}}>\frac{c_{i,j}}{c_{j,j}}
\]
so that the assumptions of \thmref{(H7)_Monostable} are satisfied.
Define subsequently
\[
\mathbf{v}_{s}=\left\{ \begin{matrix}\alpha_{i}\mathbf{e}_{i} & \text{if }\frac{r_{i}}{r_{j}}\geq\frac{c_{i,i}}{c_{j,i}},\\
\mathbf{v}_{m} & \text{if }\frac{r_{i}}{r_{j}}<\frac{c_{i,i}}{c_{j,i}}.
\end{matrix}\right.
\]
\begin{prop}
\label{prop:(H7)_Monostable_Back} There exists $\left(\zeta_{\eta}\right)_{\eta>0}$
such that, as $\eta\to0$, $\left(\xi\mapsto\mathbf{p}_{\eta}\left(\xi+\zeta_{\eta}\right),c_{\eta}\right)_{\eta>0}$
converges up to extraction in $\mathscr{C}_{loc}^{2}\left(\mathbb{R},\mathbb{R}^{2}\right)\times\mathbb{R}$
to a traveling wave solution $\left(\mathbf{p}_{back},c_{0}\right)$
of $\left(E_{KPP}\right)_{0}$ achieving one of the following connections:
\begin{enumerate}
\item $\mathbf{0}$ to $\mathbf{v}_{s}$,
\item $\alpha_{j}\mathbf{e}_{j}$ to $\mathbf{v}_{s}$,
\item $\mathbf{0}$ to $\alpha_{i}\mathbf{e}_{i}$ with $\mathbf{p}$ semi-extinct.
\end{enumerate}
\end{prop}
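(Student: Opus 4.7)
The plan is to normalize each wave by tracking the leading edge of the dominant component $p_{i,\eta}$. Fix some $\rho\in(0,\rho_i)$, with $\rho_i$ as in \lemref{(H7)_Monostable_Uniform_estimates_from_below}, and for each sufficiently small $\eta$ let $\zeta_\eta$ be the unique point (provided by that lemma) such that $p_{i,\eta}(\zeta_\eta)=\rho$. Set $\tilde{\mathbf{p}}_\eta(\xi)=\mathbf{p}_\eta(\xi+\zeta_\eta)$. By \lemref{(H7)_Uniform_L_infty_estimates} one has $\tilde{p}_{i,\eta}\le \alpha_i$ uniformly, and the uniform a priori bounds of \cite{Girardin_2016_2} control $\tilde{p}_{j,\eta}$. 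Interior Schauder estimates applied to $(TW[c_\eta])_\eta$ then yield uniform $\mathscr{C}^{2,\alpha}_{loc}$ bounds, and a diagonal extraction produces a subsequence converging in $\mathscr{C}^2_{loc}(\mathbb{R},\mathbb{R}^2)$ to some $\mathbf{p}_{back}$ that solves the limiting traveling-wave equation at speed $c_0$, with $p_{back,i}(0)=\rho$.

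Next I read off the limits of $\mathbf{p}_{back}$ at $\pm\infty$. At $+\infty$, \lemref{(H7)_Monostable_Uniform_estimates_from_below} forces each $\tilde{p}_{i,\eta}$ to be decreasing on $(0,+\infty)$, so $p_{back,i}$ is nonincreasing there and admits a limit $v_i^+\in[0,\rho]$; classical elliptic regularity then gives $\mathbf{p}_{back}'(\xi)\to\mathbf{0}$ as $\xi\to+\infty$, so once I know $v_j^+=\lim p_{back,j}$ exists, the pair $(v_i^+,v_j^+)$ is an equilibrium of the kinetic limit system. Since $v_i^+<\rho_i$, the same computation as in the proof of \lemref{(H7)_Monostable_Uniform_estimates_from_below} leaves only $\mathbf{0}$ and $\alpha_j\mathbf{e}_j$ as candidates, as the $i$-components of both $\alpha_i\mathbf{e}_i$ and $\mathbf{v}_m$ exceed $\rho_i$. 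Existence of $v_j^+$ itself follows from the monotone structure of $(E_{KPP})_0$ after the standard change of variable $q_j=\alpha_j-p_{back,j}$. At $-\infty$, \lemref{(H7)_Monostable_Uniform_estimates_from_below} gives $p_{back,i}\ge\rho$ on $(-\infty,0]$, and a Poincar\'e--Bendixson / sliding argument in the cooperative two-dimensional traveling-wave phase portrait reduces the $\alpha$-limit to a single equilibrium of the kinetic system with $p_i\ge\rho$, leaving only $\alpha_i\mathbf{e}_i$ or, in the coexistence case, $\mathbf{v}_m$.

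Matching these endpoints with the three admissible connections is then essentially a case check. When $\mathbf{v}_s=\alpha_i\mathbf{e}_i$ the assumption forces $\mathbf{v}_m\notin\mathsf{K}^{++}$, so the only back state is $\alpha_i\mathbf{e}_i=\mathbf{v}_s$, and one recovers case 1 or 2, with case 3 corresponding precisely to $p_{back,j}\equiv 0$. When $\mathbf{v}_s=\mathbf{v}_m$, the strong maximum principle applied to the scalar equation satisfied by $p_{back,j}$ splits two subcases: either $p_{back,j}\equiv 0$, in which case the back must be $\alpha_i\mathbf{e}_i$ and this is case 3; or $p_{back,j}>0$ throughout, which together with the instability of $\alpha_i\mathbf{e}_i$ transversely to $\text{span}(\mathbf{e}_i)$ rules out an $\alpha$-limit at $\alpha_i\mathbf{e}_i$ and forces the back to be $\mathbf{v}_m$, giving case 1 or 2.

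The hardest step will be the analysis at $-\infty$: establishing that the $\alpha$-limit is a single equilibrium, rather than a larger invariant set or a periodic orbit of the traveling-wave ODE, requires careful use of the cooperative structure of $(E_{KPP})_0$; and in the coexistence regime ruling out a nontrivial connection of the form $\alpha_j\mathbf{e}_j\to\alpha_i\mathbf{e}_i$ (which would violate the statement) is the subtlest point, and should rely on the strong maximum principle combined with the transverse instability of $\alpha_i\mathbf{e}_i$ under the limit kinetic dynamics.
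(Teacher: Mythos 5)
Your skeleton coincides with the paper's: the same normalization $p_{\eta,i}(\zeta_\eta)=\rho$ furnished by \lemref{(H7)_Monostable_Uniform_estimates_from_below}, the same compactness step, the same identification of the admissible limits at $+\infty$ via the bound $\lim_{+\infty}p_{back,i}\le\rho\le\rho_i$, and the same final case check. The problem is that at the two points you yourself flag as hardest, the arguments you propose either do not apply or are only heuristics. At $-\infty$ you invoke ``a Poincar\'e--Bendixson / sliding argument in the cooperative two-dimensional traveling-wave phase portrait''. The traveling-wave ODE for a two-component system is \emph{four}-dimensional (each component contributes $(p_k,p_k')$), so Poincar\'e--Bendixson is unavailable, and the cooperative structure of the kinetics does not planarize the profile equation. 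The paper's mechanism is entirely different and is the key idea you are missing: for any sequence $\xi_n\to-\infty$ it extracts a limit $\hat{\mathbf p}$ of the translates, observes that $\hat{\mathbf p}$ is a \emph{stationary} solution of the parabolic system in the moving frame squeezed between $(\rho,0)$ and $(\alpha_i,\alpha_j)$, and compares it, through the comparison principle for two-component competitive parabolic systems, with the spatially homogeneous solution issued from $(\rho,\sup\hat p_j)$, whose large-time behavior is dictated by the known global dynamics of the planar competitive ODE; this forces $\hat{\mathbf p}\in\{\mathbf v_s,\alpha_i\mathbf e_i\}$, and the isolatedness of these two states then upgrades subsequential convergence to an actual limit at $-\infty$. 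Without some substitute for this squeeze you have no control on the $\alpha$-limit set.

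The second gap is the exclusion of the connection $\alpha_j\mathbf e_j\to\alpha_i\mathbf e_i$ and the proof that a connection $\mathbf 0\to\alpha_i\mathbf e_i$ with $\alpha_i\mathbf e_i\neq\mathbf v_s$ must be semi-extinct. Appealing to ``the instability of $\alpha_i\mathbf e_i$ transversely to $\text{span}(\mathbf e_i)$'' is the right intuition but not a proof: what must be excluded is a positive $p_j$ tending to $0$ at $-\infty$ while $r_j-c_{j,i}\alpha_i>0$. The paper does this with an elementary integrating-factor argument: multiplying the $j$-equation by $\varphi(\xi)=\text{e}^{c_0\xi/d_j}$ makes $\varphi p_j'$ decreasing on the maximal left interval where the reaction coefficient is positive, with limit $0$ at $-\infty$, hence negative there, so $p_j$ is decreasing and must become negative --- a contradiction. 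A smaller lacuna of the same kind: the existence of $\lim_{+\infty}p_{back,j}$ does not ``follow from the monotone structure'' of $\left(E_{KPP}\right)_0$ (cooperativity gives a comparison principle for the evolution, not convergence of a fixed profile); the paper obtains it by noting that every local minimum of a positive $p_j$ satisfies $r_j\le c_{j,j}p_j+c_{j,i}p_i$, so that once $p_i\to0$ the values at local minima accumulate at $\alpha_j$ and $p_j$ converges, its only possible limits being $0$ and $\alpha_j$.
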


\begin{proof}
Let $\rho=\min\left(\rho_{i},v_{s,i}\right)$. By virtue of \lemref{(H7)_Monostable_Uniform_estimates_from_below},
for all $\eta>0$, there exists a unique $\zeta_{\eta}$ such that:
\begin{itemize}
\item $p_{\eta,i}$ is decreasing in $\left(\zeta_{\eta},+\infty\right)$, 
\item $p_{\eta,i}\left(\zeta_{\eta}\right)=\rho$,
\item $p_{\eta,i}-\rho$ is positive in $\left(-\infty,\zeta_{\eta}\right)$.
\end{itemize}
By \lemref{(H7)_Uniform_L_infty_estimates}, classical elliptic estimates
(Gilbarg\textendash Trudinger \cite{Gilbarg_Trudin}) and a diagonal
extraction process, $\left(\xi\mapsto\mathbf{p}_{\eta}\left(\xi+\xi_{\eta}\right)\right)_{\eta>0}$
converges in $\mathscr{C}_{loc}^{2}$ up to extraction. Let $\mathbf{p}$
be its limit. We have directly $\mathbf{0}\leq\mathbf{p}\leq\boldsymbol{\alpha}$
in $\mathbb{R}$. In view of the normalization, we also have:
\begin{itemize}
\item $p_{i}$ is nonincreasing in $\left(0,+\infty\right)$, 
\item $p_{i}\left(0\right)=\rho$,
\item $p_{i}-\rho$ is nonnegative in $\left(-\infty,0\right)$.
\end{itemize}
Let $\left(\xi_{n}\right)_{n\in\mathbb{N}}$ such that $\xi_{n}\to-\infty$
as $n\to+\infty$. Defining
\[
\hat{\mathbf{p}}_{n}:\xi\mapsto\mathbf{p}\left(\xi+\xi_{n}\right)\text{ for all }n\in\mathbb{N},
\]
 by classical elliptic estimates and a diagonal extraction process
again, $\left(\hat{\mathbf{p}}_{n}\right)_{n\in\mathbb{N}}$ converges
up to extraction in $\mathscr{C}_{loc}^{2}$ to a function $\hat{\mathbf{p}}$
satisfying
\[
-\mathbf{D}\hat{\mathbf{p}}''-c\hat{\mathbf{p}}'=\mathbf{R}\hat{\mathbf{p}}-\left(\mathbf{C}\hat{\mathbf{p}}\right)\circ\hat{\mathbf{p}}
\]
and such that
\[
\left(\rho,0\right)\leq\left(\hat{p}_{i},\hat{p}_{j}\right)\leq\left(\alpha_{i},\alpha_{j}\right).
\]
In particular, $\hat{\mathbf{p}}$ is a stationary solution of
\[
\left\{ \begin{matrix}\partial_{t}\mathbf{u}-\partial_{xx}\mathbf{u}-c_{0}\partial_{x}\mathbf{u}=\mathbf{R}\mathbf{u}-\left(\mathbf{C}\mathbf{u}\right)\circ\mathbf{u} & \text{in }\left(0,+\infty\right)\times\mathbb{R}\\
\mathbf{u}\left(0,x\right)=\hat{\mathbf{p}}\left(x\right) & \text{for all }x\in\mathbb{R}.
\end{matrix}\right.
\]
Applying the comparison principle for two-components competitive parabolic
systems to $\hat{\mathbf{p}}$ and to the solution of
\[
\left\{ \begin{matrix}\partial_{t}\mathbf{u}-\partial_{xx}\mathbf{u}-c_{0}\partial_{x}\mathbf{u}=\mathbf{R}\mathbf{u}-\left(\mathbf{C}\mathbf{u}\right)\circ\mathbf{u} & \text{in }\left(0,+\infty\right)\times\mathbb{R}\\
\left(u_{i},u_{j}\right)\left(0,x\right)=\left(\rho,\sup\hat{p}_{j}\right) & \text{for all }x\in\mathbb{R},
\end{matrix}\right.
\]
 which is homogeneous in space and is therefore the solution of
\[
\left\{ \begin{matrix}\partial_{t}\mathbf{u}=\mathbf{R}\mathbf{u}-\left(\mathbf{C}\mathbf{u}\right)\circ\mathbf{u} & \text{in }\left(0,+\infty\right)\times\mathbb{R}\\
\left(u_{i},u_{j}\right)\left(0,x\right)=\left(\rho,\sup\hat{p}_{j}\right) & \text{for all }x\in\mathbb{R},
\end{matrix}\right.
\]
we directly obtain $\hat{\mathbf{p}}=\mathbf{v}_{s}$ if $\sup\hat{p}_{j}>0$
and $\hat{\mathbf{p}}=\alpha_{i}\mathbf{e}_{i}$ if $\sup\hat{p}_{j}=0$.
In other words, if $\mathbf{v}_{s}=\alpha_{i}\mathbf{e}_{i}$, $\hat{\mathbf{p}}=\alpha_{i}\mathbf{e}_{i}$,
and if $\mathbf{v}_{s}=\mathbf{v}_{m}$, $\hat{\mathbf{p}}\in\left\{ \mathbf{v}_{s},\alpha_{i}\mathbf{e}_{i}\right\} $.
Since $\mathbf{v}_{s}$ and $\alpha_{i}\mathbf{e}_{i}$ are isolated
steady states and $\mathbf{p}$ is continuous, the last diagonal extraction
was not necessary and $\left(\hat{\mathbf{p}}_{n}\right)_{n\in\mathbb{N}}$
converges indeed to $\hat{\mathbf{p}}$, that is
\[
\lim_{-\infty}\mathbf{p}\in\left\{ \mathbf{v}_{s},\alpha_{i}\mathbf{e}_{i}\right\} .
\]

Since $p_{i}$ is nonincreasing in $\left(0,+\infty\right)$, it converges
as $\xi\to+\infty$. By classical elliptic regularity, 
\[
\lim_{+\infty}\left(-d_{i}p_{i}''-c_{0}p_{i}'\right)=0,
\]
whence either 
\[
\lim_{+\infty}p_{i}=0
\]
or $p_{j}$ converges as well, its limit being 
\[
\lim_{+\infty}p_{j}=\frac{1}{c_{i,j}}\left(r_{i}-c_{i,i}\lim_{+\infty}p_{i}\right).
\]
In the second case, using $-d_{j}p_{j}''-c_{0}p_{j}'\to0$, $p_{i}\left(0\right)=\rho$
and the monotonicity of $p_{i}$ in $\left(0,+\infty\right)$, we
find $\lim\limits _{+\infty}\mathbf{p}\in\left\{ \alpha_{j}\mathbf{e}_{j},\mathbf{0}\right\} $,
which contradicts directly $\lim\limits _{+\infty}p_{i}>0$. Hence
$p_{i}$ converges to $0$. 

Subsequently, since $p_{j}$ is positive, every local minimum of $p_{j}$
satisfies 
\[
r_{j}\leq c_{j,j}p_{j}\left(\xi\right)+c_{j,i}p_{i}\left(\xi\right),
\]
which proves that for all sequences $\left(\xi_{n}\right)_{n\in\mathbb{N}}$
such that $\xi_{n}\to+\infty$ and $p_{j}\left(\xi_{n}\right)$ is
a local minimum of $p_{j}$, $p_{j}\left(\xi_{n}\right)$ converges
to $\alpha_{j}$. But then, by $\mathscr{C}^{1}$ regularity, either
$p_{j}$ is monotonic in a neighborhood of $+\infty$ or there exists
a sequence $\left(\xi_{n}\right)_{n\in\mathbb{N}}$ such that $\xi_{n}\to+\infty$,
$p_{j}\left(\xi_{n}\right)$ is a local minimum of $p_{j}$ and $\left(p_{j}\left(\xi_{n}\right)\right)_{n\in\mathbb{N}}$
converges to $\liminf\limits _{+\infty}p_{j}$. It turns out that
in both cases $p_{j}$ converges, the possible limits being $0$ and
$\alpha_{j}$. 

Therefore $\mathbf{p}$ is a traveling wave achieving exactly one
of the following connections:
\begin{enumerate}
\item $\mathbf{0}$ to $\mathbf{v}_{s}$, 
\item $\alpha_{j}\mathbf{e}_{j}$ to $\mathbf{v}_{s}$, 
\item $\mathbf{0}$ to $\alpha_{i}\mathbf{e}_{i}$ with $\alpha_{i}\mathbf{e}_{i}\neq\mathbf{v}_{s}$, 
\item $\alpha_{j}\mathbf{e}_{j}$ to $\alpha_{i}\mathbf{e}_{i}$ with $\alpha_{i}\mathbf{e}_{i}\neq\mathbf{v}_{s}$. 
\end{enumerate}
It remains to show that the third case is semi-extinct and the fourth
case is impossible. We will actually prove both statements simultaneously
by proving that $\lim\limits _{-\infty}\mathbf{p}=\alpha_{i}\mathbf{e}_{i}\neq\mathbf{v}_{s}$
implies $p_{j}=0$ in $\mathbb{R}$.

Assume $\lim\limits _{-\infty}\mathbf{p}=\alpha_{i}\mathbf{e}_{i}$
and $\mathbf{v}_{s}=\mathbf{v}_{m}$. Assume also by contradiction
that $p_{j}$ is positive in $\mathbb{R}$. 

Multiplying the equation 
\[
-d_{j}p_{j}''-c_{0}p_{j}'=\left(r_{j}-c_{j,j}p_{j}-c_{j,i}p_{i}\right)p_{j},
\]
by the function
\[
\varphi:\xi\mapsto\text{e}^{\frac{c_{0}}{d_{j}}\xi},
\]
 we find
\[
-d_{j}\left(\varphi p_{j}'\right)'=\left(r_{j}-c_{j,j}p_{j}-c_{j,i}p_{i}\right)\varphi p_{j}.
\]
Recall that $\mathbf{v}_{s}=\mathbf{v}_{m}$ implies $\frac{r_{i}}{r_{j}}<\frac{c_{i,i}}{c_{j,i}}$,
that is $r_{j}-c_{j,i}\alpha_{i}>0$. Therefore the quantity
\[
\overline{\xi}=\sup\left\{ \xi\in\mathbb{R}\ |\ \forall\zeta\in\left(-\infty,\xi\right)\quad r_{j}-c_{j,j}p_{j}\left(\zeta\right)-c_{j,i}p_{i}\left(\zeta\right)>0\right\} 
\]
 is well-defined in $\mathbb{R}\cup\left\{ +\infty\right\} $. In
$\left(-\infty,\overline{\xi}\right)$, $\varphi p_{j}'$ is decreasing.
Since on one hand $\lim\limits _{-\infty}\varphi=0$ and on the other
hand $\lim\limits _{-\infty}p_{j}'=0$ by classical elliptic regularity,
the limit of $\varphi p_{j}'$ itself is $0$. Consequently, $\varphi p_{j}'$
is negative in $\left(-\infty,\overline{\xi}\right)$. It follows
that $p_{j}$ itself is decreasing in $\left(-\infty,\overline{\xi}\right)$.
But then $\lim\limits _{-\infty}p_{j}=0$ implies that $p_{j}$ is
negative in $\left(-\infty,\overline{\xi}\right)$, which obviously
contradicts the positivity of $p_{j}$. This ends the proof.
\end{proof}

\section{Discussion}

\subsection{Why is it likely hopeless to search for a general result on the behavior
at the back of the front?\label{subsec:Delicate_back}}

First of all, the linearization of $\left(S_{KPP}\right)$ at $\mathbf{0}$
being cooperative, it is natural to wonder whether the dynamics of
$\left(E_{KPP}\right)$ near some constant positive solution $\mathbf{u}$
of $\left(S_{KPP}\right)$ might be purely competitive or cooperative.
In general, neither is the case. The linearized reaction term at any
constant solution $\mathbf{u}$ of $\left(S_{KPP}\right)$ is

\[
\mathbf{L}_{\mathbf{u}}=\mathbf{L}-\text{diag}\left(\mathbf{c}\left(\mathbf{u}\right)\right)-\left(\mathbf{u}\mathbf{1}_{1,N}\right)\circ D\mathbf{c}\left(\mathbf{u}\right).
\]
 In the Lotka\textendash Volterra case where there exists $\mathbf{C}\gg\mathbf{0}$
such that $\mathbf{c}\left(\mathbf{v}\right)=\mathbf{C}\mathbf{v}$,
it reads
\[
\mathbf{L}_{\mathbf{u}}=\mathbf{L}-\text{diag}\left(\mathbf{C}\mathbf{u}\right)-\left(\mathbf{u}\mathbf{1}_{1,N}\right)\circ\mathbf{C}.
\]
 On one hand, it is clear that if there exists $\left(i,j\right)\in\left[N\right]^{2}$
such that $l_{i,j}=0$, then $l_{\mathbf{u},i,j}<0$. On the other
hand, assuming that there exists $i\in\left[N\right]$ such that $l_{i,i}\leq0$,
we find
\[
-l_{i,i}u_{i}+u_{i}c_{i,i}u_{i}>0.
\]
 Since $\mathbf{L}\mathbf{u}-\left(\mathbf{C}\mathbf{u}\right)\circ\mathbf{u}=\mathbf{0}$,
it follows
\[
\sum_{j\in\left[N\right]\backslash\left\{ i\right\} }\left(l_{i,j}u_{j}-u_{i}c_{i,j}u_{j}\right)>0,
\]
 whence there exists $j\in\left[N\right]\backslash\left\{ i\right\} $
such that $l_{i,j}u_{j}-u_{i}c_{i,j}u_{j}>0$, that is such that
\[
l_{\mathbf{u},i,j}=l_{i,j}-u_{i}c_{i,j}>0.
\]
 Hence the competitive dynamics and the cooperative dynamics are indeed
intertwined near $\mathbf{u}$.

Next, in view of the literature on non-cooperative KPP systems, it
could be tempting to conjecture the uniqueness and the local stability
of the constant positive solution of $\left(S_{KPP}\right)$ (see
for instance Dockery\textendash Hutson\textendash Mischaikow\textendash Pernarowski
\cite{Dockery_1998} or Morris\textendash Börger\textendash Crooks
\cite{Morris_Borger_Crooks}). However, if $\mathbf{c}$ is linear
as before and if
\[
\left(N,\mathbf{L},\mathbf{C}\right)=\left(2,\mathbf{I}_{2}+\frac{1}{5}\left(\begin{matrix}-1 & 1\\
1 & -1
\end{matrix}\right),\frac{1}{10}\left(\begin{matrix}1 & 9\\
9 & 1
\end{matrix}\right)\right),
\]
then this property fails. Indeed, straightforward computations show
that the set of constant positive solutions of $\left(S_{KPP}\right)$
is
\[
\left\{ \left(\begin{matrix}3-\sqrt{\frac{15}{2}}\\
3+\sqrt{\frac{15}{2}}
\end{matrix}\right),\mathbf{1}_{2,1},\left(\begin{matrix}3+\sqrt{\frac{15}{2}}\\
3-\sqrt{\frac{15}{2}}
\end{matrix}\right)\right\} .
\]
 From the associated linearizations, it is easily found that, with
respect to $\left(E_{KPP}^{0}\right)$, the symmetric solution $\mathbf{1}_{2,1}$
is a saddle point whereas the other two solutions are stable nodes. 

Last, we also point out that if $\mathbf{d}=\mathbf{1}_{2,1}$ then
the preceding counter-example admits a family of traveling waves connecting
$\mathbf{0}$ to the saddle point $\mathbf{1}_{2,1}$. Indeed, looking
for profiles $\mathbf{p}$ of the form $\xi\mapsto p\left(\xi\right)\mathbf{1}_{2,1}$,
$\left(TW\left[c\right]\right)$ reduces to
\[
-p''-cp'=p-p^{2},
\]
which, by virtue of well-known results on the scalar KPP equation,
admits solutions connecting $0$ to $1$ if and only if $c\geq2$.
Hence we cannot hope to prove that all traveling waves connect $\mathbf{0}$
to a stable steady state. 

\subsection{What about the general separated competition case, with $\mathbf{d}$
and $\mathbf{a}$ possibly different from $\mathbf{1}_{N,1}$?}

The general case might be more subtle than expected, even regarding
the ODE system $\left(E_{KPP}^{0}\right)$: although the linearization
at $\mathbf{v}^{\star}$,
\[
\mathbf{L}_{\mathbf{v}^{\star}}=\mathbf{L}-\lambda_{\mathbf{a}}\mathbf{A}-\mathbf{A}\mathbf{v}^{\star}\left(\nabla b\left(\mathbf{v}^{\star}\right)^{T}\right),
\]
seems to be adequately described as a matrix of the form $-\mathbf{P}-\mathbf{Q}$
with $\mathbf{P}=\lambda_{\mathbf{a}}\mathbf{A}-\mathbf{L}$ a singular
M-matrix and $\mathbf{Q}=\mathbf{A}\mathbf{v}^{\star}\left(\nabla b\left(\mathbf{v}^{\star}\right)^{T}\right)$
a positive rank-one matrix, a recent paper by Bierkens and Ran \cite{Bierkens_Ran}
highlights thanks to a counter-example that such matrices can have
eigenvalues with positive real part (and there is in addition a counter-example
with irreducible $-\mathbf{P}$, so that irreducibility is not a sufficient
condition to ensure all eigenvalues are negative). Therefore it is
unclear whether $\mathbf{v}^{\star}$ is always locally asymptotically
stable with respect to $\left(E_{KPP}^{0}\right)$. Actually, the
main purpose of the study of Bierkens and Ran is to establish several
conditions sufficient to guarantee that all eigenvalues have a negative
real part (conditions among which we find $N=2$ and, of course, $\mathbf{a}=\mathbf{1}_{N,1}$).

In the case $N=2$, classical calculations show that the system $\left(E_{KPP}\right)$
is not subjected to Turing instabilities with respect to periodic
perturbations. Therefore it might be fruitful to investigate more
thoroughly the two-component system. Nevertheless, to this day we
do not have any further result.

\subsection{Where does \conjref{(H_7)} come from?}

Let us bring forth some insight into the limiting problem. What are
the spreading properties of $\left(E_{KPP}\right)_{0}$ with respect
to front-like initial data? What are the propagating solutions of
$\left(E_{KPP}\right)_{0}$ invading the null state?

Concerning the bistable case, we have at our disposal a recent result
by Carrère \cite{Carrere_2017} which can be summed up as follows.
Consider the Cauchy problem where $\left(-\infty,0\right)$ is initially
inhabited mostly but not only (in a sense made rigorous by Carrère)
by $u_{1}$ and $\left(0,+\infty\right)$ is completely uninhabited.
Let $c_{\alpha_{1}\mathbf{e}_{1}\to\alpha_{2}\mathbf{e}_{2}}$ be
the speed of the bistable front equal to $\alpha_{1}\mathbf{e}_{1}$
at $-\infty$ and to $\alpha_{2}\mathbf{e}_{2}$ at $+\infty$, as
given by Kan-On \cite{Kan_on_1995} and Gardner \cite{Gardner_1982}.
Recall that the following bounds hold true:
\[
-2\sqrt{d_{2}r_{2}}<c_{\alpha_{1}\mathbf{e}_{1}\to\alpha_{2}\mathbf{e}_{2}}<2\sqrt{d_{1}r_{1}}.
\]

Carrère\textquoteright s theorem is then:
\begin{enumerate}
\item if $2\sqrt{d_{1}r_{1}}>2\sqrt{d_{2}r_{2}}$ and $c_{\alpha_{1}\mathbf{e}_{1}\to\alpha_{2}\mathbf{e}_{2}}>0$,
then asymptotically in time, $u_{2}$ is extinct and $u_{1}$ spreads
at speed $2\sqrt{d_{1}r_{1}}$;
\item if $2\sqrt{d_{1}r_{1}}<2\sqrt{d_{2}r_{2}}$ and $c_{\alpha_{1}\mathbf{e}_{1}\to\alpha_{2}\mathbf{e}_{2}}>0$,
then asymptotically in time, $u_{2}$ spreads on the right at speed
$2\sqrt{d_{2}r_{2}}$ but is then replaced by $u_{1}$ at speed $c_{\alpha_{1}\mathbf{e}_{1}\to\alpha_{2}\mathbf{e}_{2}}$;
\item if $2\sqrt{d_{1}r_{1}}<2\sqrt{d_{2}r_{2}}$ and $c_{\alpha_{1}\mathbf{e}_{1}\to\alpha_{2}\mathbf{e}_{2}}<0$,
then asymptotically in time, $u_{2}$ chases $u_{1}$ on the left
at speed $c_{\alpha_{1}\mathbf{e}_{1}\to\alpha_{2}\mathbf{e}_{2}}$
and spreads on the right at speed $2\sqrt{d_{2}r_{2}}$.
\end{enumerate}
This result was long-awaited but, as far as we know, Carrère\textquoteright s
proof is the first one. 

Up to the sign of $c_{\alpha_{1}\mathbf{e}_{1}\to\alpha_{2}\mathbf{e}_{2}}$,
the second and the third cases above are identical. Recall that the
sign of $c_{\alpha_{1}\mathbf{e}_{1}\to\alpha_{2}\mathbf{e}_{2}}$
is in general a tough problem, although recently some particular cases
have been successfully solved (strong competition in Girardin\textendash Nadin
\cite{Girardin_Nadin_2015}, special choices of parameter values in
Guo\textendash Lin \cite{Guo_Lin_2013}, perturbation of the standing
wave in Risler \cite{Risler_2017}). 

A natural conjecture in view of Carrère\textquoteright s result is
the long-time convergence, in the first case, to a traveling wave
connecting $\mathbf{0}$ to $\alpha_{1}\mathbf{e}_{1}$ at speed $2\sqrt{d_{1}r_{1}}$
and with a semi-extinct profile $\mathbf{p}=p\mathbf{e}_{1}$. However,
in the second and third cases, a more complex limit seems to arise. 

The entire solutions connecting three or more stationary states with
decreasingly ordered speeds were first described in the scalar setting
by Fife and McLeod \cite{Fife_McLeod_19} and are referred to as \textit{propagating
terraces}, or simply \textit{terraces,} since the work of Ducrot,
Giletti and Matano \cite{Ducrot_Giletti_Matano}. A terrace with $n-1$
intermediate states is defined as a finite family of traveling waves
$\left(\left(\mathbf{p}_{i},c_{i}\right)\right)_{i\in\left[n\right]}$
such that $\mathbf{p}_{i}\left(-\infty\right)=\mathbf{p}_{i+1}\left(+\infty\right)$
for all $i\in\left[n-1\right]$ and such that $\left(c_{i}\right)_{i\in\left[n\right]}$
is decreasing. Provided the uniqueness (up to translation of the profile)
of the traveling wave connecting $\mathbf{v}_{i}=\mathbf{p}_{i}\left(+\infty\right)$
to $\mathbf{v}_{i+1}=\mathbf{p}_{i}\left(-\infty\right)$ at speed
$c_{i}$, the terrace is equivalently defined as the family $\left(\left(\mathbf{v}_{i},c_{i}\right)_{i\in\left[n\right]},\mathbf{v}_{n+1}\right)$.
However, in general, this family only defines a family of  terraces
that will be denoted hereafter $\mathscr{T}\left(\left(\mathbf{v}_{i},c_{i}\right)_{i\in\left[n\right]},\mathbf{v}_{n+1}\right)$. 

In terms of this definition, the expected limits in the second and
third cases studied by Carrère are terraces belonging to
\[
\mathscr{T}\left(\mathbf{0},2\sqrt{d_{2}r_{2}},\alpha_{2}\mathbf{e}_{2},c_{\alpha_{1}\mathbf{e}_{1}\to\alpha_{2}\mathbf{e}_{2}},\alpha_{1}\mathbf{e}_{1}\right)
\]
with a semi-extinct first profile.

The obvious conjecture is then that all propagating solutions invading
$\mathbf{0}$ apart from semi-extinct monostable traveling waves belong
to
\[
\bigcup_{i\in\left\{ 1,2\right\} }\bigcup_{c\geq2\sqrt{d_{i}r_{i}}}\mathscr{T}\left(\mathbf{0},c,\alpha_{i}\mathbf{e}_{i},c_{\alpha_{3-i}\mathbf{e}_{3-i}\to\alpha_{i}\mathbf{e}_{i}},\alpha_{3-i}\mathbf{e}_{3-i}\right)
\]
and have a semi-extinct first profile.

The bistable case being more or less understood, we now turn our attention
to the monostable case. Let $\mathbf{v}_{s}\in\left\{ \alpha_{1}\mathbf{e}_{1},\alpha_{2}\mathbf{e}_{2},\mathbf{v}_{m}\right\} $
be the unique stable state, $\mathbf{v}_{u}\in\left\{ \mathbf{0},\alpha_{1}\mathbf{e}_{1},\alpha_{2}\mathbf{e}_{2}\right\} $
be an unstable state and consider the Cauchy problem with compactly
supported perturbations of $\mathbf{v}_{u}$ as initial data. Although
the case $\mathbf{v}_{u}=\alpha_{i}\mathbf{e}_{i}$ with 
\[
i\in\mathsf{I}_{u}=\left\{ j\in\left\{ 1,2\right\} \ |\ \alpha_{j}\mathbf{e}_{j}\neq\mathbf{v}_{s}\right\} .
\]
 is well understood (Lewis, Li and Weinberger proved the uniqueness
of the spreading speed $c_{\mathbf{v}_{s}\to\alpha_{i}\mathbf{e}_{i}}^{\star}$
\cite{Lewis_Weinberg,Weinberger_Lew}), the case $\mathbf{v}_{u}=\mathbf{0}$
is much more intricate: in particular, for $\mathbf{v}_{s}=\mathbf{v}_{m}$,
a recent theorem analogous to that of Carrère and due to Lin and Li
\cite{Lin_Li_2012} shows that if $d_{2}r_{2}>d_{1}r_{1}$, then $u_{2}$
will invade first at speed $2\sqrt{d_{2}r_{2}}$ and then be chased
by $u$. Although straightforward comparisons show that the replacement
occurs somewhere in $\left[c_{\mathbf{v}_{m}\to\alpha_{2}\mathbf{e}_{2}}^{\star}t,2\sqrt{d_{1}r_{1}}t\right]$,
the exact speed of $u$ is a delicate question, unsettled in the paper
of Lin and Li.

Tang and Fife \cite{Tang_Fife_1980} established by phase-plane analysis
that traveling waves connecting $\mathbf{0}$ to $\mathbf{v}_{s}$
exist if and only if the speed $c$ satisfies $c\geq c_{\mathbf{v}_{s}\to\mathbf{0}}^{TW}$,
where
\[
c_{\mathbf{v}_{s}\to\mathbf{0}}^{TW}=2\sqrt{\max\limits _{i\in\left\{ 1,2\right\} }d_{i}r_{i}}
\]
is linearly determinate. 

Terraces connecting $\mathbf{0}$ to $\mathbf{v}_{s}$ through an
intermediate unstable state $\alpha_{i}\mathbf{e}_{i}$ with $i\in\mathsf{I}_{u}$
should involve semi-extinct monostable traveling waves connecting
$\mathbf{0}$ to $\alpha_{i}\mathbf{e}_{i}$ and monostable traveling
waves connecting $\alpha_{i}\mathbf{e}_{i}$ to $\mathbf{v}_{s}$.
Again, there exists a minimal wave speed $c_{\mathbf{v}_{s}\to\alpha_{i}\mathbf{e}_{i}}^{TW}$,
as proved for instance by Kan\textendash On \cite{Kan_On_1997} or
Lewis\textendash Li\textendash Weinberger \cite{Li_Weinberger_}.
Recall that $c_{\mathbf{v}_{s}\to\alpha_{i}\mathbf{e}_{i}}^{TW}$
is not linearly determinate in general, however it is bounded from
below by the linear speed:
\[
c_{\mathbf{v}_{s}\to\alpha_{i}\mathbf{e}_{i}}^{TW}\geq2\sqrt{d_{3-i}r_{3-i}\left(1-\frac{c_{3-i,i}r_{i}}{c_{i,i}r_{3-i}}\right)}.
\]
In any case, it is natural to expect that for all $i\in\mathsf{I}_{u}$,
terraces belonging to $\mathscr{T}\left(\mathbf{0},c,\alpha_{i}\mathbf{e}_{i},c',\mathbf{v}_{s}\right)$
with a semi-extinct first profile exist if and only if
\[
\left\{ \begin{matrix}c_{\mathbf{v}_{s}\to\alpha_{i}\mathbf{e}_{i}}\leq c'\\
2\sqrt{d_{i}r_{i}}\leq c\\
c'<c.
\end{matrix}\right.
\]

Consequently, the conjecture is that all propagating solutions invading
$\mathbf{0}$ apart from (possibly semi-extinct) monostable traveling
waves belong to
\[
\bigcup_{i\in\mathsf{I}_{u}}\bigcup_{c\geq2\sqrt{d_{i}r_{i}}}\bigcup_{c'\geq c_{\mathbf{v}_{s}\to\alpha_{i}\mathbf{e}_{i}}}\mathscr{T}\left(\mathbf{0},c,\alpha_{i}\mathbf{e}_{i},c',\mathbf{v}_{s}\right)
\]
and have a semi-extinct first profile.

Having these conjectures in mind, we introduce small mutations and
wonder how they affect the outcome. An heuristic answer due to Elliott
and Cornell \cite{Elliott_Cornel} suggests that \textquotedblleft the
only role of mutations is to ensure that both morphs travel at the
same speed\textquotedblright . Therefore, there might exist functions
$\mathbf{u}^{0}:\mathbb{R}\to\mathsf{K}$ such that the solutions
$\left(\mathbf{u}_{\eta}\right)_{\eta\geq0}$ of the Cauchy problem
associated with $\left(E_{KPP}\right)_{\eta}$ with initial data $\mathbf{u}^{0}$
admit as long-time asymptotic a traveling wave if $\eta>0$ and a
terrace of $\mathscr{T}\left(\mathbf{0},c,\alpha_{i}\mathbf{e}_{i},c',\mathbf{v}\right)$
if $\eta=0$. We refer hereafter to such traveling waves as quasi-$\mathscr{T}\left(\mathbf{0},c,\alpha_{i}\mathbf{e}_{i},c',\mathbf{v}\right)$
traveling waves.

In order to study these special traveling waves, we resort to numerical
simulations. We find two completely different behaviors.
\begin{itemize}
\item In the bistable case (\figref{Numerics_bistable_spreading}), quasi-$\mathscr{T}\left(\mathbf{0},2\sqrt{d_{i}r_{i}},\alpha_{i}\mathbf{e}_{i},c_{\alpha_{j}\mathbf{e}_{j}\to\alpha_{i}\mathbf{e}_{i}},\alpha_{j}\mathbf{e}_{j}\right)$
traveling waves (with $i\in\left\{ 1,2\right\} $ and $j=3-i$) converge
as $\eta\to0$ to a semi-extinct traveling wave connecting $\mathbf{0}$
to $\alpha_{j}\mathbf{e}_{j}$ if $c_{\alpha_{j}\mathbf{e}_{j}\to\alpha_{i}\mathbf{e}_{i}}>0$
and to $\alpha_{i}\mathbf{e}_{i}$ if $c_{\alpha_{j}\mathbf{e}_{j}\to\alpha_{i}\mathbf{e}_{i}}<0$.
\end{itemize}

\begin{figure}
\subfloat[$t=0$]{\resizebox{.3\hsize}{!}{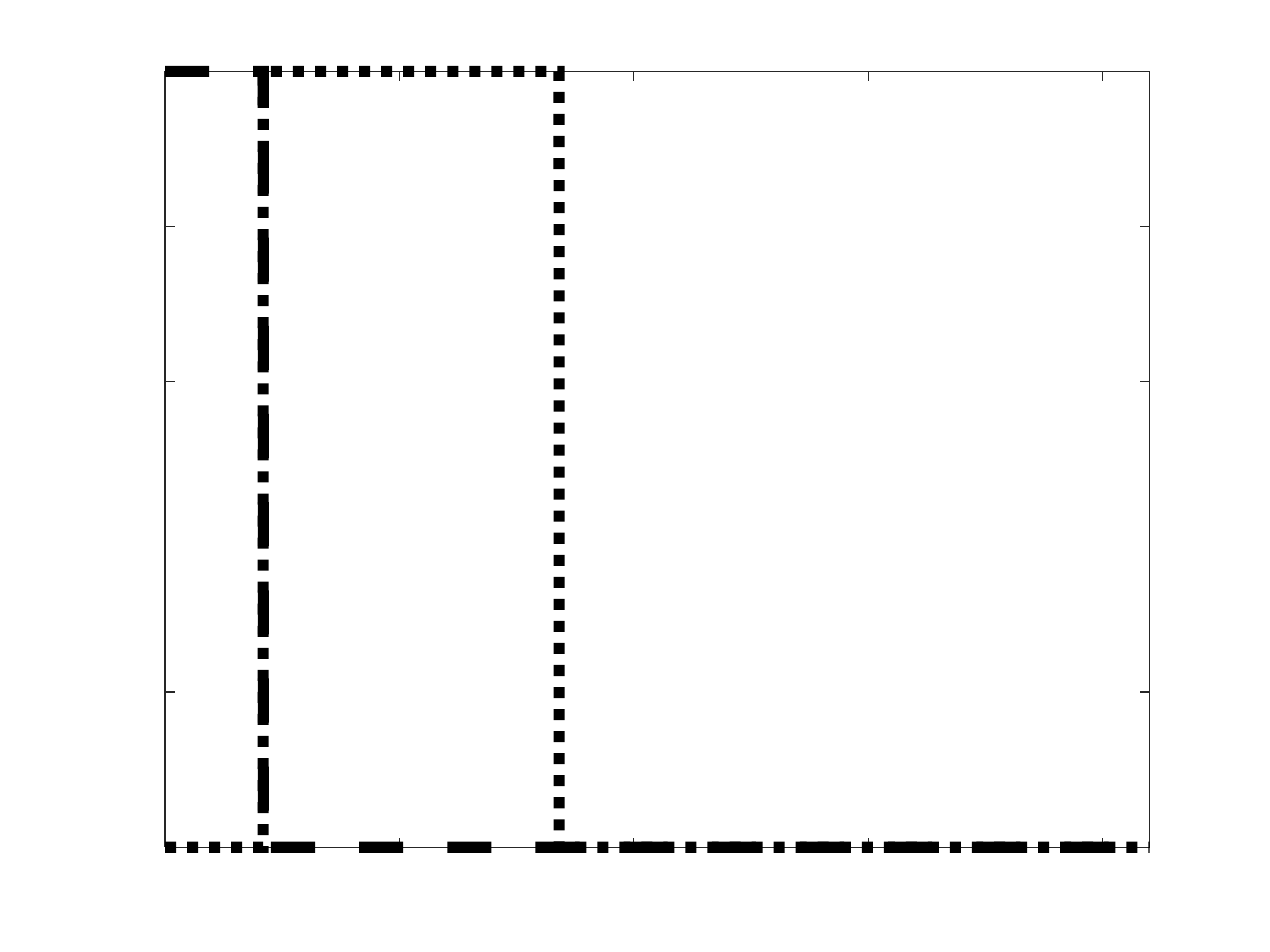}}\subfloat[$t=20$]{\resizebox{.3\hsize}{!}{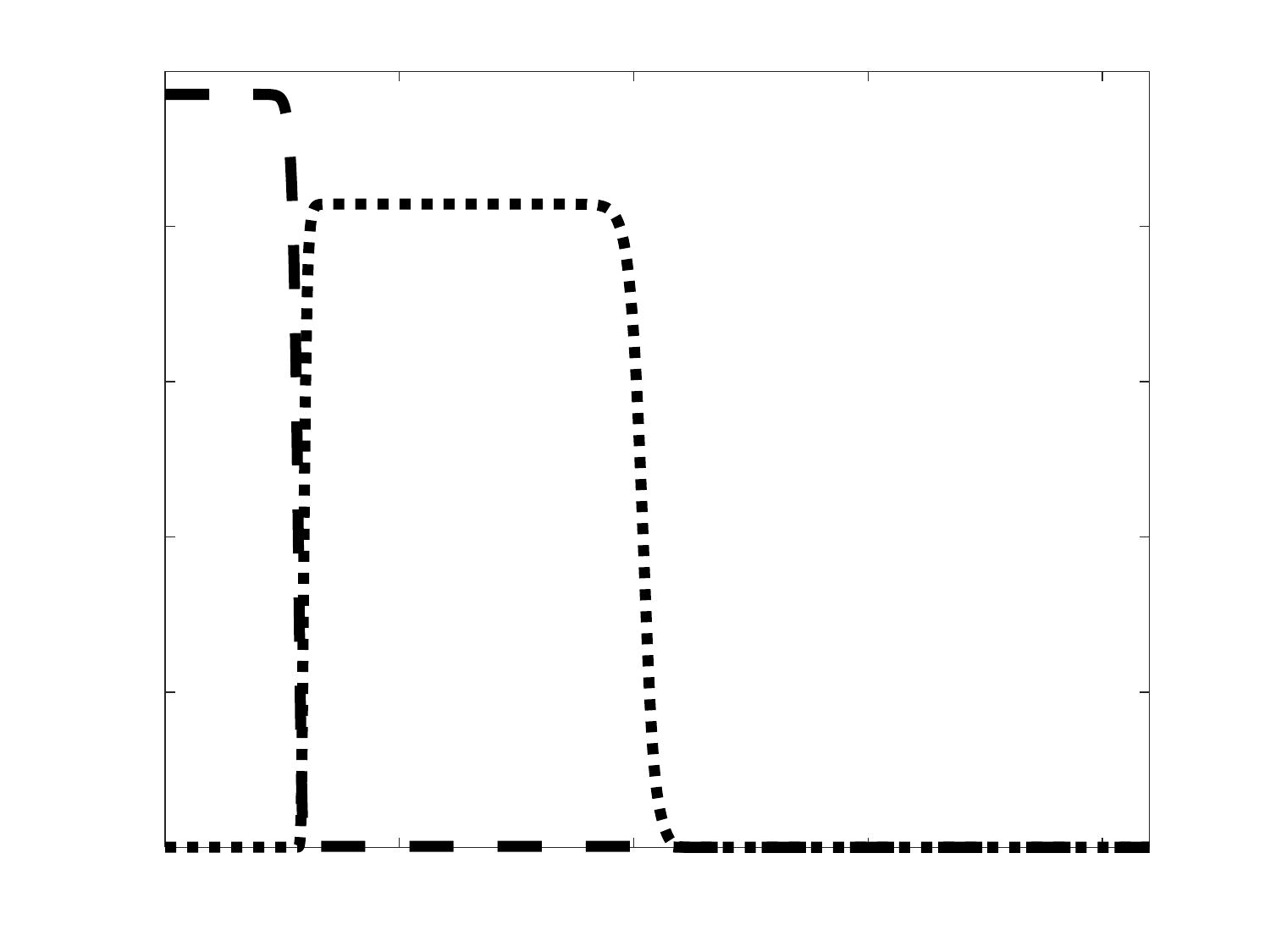}}\subfloat[$t=50$]{\resizebox{.3\hsize}{!}{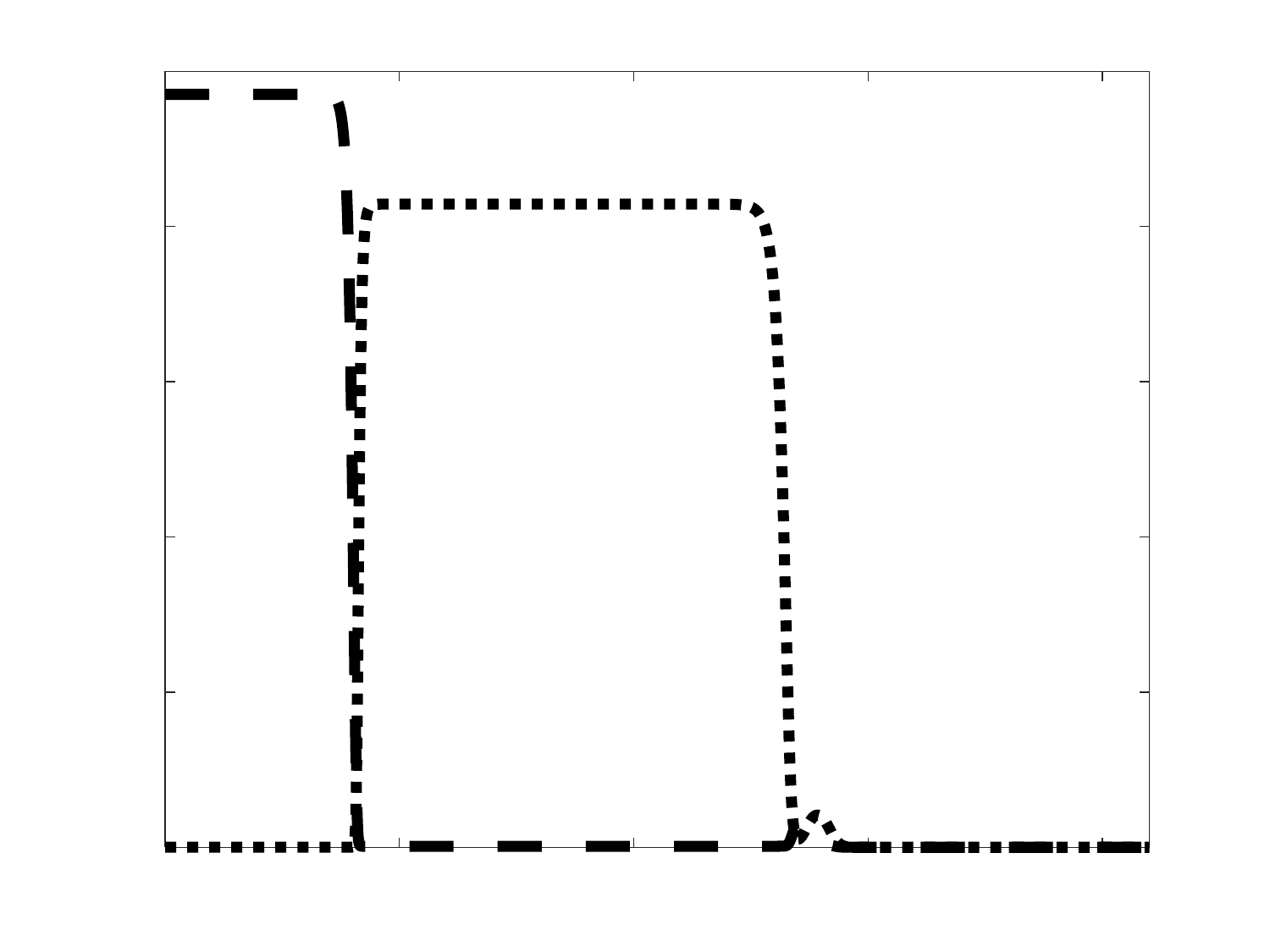}}\\
\subfloat[$t=55$]{\resizebox{.3\hsize}{!}{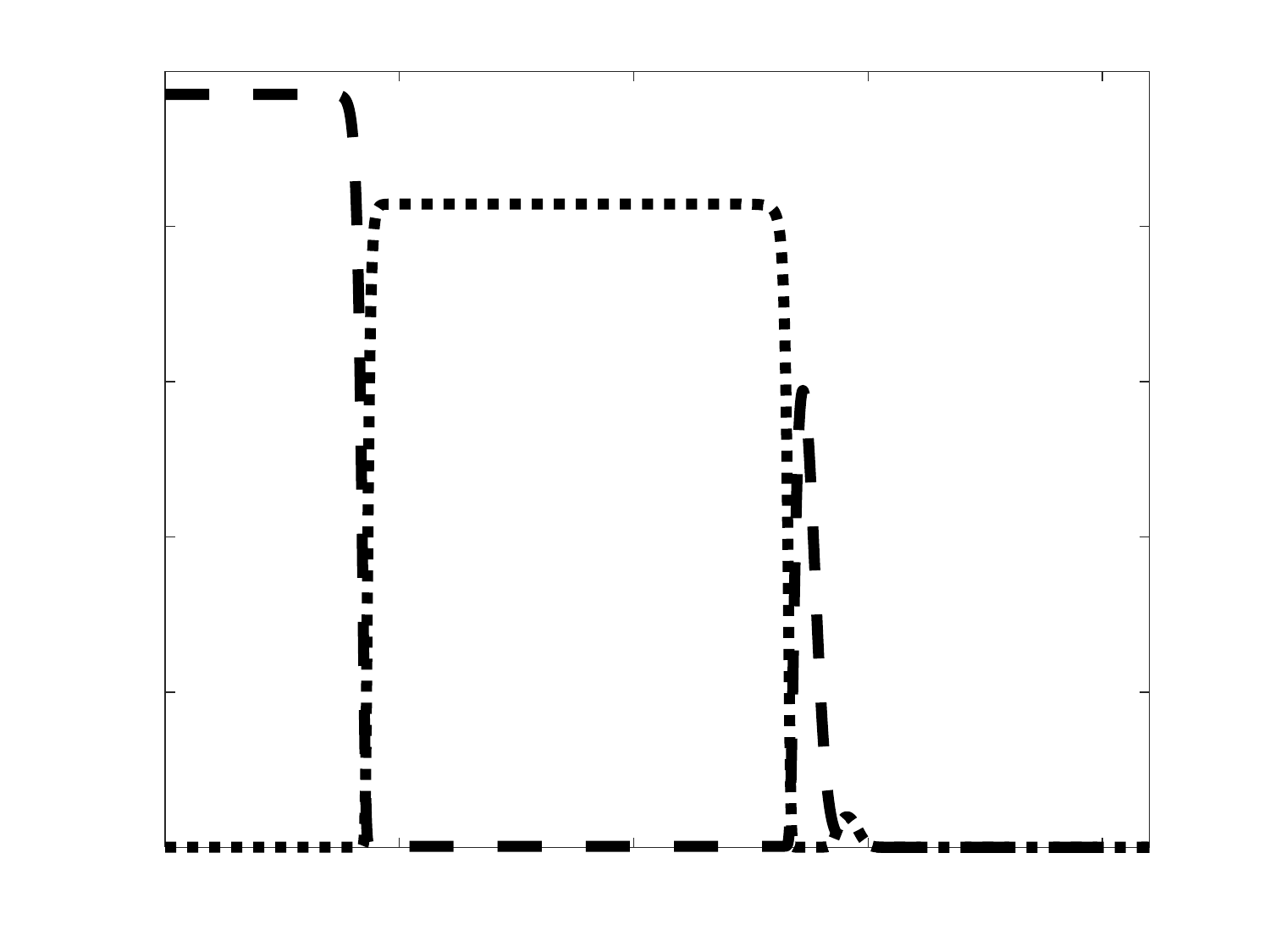}}\subfloat[$t=65$]{\resizebox{.3\hsize}{!}{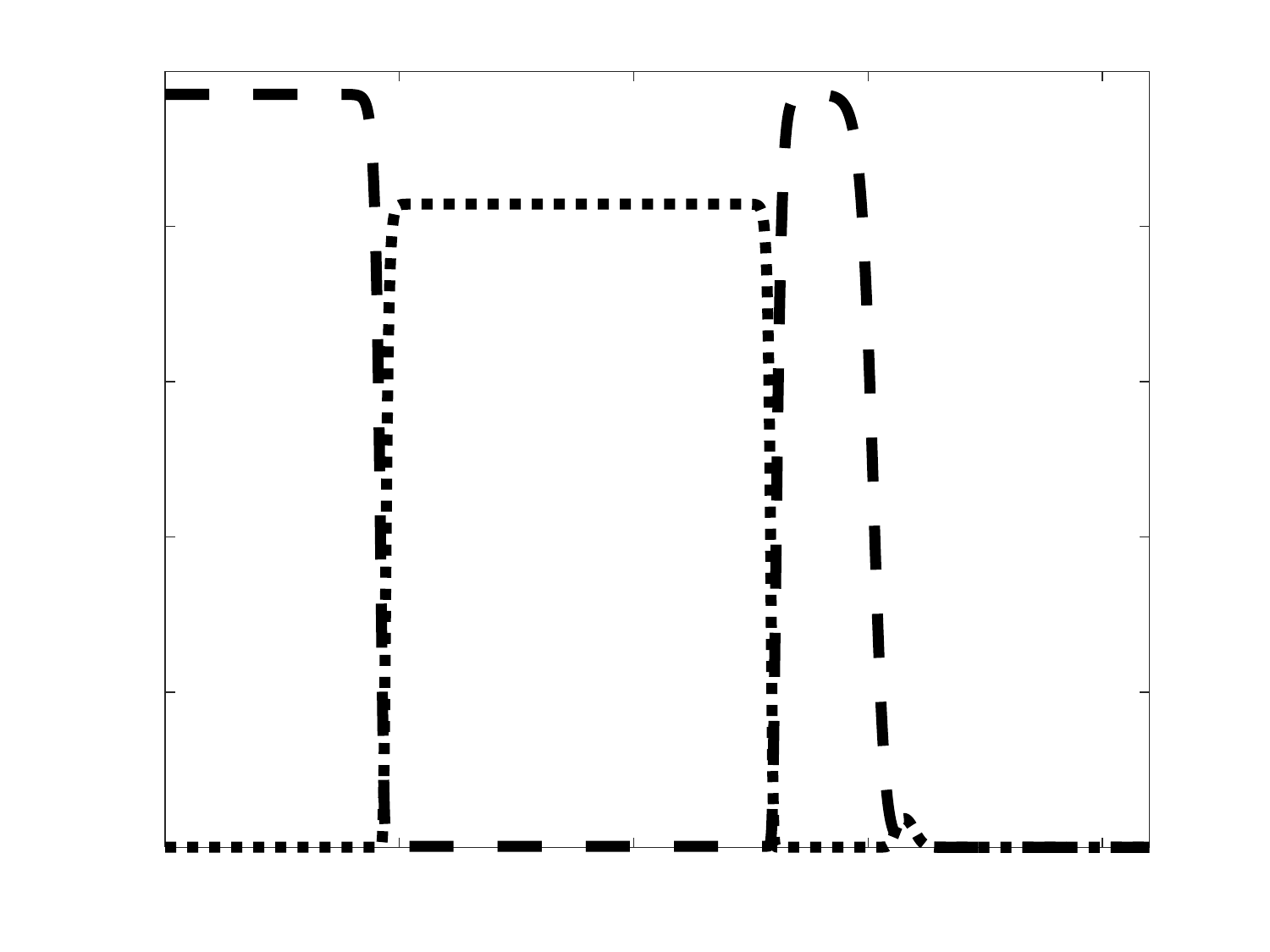}}\subfloat[$t=100$]{\resizebox{.3\hsize}{!}{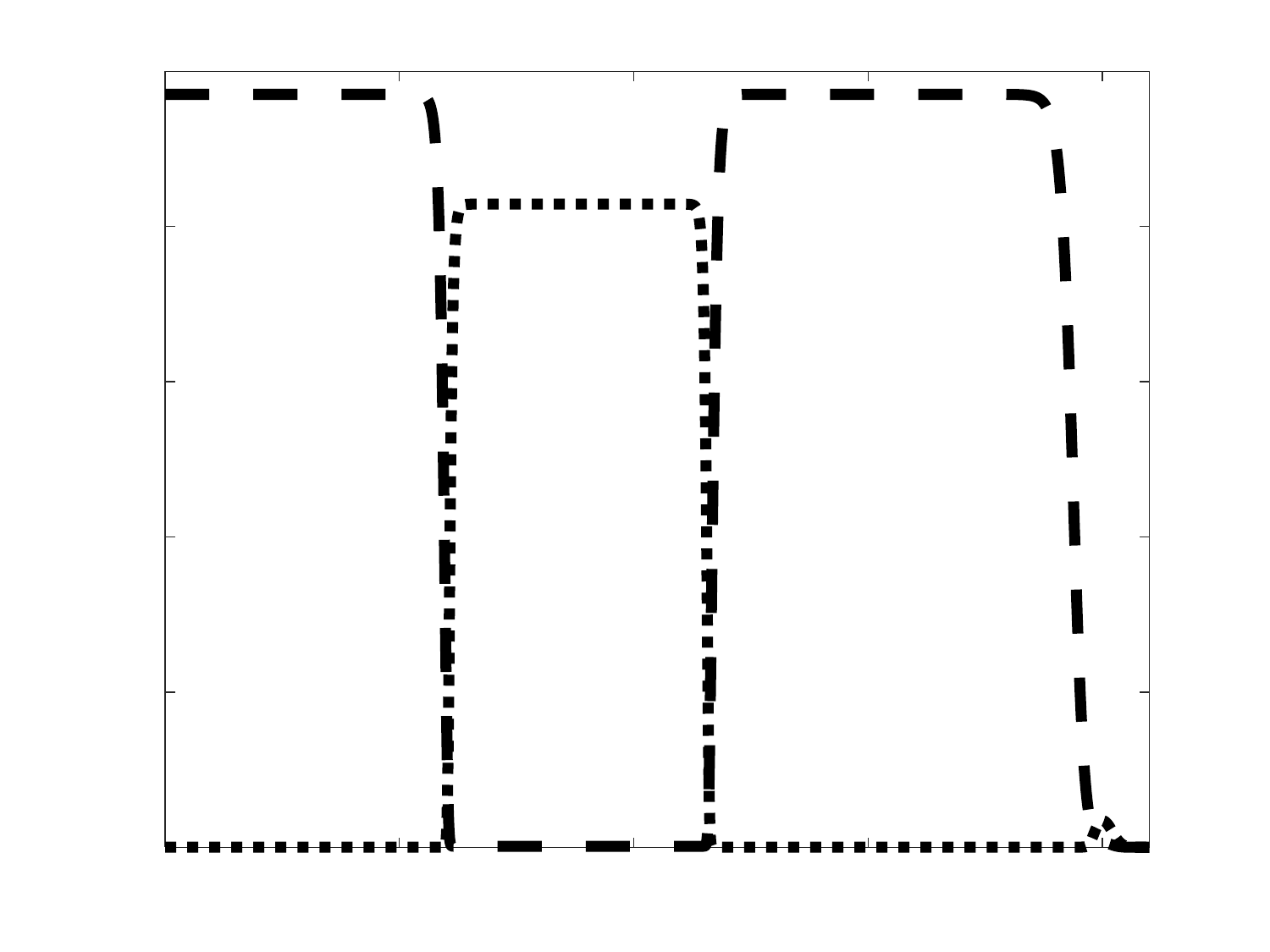}}

\caption{{\small{}\label{fig:Numerics_bistable_spreading} }Numerical simulation
of the bistable case with initial data corresponding to a competition\textendash diffusion
terrace ($u_{1}$ dashed line, $u_{2}$ dotted line, $x$ as horizontal
axis). \protect \\
Parameter values: $\mathbf{d}=\left(1,1.5125\right)^{T}$, $\mathbf{r}=\mathbf{1}_{2,1}$,
$\mathbf{m}=\mathbf{1}_{2,1}$, $\eta=0.025$, $c_{1,1}=c_{2,2}=1$,
$c_{1,2}=20$, $c_{2,1}=110$, so that \cite{Girardin_Nadin_2015}
$c_{\alpha_{1}\mathbf{e}_{1}\to\alpha_{2}\mathbf{e}_{2}}>0$. \protect \\
The traveling wave which is on the right at $t=100$, driven by a
very small bump of $u_{2}$ but dominated at the back by $u_{1}$,
is the long-time asymptotic. Indeed the $u_{2}$-dominated area in
the middle shrinks from both sides at a speed close to $\left|c_{\alpha_{1}\mathbf{e}_{1}\to\alpha_{2}\mathbf{e}_{2}}\right|$
and will ultimately disappear.}
\end{figure}

\begin{itemize}
\item In the monostable case (\figref{Numerics_monostable_spreading}),
for all $i\in\mathsf{I}_{u}$, quasi-$\mathscr{T}\left(\mathbf{0},2\sqrt{d_{i}r_{i}},\alpha_{i}\mathbf{e}_{i},c',\mathbf{v}_{s}\right)$
traveling waves connect $\mathbf{0}$ to $\mathbf{v}_{s}$ through
an intermediate bump of $u_{i}$. As $\eta\to0$, the amplitude of
this bump tends to $\alpha_{i}$ while its length tends slowly to
$+\infty$ (seemingly like $\ln\eta$). Therefore, depending on the
normalization, the limit of the profiles as $\eta\to0$ is either
a semi-extinct connection between $\mathbf{0}$ and $\alpha_{i}\mathbf{e}_{i}$
or a monostable connection between $\alpha_{i}\mathbf{e}_{i}$ and
$\mathbf{v}_{s}$.
\begin{figure}
\subfloat[$t=0$]{\resizebox{.3\hsize}{!}{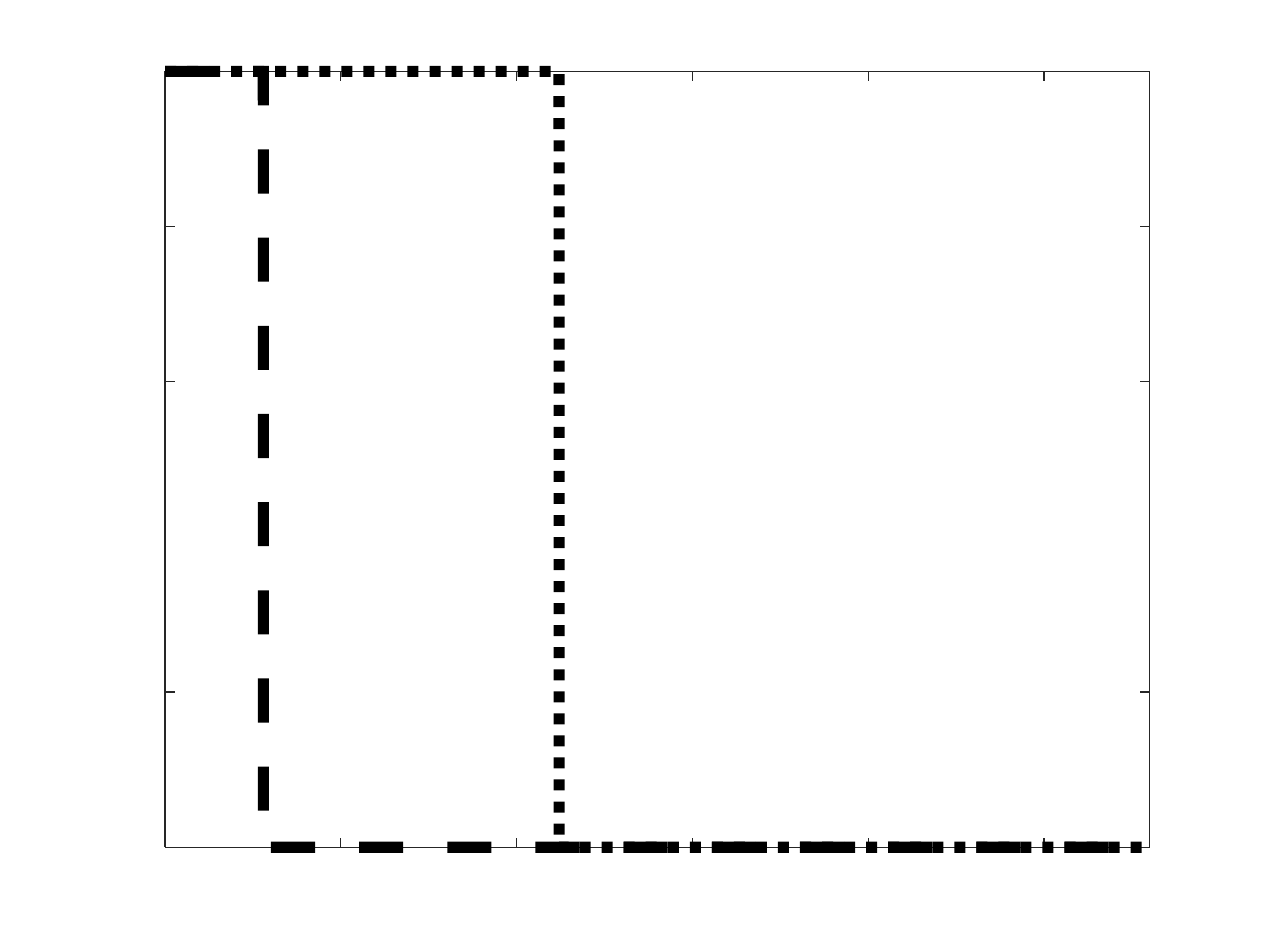}}\subfloat[$t=5$]{\resizebox{.3\hsize}{!}{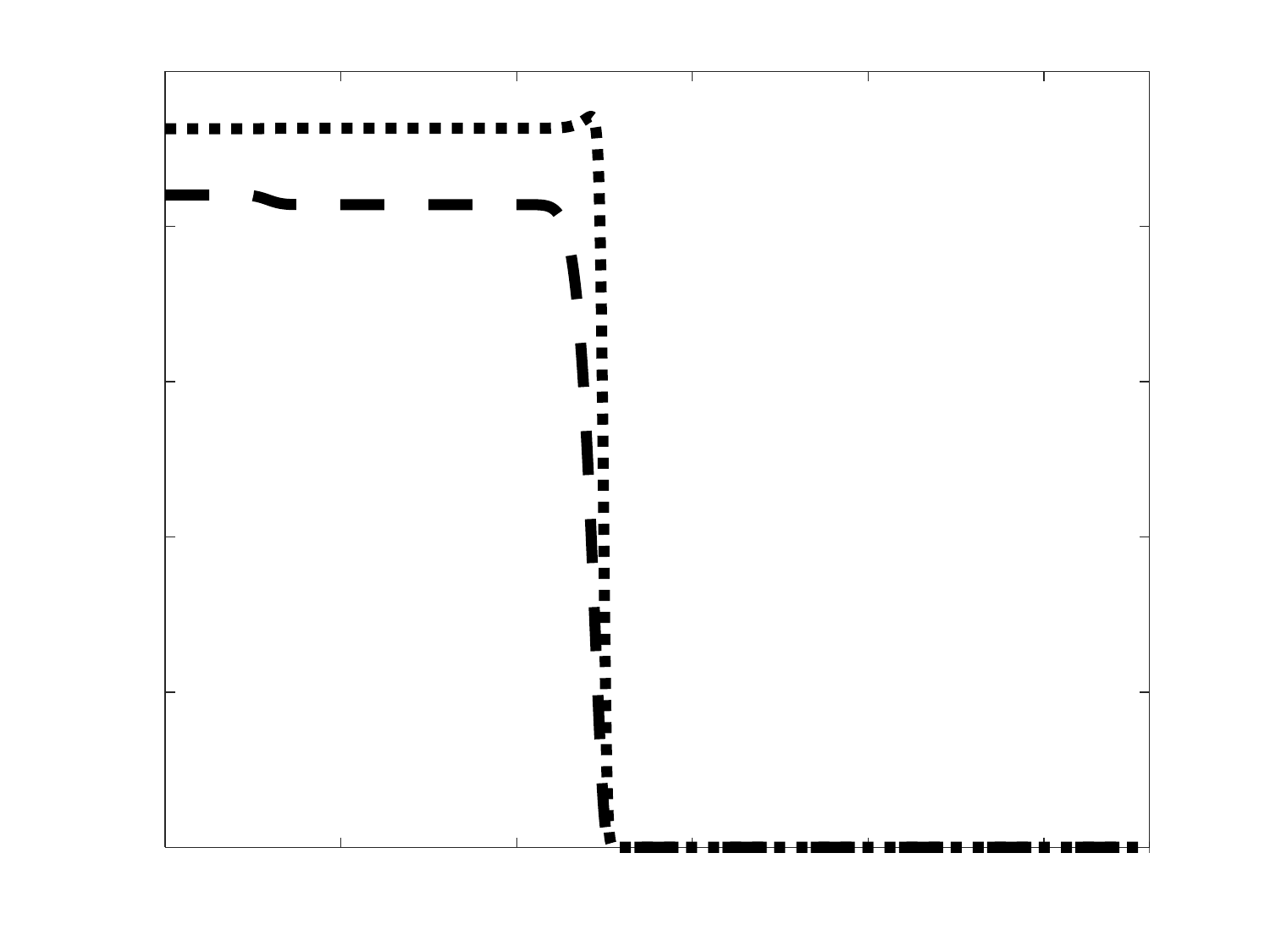}}\subfloat[$t=40$]{\resizebox{.3\hsize}{!}{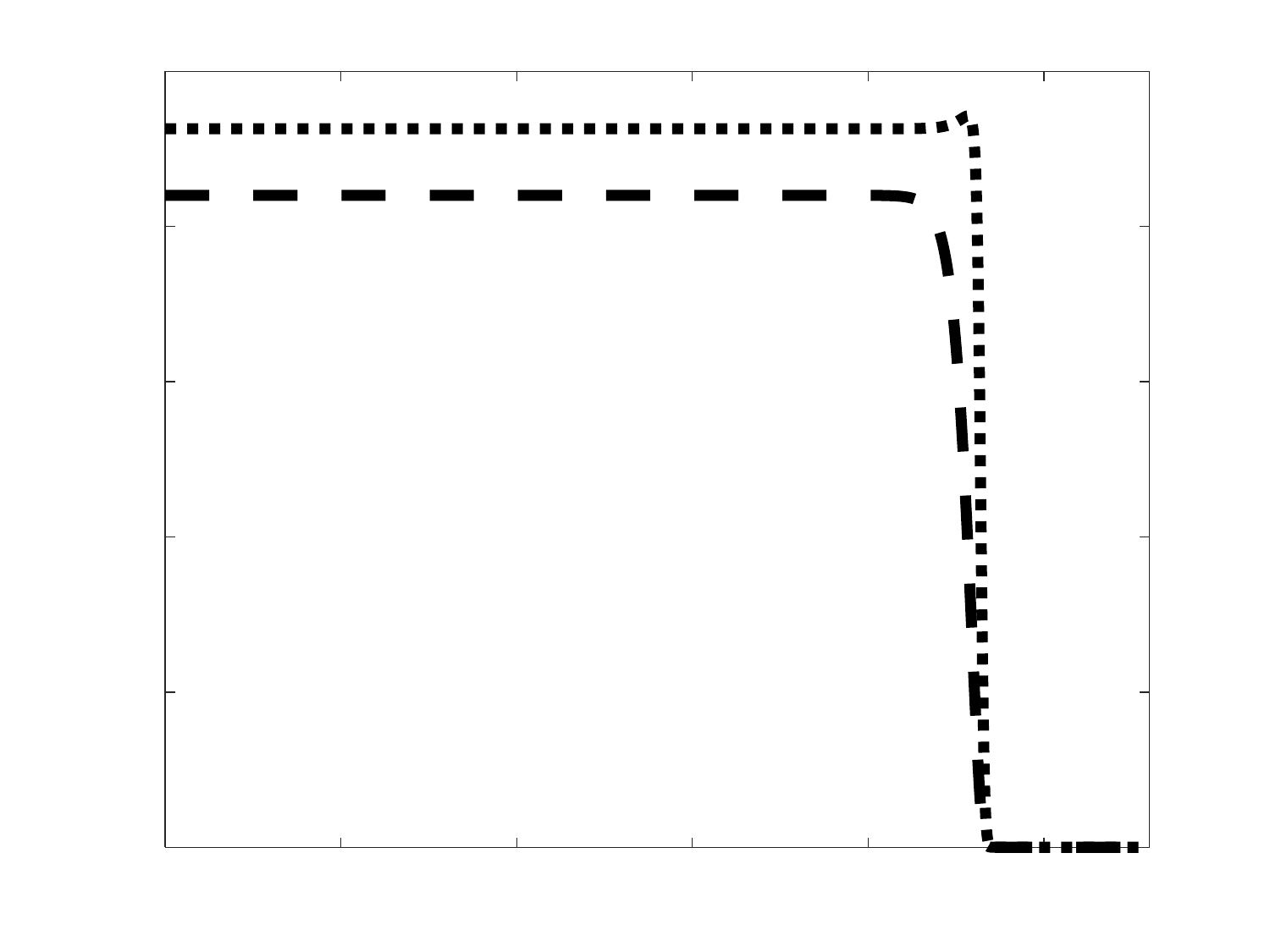}}\\
\subfloat[$t=0$]{\resizebox{.3\hsize}{!}{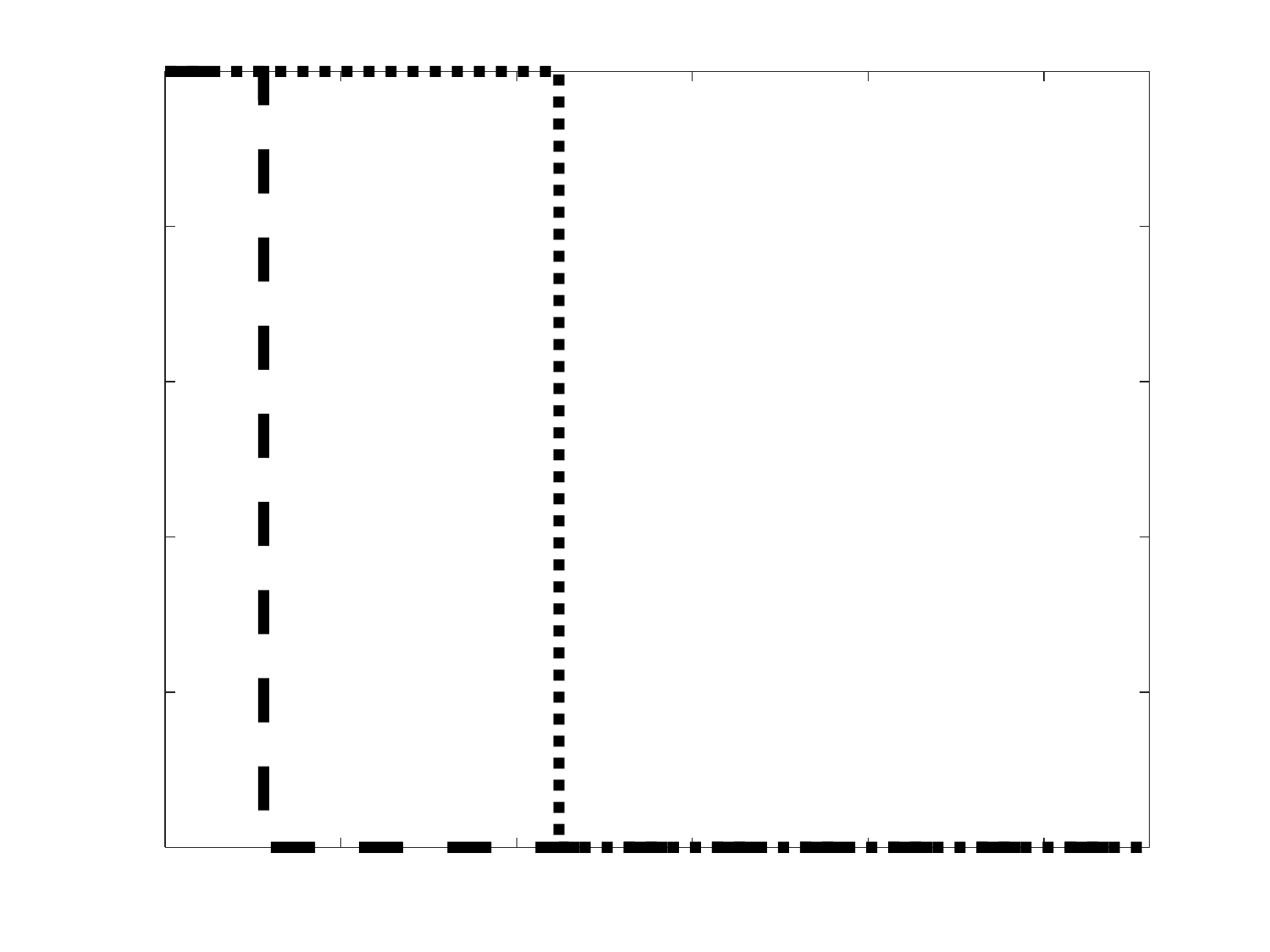}}\subfloat[$t=5$]{\resizebox{.3\hsize}{!}{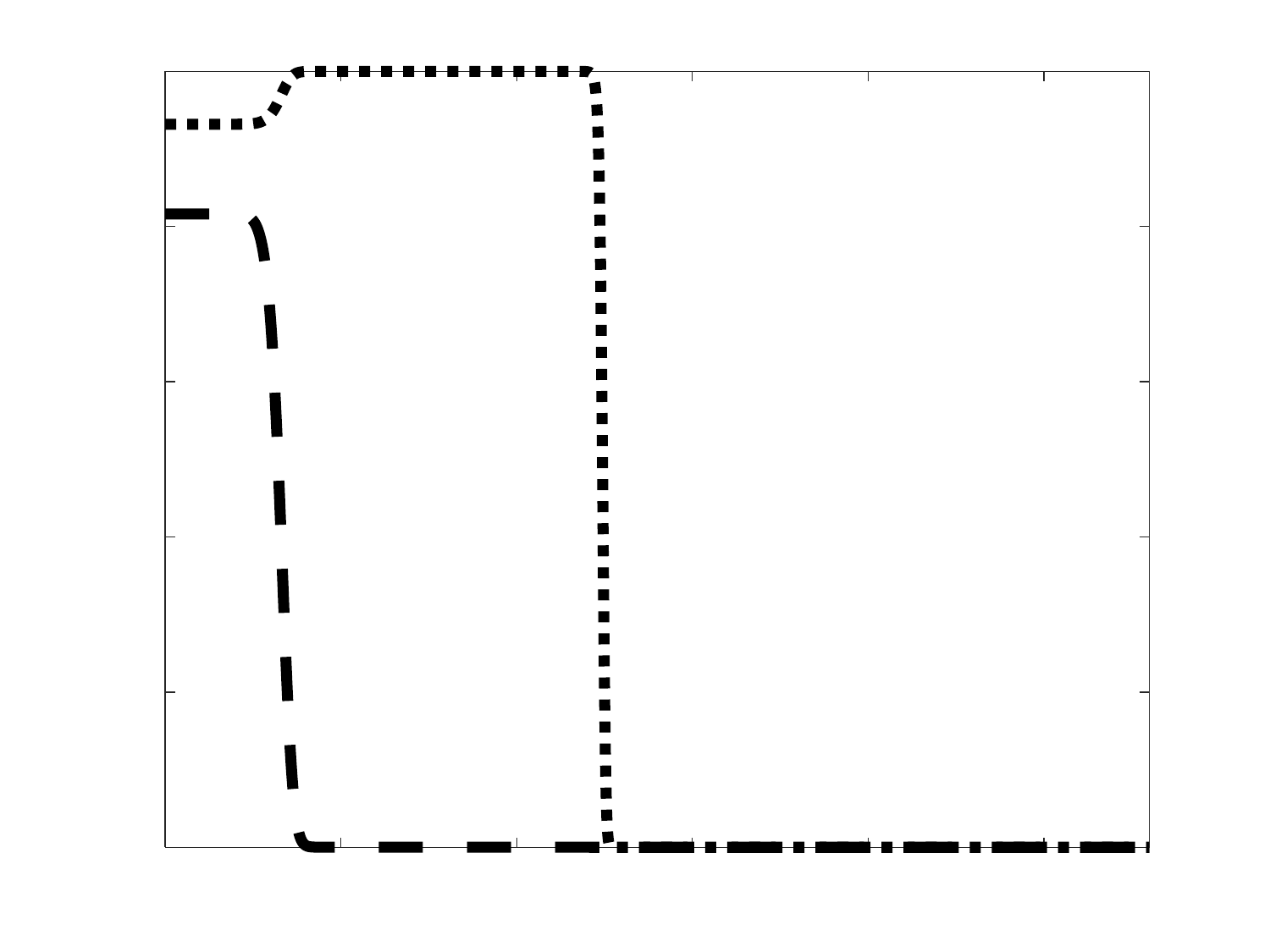}}\subfloat[$t=40$]{\resizebox{.3\hsize}{!}{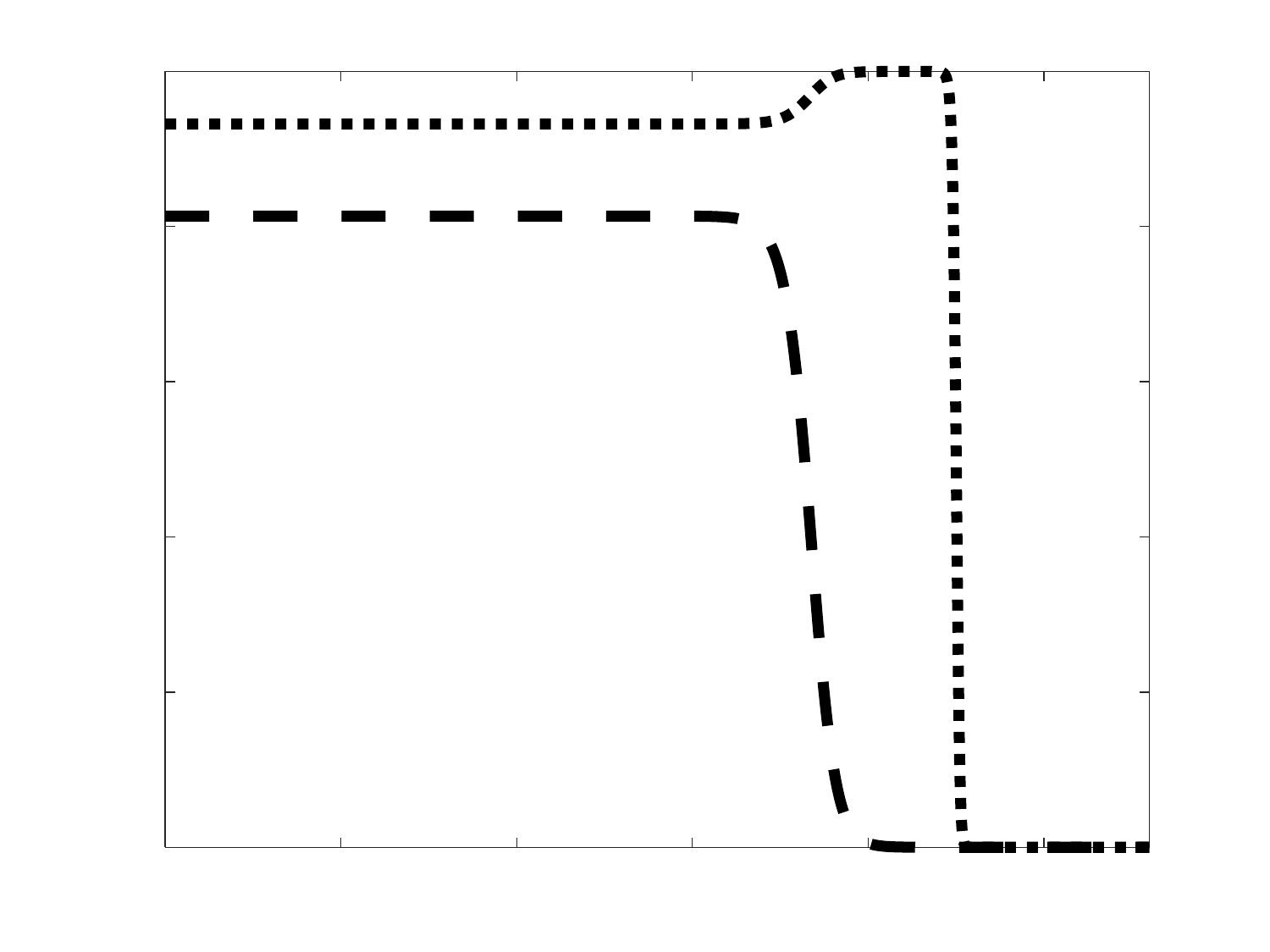}}\\
\subfloat[$t=0$]{\resizebox{.3\hsize}{!}{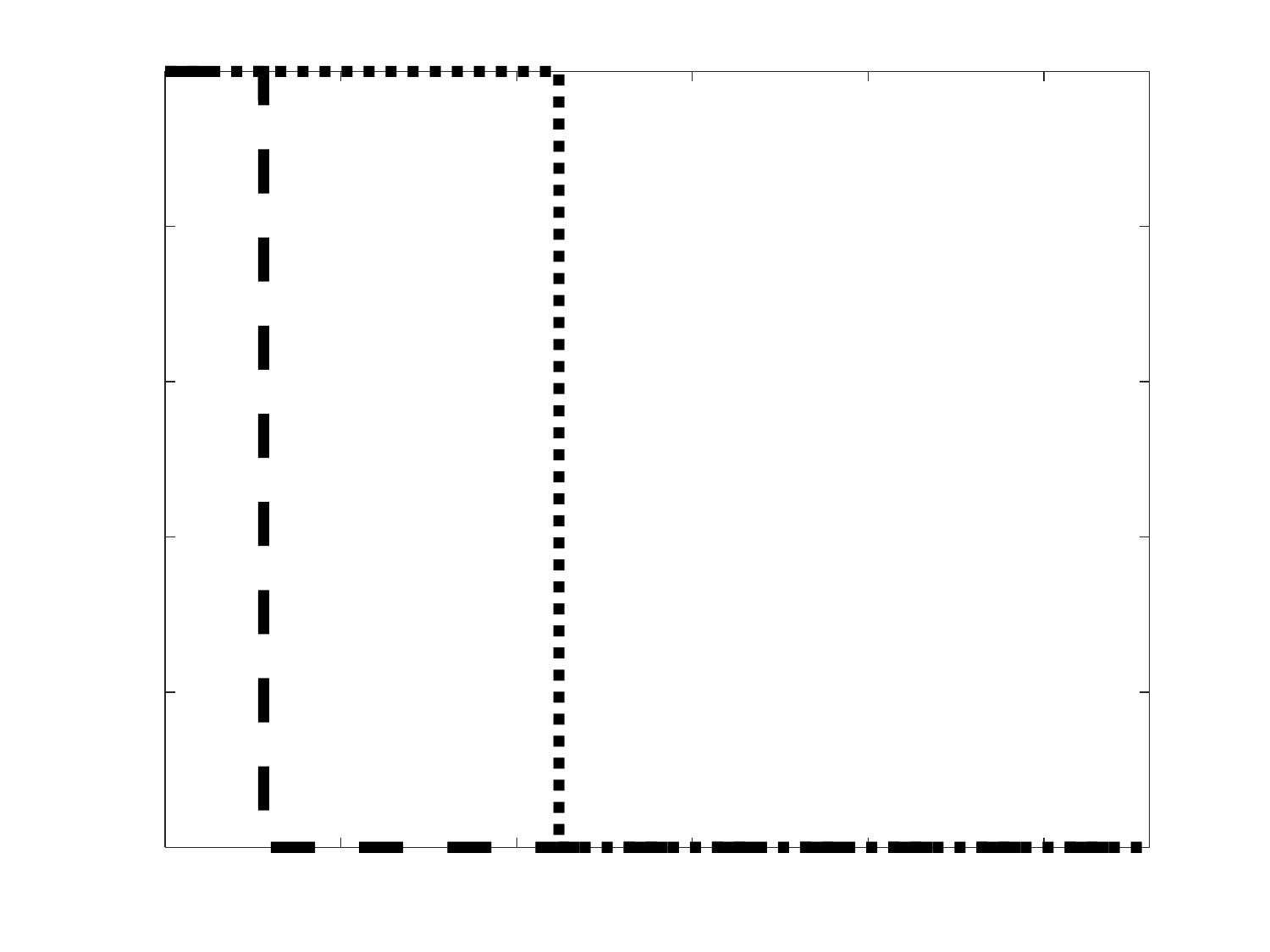}}\subfloat[$t=5$]{\resizebox{.3\hsize}{!}{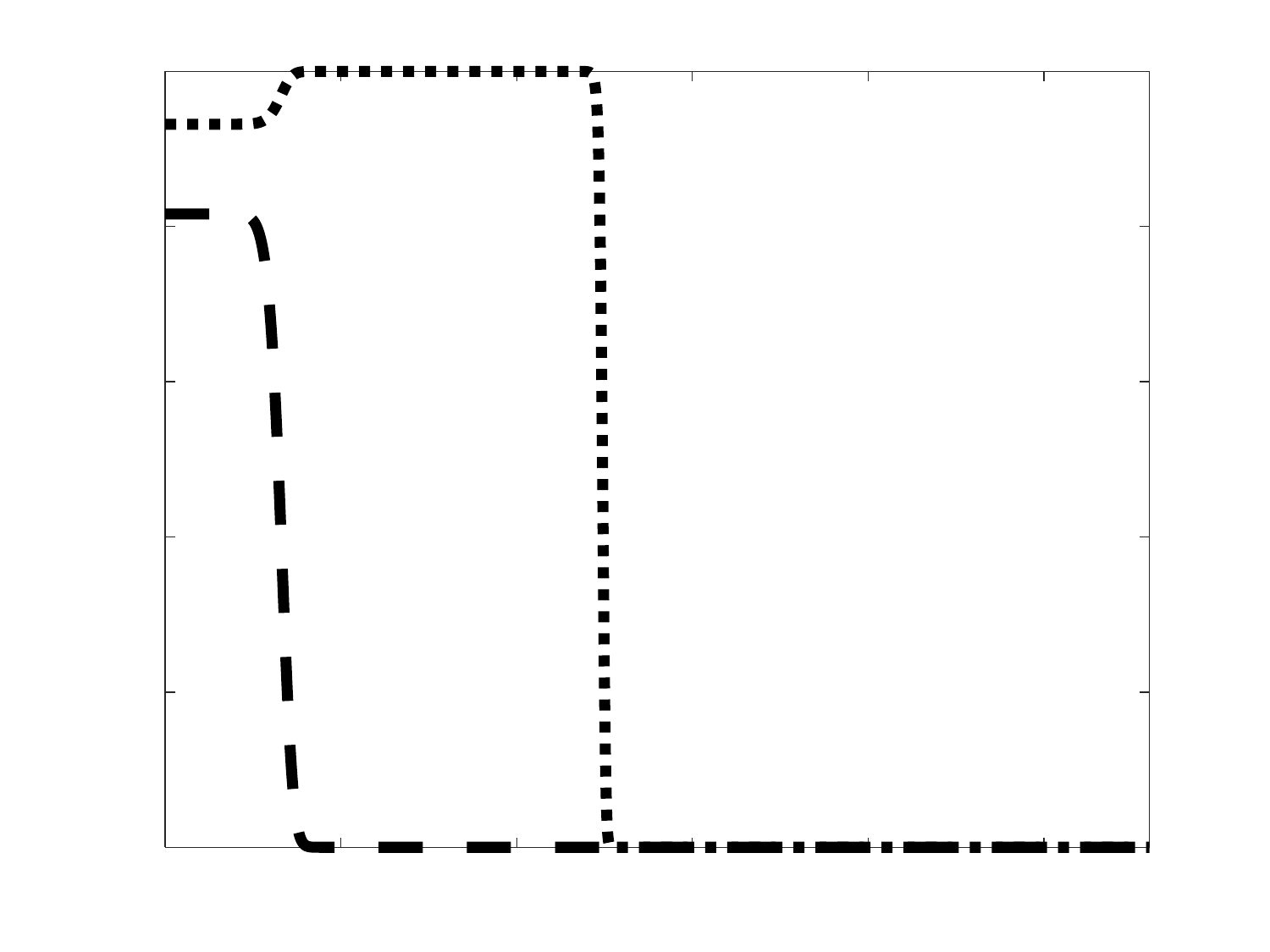}}\subfloat[$t=40$]{\resizebox{.3\hsize}{!}{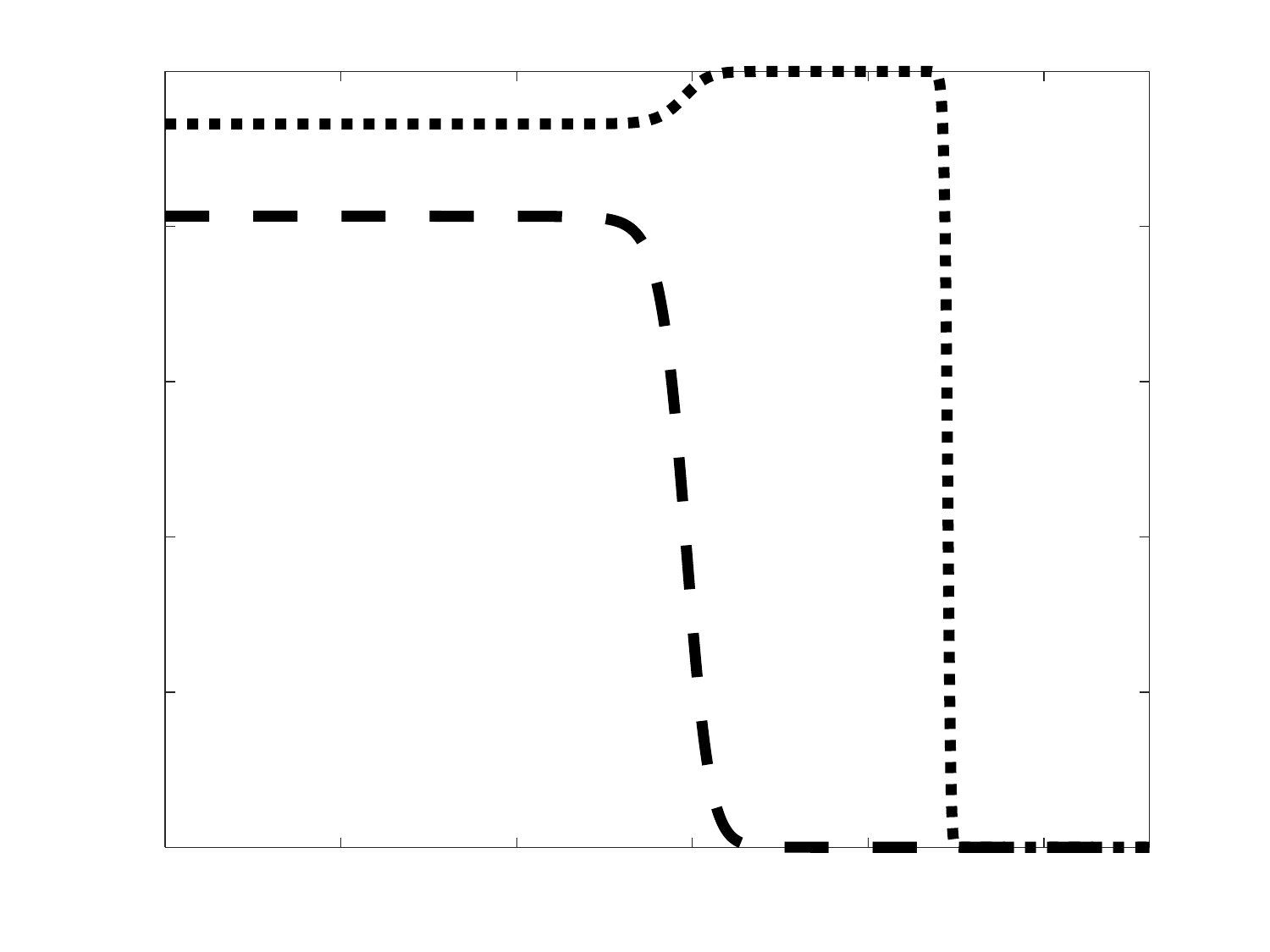}}

\caption{\label{fig:Numerics_monostable_spreading} Numerical simulations of
the monostable case with initial data corresponding to a competition\textendash diffusion
terrace ($u_{1}$ dashed line, $u_{2}$ dotted line, $x$ as horizontal
axis). \protect \\
Parameter values: $\mathbf{d}=\left(1,\frac{1}{3}\right)^{T}$, $\mathbf{r}=\left(1,6\right)^{T}$,
$\mathbf{m}=\mathbf{1}_{2,1}$, $c_{1,1}=1$, $c_{2,2}=6$, $c_{1,2}=0.2$,
$c_{2,1}=0.5$, $\eta=2.5\times10^{-1}$ on the first line, $\eta=2.5\times10^{-6}$
on the second line, $\eta=2.5\times10^{-11}$ on the third line.}
\end{figure}
\end{itemize}

\subsection{Why is \conjref{(H_7)} silent about the bistable case with $c_{\alpha_{1}\mathbf{e}_{1}\to\alpha_{2}\mathbf{e}_{2}}=0$?}

In this very special case, additional asymmetry assumptions on the
coefficients are necessary in order to exclude connections between
$\mathbf{0}$ and the saddle-point $\mathbf{v}_{m}$, as indicated
by the following immediate proposition, built on a counter-example
given in \subsecref{Delicate_back}.
\begin{prop}
Assume $\left(H_{7}\right)$, $\mathbf{d}=\mathbf{1}_{2,1}$, $\mathbf{r}=\mathbf{1}_{2,1}$,
$\mathbf{m}=\frac{1}{\sqrt{2}}\mathbf{1}_{2,1}$ and the existence
of $a\in\left(1,+\infty\right)$ such that
\[
\mathbf{C}=\left(\begin{matrix}1 & a\\
a & 1
\end{matrix}\right).
\]
 Then $\mathbf{v}_{m}=\frac{1}{\lambda_{PF}\left(\mathbf{C}\right)}\mathbf{1}_{2,1}\in\mathsf{K}^{++}$
is a saddle-point and, for all $\eta\geq0$ and all $c\geq2$, there
exists a unique $p_{c,\eta}\in\mathscr{C}^{2}\left(\mathbb{R}\right)$
such that
\[
\left\{ \begin{matrix}p_{c,\eta}\mathbf{1}_{2,1}\in\mathscr{P}_{c,\eta}\\
p_{c,\eta}\left(0\right)=\frac{1}{2\lambda_{PF}\left(\mathbf{C}\right)}\\
\lim\limits _{\xi\to-\infty}p_{c,\eta}\left(\xi\right)=\frac{1}{\lambda_{PF}\left(\mathbf{C}\right)}.
\end{matrix}\right.
\]
In particular, $\left(p_{c,\eta}\mathbf{1}_{2,1},c\right)$ connects
$\mathbf{0}$ to $\mathbf{v}_{m}$. 

Furthermore, 

\[
\left(c,\eta\right)\mapsto p_{c,\eta}\in\mathscr{C}\left([2,+\infty)\times[0,+\infty),\mathscr{W}^{2,\infty}\left(\mathbb{R},\mathbb{R}\right)\right).
\]
\end{prop}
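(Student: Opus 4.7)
The whole proposition rests on the observation that the symmetry $u_1 \leftrightarrow u_2$ of the coefficients combined with the ansatz $\mathbf{p}(\xi) = p(\xi)\mathbf{1}_{2,1}$ kills the mutation cross-term: since $\mathbf{M}\mathbf{1}_{2,1} = \mathbf{0}$, both components of $(TW[c])$ collapse into the single scalar Fisher\textendash KPP equation
\[
-p'' - cp' = p - (1+a)p^2 = p\bigl(1 - \lambda_{PF}(\mathbf{C})p\bigr),
\]
which, crucially, does not depend on $\eta$. The plan is therefore to reduce everything to this scalar equation and then invoke the classical theory of Fisher\textendash KPP traveling waves.

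First I would verify that $\mathbf{v}_m = \frac{1}{\lambda_{PF}(\mathbf{C})}\mathbf{1}_{2,1}$ is indeed the unique positive constant solution: $\mathbf{1}_{2,1}$ is the Perron\textendash Frobenius eigenvector of the symmetric matrix $\mathbf{C}$ with eigenvalue $1+a$, and $\mathbf{L}\mathbf{v}_m = \mathbf{v}_m$ while $(\mathbf{C}\mathbf{v}_m)\circ\mathbf{v}_m = \mathbf{v}_m$. For the saddle-point claim, I would compute
\[
J(\mathbf{v}_m) = \mathbf{L} - \mathbf{I}_2 - \tfrac{1}{1+a}\mathbf{C},
\]
observe that by symmetry its eigenvectors are $\mathbf{1}_{2,1}$ and $(1,-1)^T$, and compute the associated eigenvalues to be $-1$ and $\frac{a-1}{1+a} - \sqrt{2}\eta$. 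Since $a>1$, the latter is positive at least for $\eta$ small enough (in particular at $\eta=0$, which is the relevant regime for the counter-example of \subsecref{Delicate_back}), which yields the saddle-point structure with respect to $(E_{KPP}^0)_\eta$.

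Once the reduction is in place, the existence and uniqueness of $p_{c,\eta}$ with the stated properties is a direct application of classical scalar KPP results: for the equation above with intrinsic growth rate $r=1$ and carrying capacity $K = 1/\lambda_{PF}(\mathbf{C})$, traveling wave profiles connecting $0$ to $K$ exist if and only if $c \geq 2\sqrt{r} = 2$, are unique up to translation (for KPP-type nonlinearities on the positive half-line), and are decreasing; the pointwise normalization $p_{c,\eta}(0) = \frac{1}{2\lambda_{PF}(\mathbf{C})} = K/2$ fixes the translation uniquely. One then checks back that $\mathbf{p}_{c,\eta}(\xi) = p_{c,\eta}(\xi)\mathbf{1}_{2,1}$ is bounded and positive, satisfies $\mathbf{p}_{c,\eta}(-\infty) = \mathbf{v}_m$ and $\mathbf{p}_{c,\eta}(+\infty) = \mathbf{0}$, and therefore belongs to $\mathscr{P}_{c,\eta}$.

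The continuity statement is essentially free: since $p_{c,\eta}$ does not depend on $\eta$, the joint continuity in $(c,\eta)$ reduces to continuity in $c\in[2,+\infty)$ of the scalar Fisher\textendash KPP profile with fixed normalization, which is a standard consequence of elliptic estimates applied to the ODE $-p''-cp'=p(1-(1+a)p)$ together with uniqueness up to translation and a compactness argument. The only mildly delicate points I foresee are (i) getting uniform $\mathscr{W}^{2,\infty}$ bounds for a sequence $p_{c_n,\eta_n}$ in order to extract a limit and match it via the normalization, and (ii) being explicit about which version of the saddle-point property is meant when $\eta>0$ is not small, but neither of these is a genuine obstacle because the scalar reduction makes the problem entirely one-dimensional.
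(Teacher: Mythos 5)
Your proposal is correct and follows essentially the same route as the paper, which offers no separate proof for this proposition: it is declared ``immediate'' precisely because the symmetric ansatz $\mathbf{p}=p\,\mathbf{1}_{2,1}$ annihilates the mutation term (as already sketched for the counter-example in Subsection 5.1) and reduces $\left(TW\left[c\right]\right)$ to the $\eta$-independent scalar Fisher--KPP equation $-p''-cp'=p\left(1-\lambda_{PF}\left(\mathbf{C}\right)p\right)$, after which classical scalar theory gives existence, uniqueness under the normalization, and continuity in $c$. Your side remarks are also accurate, in particular the computation of the transverse eigenvalue $\frac{a-1}{1+a}-\sqrt{2}\eta$ at $\mathbf{v}_{m}$, which correctly identifies that the saddle-point assertion should be read with respect to the limiting competitive dynamics ($\eta=0$, or $\eta$ small), a point the paper leaves implicit.
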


\section*{Acknowledgments}

The author thanks Grégoire Nadin for the attention he paid to this
work and Elaine Crooks for fruitful discussions. He also thanks anonymous
referees and the associate editor for very valuable comments which
led to a much clearer manuscript.

\bibliographystyle{plain}
\bibliography{ref}

\end{document}